\renewcommand\eqref[1]{(\ref{#1})} 
\newtheorem{theorem}{Theorem}[section]
\newtheorem{corollary}[theorem]{Corollary}
\newtheorem{lemma}[theorem]{Lemma}
\newtheorem{proposition}[theorem]{Proposition}
\newtheorem{definition}[theorem]{Definition}
\theoremstyle{definition}
\newtheorem{remark}[theorem]{Remark}
\newcommand{\wt}[1]{\widetilde{#1}}
\newcommand\Rn{{\mathbb R}^n}
\renewcommand{\S}{\mathscr{S}}
\newcommand{\E}{\ensuremath{{\mathcal E}}}
\newcommand{\mb}[1]{\ensuremath{\mathbb{#1}}}
\newcommand{\N}{\mb{N}}
\newcommand{\R}{\mb{R}}
\newcommand{\C}{\mb{C}}
\renewcommand\N{{\mathbb N}_0}
\newcommand{\beq}{\begin{equation}}
\newcommand{\eeq}{\end{equation}}
\newcommand{\eps}{\varepsilon}
\newcommand{\lara}[1]{\langle #1 \rangle}
\newcommand{\esp}{\mathrm{e}}
\newcommand{\supp}{\mathrm{supp}}
\title[Inhomogeneous hyperbolic equations]
{Inhomogeneous wave equation with $t$-dependent singular coefficients}
\author[Marco Discacciati]{Marco Discacciati}
\address{
  Marco Discacciati:
  \endgraf
  Department of Mathematical Sciences
  \endgraf
  Loughborough University
  \endgraf
  Loughborough, Leicestershire, LE11 3TU
  \endgraf
  United Kingdom
  \endgraf
  {\it E-mail address} {\rm m.discacciati@lboro.ac.uk}
  }
\author[Claudia Garetto]{Claudia Garetto}
\address{
  Claudia Garetto:
  \endgraf
  Department of Mathematical Sciences
  \endgraf
  Loughborough University
  \endgraf
  Loughborough, Leicestershire, LE11 3TU
  \endgraf
  United Kingdom
  \endgraf
  {\it E-mail address} {\rm c.garetto@lboro.ac.uk}
  }
\author[Costas Loizou]{Costas Loizou}
\address{
  Costas Loizou:
  \endgraf
  Department of Mathematical Sciences
  \endgraf
  Loughborough University
  \endgraf
  Loughborough, Leicestershire, LE11 3TU
  \endgraf
  United Kingdom
  \endgraf  {\it E-mail address} {\rm c.loizou@lboro.ac.uk}
  }
\thanks{The second author was supported by the
EPSRC grant  EP/V005529/1}
\date{}
\subjclass[2010]{Primary 35L05: 35L10; Secondary 35D99;}
\keywords{Hyperbolic equations, very weak solutions, regularisation}
\begin{document}

\maketitle

\begin{abstract}
This paper is devoted to the study of the inhomogeneous wave equation with singular (less than continuous) time dependent coefficients. 
Particular attention is given to the role of the lower order terms and suitable Levi conditions are formulated in order to obtain a very weak solution as introduced in \cite{GR:14}.
Very weak solutions for this kind of equations are also investigated from a numerical point of view in two toy models: the wave equation with a Heaviside function and a delta distribution, respectively, as coefficient in its principal part.
\end{abstract}

\section{Introduction}
In this paper we want to study the well-posedness of the Cauchy problem on $[0,T]\times\R^n$ for wave equations of the type
\beq
\label{intro_eq_1}
\partial_t^2u(t,x)-\sum_{i=1}^n a_i(t)
\partial_{x_i}^2u(t,x)+l(t,\partial_t,\partial_x)u(t,x)=f(t,x),
\eeq
where
\[
l(t,\partial_t,\partial_x)=\sum_{i=1}^n c_i(t)\partial_{x_i}+d(t)\partial_t+e(t).
\]
We assume that the coefficients of the principal part are real and that $a_i(t)\ge 0$ for all $t\in[0,T]$ and $i=1,\dots,n$. It follows that this equation is hyperbolic
but not necessarily strictly hyperbolic. This equation has been widely investigated when the coefficients are regular (for instance, continuous, $C^k$ or $C^\infty$ \cite{ColKi:02, CDS}) but this is not the case when the coefficients are discontinuous or more in general singular (distributional). Note that, in view of
the famous Schwartz impossibility result on multiplication of distributions
\cite{Schwartz:impossibility-1954}, a fundamental question arises immediately: how to interpret the equation
\eqref{intro_eq_1} when both coefficients and $u$ are distributions? This question has been answered in \cite{GR:14} by introducing the notion of a \emph{very weak solution}.
However, the results obtained in \cite{GR:14} hold only for homogeneous hyperbolic equations ($l=0$ and $f=0$). In this paper we want to complete the study started in \cite{GR:14} by formulating suitable Levi conditions on the lower order terms adapted to the case of singular coefficients.

Hyperbolic equations model different phenomena in physics from propagation of waves into a multi-layered medium to refraction in crystals. From a purely analytical point of view they present very interesting properties and their study requires a careful combination of different techniques, from linear algebra (symmetrisation, quasi-symmetrisation, diagonalisation, etc.) to functional analysis and operator theory (energy estimates, pseudo-differential and Fourier-integral operators, microlocal analysis, etc.). Every hyperbolic equation of order $m$ is associated to its characteristic polynomial which has exactly $m$ real roots. When they are distinct the equation is called strictly hyperbolic, when the roots have multiplicities the equation is called weakly hyperbolic. While strictly hyperbolic have been widely investigated by the mathematical community (see \cite{CDS, Hoer:71} and references therein) the same is not true for weakly hyperbolic equations. For this last class of equations the well-posedness of the corresponding Cauchy problem heavily depends on the regularity of the coefficients, on the assumptions on the lower order terms (Levi conditions) and the choice of the function space where to work. Usually, one obtains results of Gevrey well-posedness when $C^\infty$ well-posedness cannot be achieved. For a non exhausting literature overview on the subject we refer the reader to \cite{ColKi:02, CS, CDS, G:20, GarJ, GR:11, GR:12, GarRuz:3, GarRuz:7, GJR1, GJR2, Hoer:71, Hord, JT, KY:06, KR2, KS, O70, PP}. In this paper we address a problem which presents two level of difficulty: its coefficients are singular (so the majority of the known analytical methods cannot be directly applied) and it has multiplicities. 

In the sequel we summarise the state of the art on the Cauchy problem 
\beq
\label{intro_eq}
\begin{split}
\partial_t^2u(t,x)-\sum_{i=1}^n a_i(t)
\partial_{x_i}^2u(t,x)+l(t,\partial_t,\partial_x)u(t,x)&=f(t,x),\\
u(0,x)&=g_0(x),\\
\partial_t u(0,x)&=g_1(x).
\end{split}
\eeq
\vspace{0.2cm}

We rewrite the equation in \eqref{intro_eq}, using the notation $D_t=-{\rm i}\partial_t$ and $D_{x_i}=-{\rm i}\partial_{x_i}$ to get 
\beq
\label{intro_eq_Dt}
\begin{split}
D_t^2u(t,x)-\sum_{i=1}^n a_i(t)
D_{x_i}^2u(t,x)-l(t,{\rm i}D_t,{\rm i}D_x)u(t,x)&=-f(t,x),\\
u(0,x)&=g_0(x),\\
D_t u(0,x)&=-{\rm i}g_1(x).
\end{split}
\eeq
Note that this equation has characteristic polynomial
\[
P(t,\tau,\xi)=\tau^2-\sum_{i=1}^n a_i(t)\xi_i^2,
\]
with real roots 
\begin{align} \label{eigenvalues_regular}
\lambda_{1,2} = \pm \sqrt{\sum_{i=1}^n a_i(t)\xi_i^2},
\end{align}
which coincide, for $\xi\neq 0$, when 
\[
a_i(t)=0 \quad \forall i=1,\,\dots,\,n.
\]
Now we adopt the notation introduced in \cite{KS} and \cite{GR:12} to denote by  $A_{2-j+1}$  the operator of order $2-j+1$ and by $A_{(2-j+1)}$ its principal part, for $j=1,\,2$. We rewrite \eqref{intro_eq_Dt} as

\beq
\label{intro_eq_Dt_Aj}
\begin{split}
D_t^2u(t,x) + A_{1}(t,D_x)D_t u(t,x)+ A_{2}(t,D_x)u(t,x)&=-f(t,x),\\
u(0,x)&=g_0(x),\\
D_t u(0,x)&=-{\rm i}g_1(x),
\end{split}
\eeq

\begin{align*}
&\text{where } A_{1}=- {\rm i}d(t) \text{ and hence }A_{(1)}=0 \\
&\text{and } A_{2}=-\sum_{i=1}^n a_i(t)D_{x_i}^2 -{\rm i}\sum_{i=1}^n c_i(t)D_{x_i}-e(t) \text{ and hence }A_{(2)}= -\sum_{i=1}^n a_i(t)D_{x_i}^2.
\end{align*}

In \cite{GR:12} it has been proven that when the equation coefficients are regular enough, namely of class $C^k$ with $k\ge 2$, and $f$ is identically zero then the Cauchy problem above is well-posed in Gevrey classes, provided that {\bf two sets of hypotheses} are fulfilled: on the {\bf roots} $\lambda_j$ and on the {\bf lower order terms}. 

In detail, we assume that
\begin{itemize}
\item the roots $\lambda_j(t,\xi)$ fulfil the condition introduced by Kinoshita and Spagnolo in  \cite{KS}:
\begin{align}
\label{KS_condition}
\exists M>0:\quad
\lambda_1(t,\xi)^2+\lambda_2(t,\xi)^2\le M(\lambda_1(t,\xi)-\lambda_2(t,\xi))^2,\quad t\in[0,T],  \textrm{ for all }\xi.
\end{align}
\item and the lower order terms satisfy the Levi conditions stated in \cite{GR:12}: for all $j=1,\,2$ there exists $C_j>0$ such that
\begin{equation}\label{EQ:lot2}
\begin{split}
\left|(A_{2}-A_{(2)})(t,\xi)\right|^2 &\leq
C_1 (\lambda^2_1(t,\xi)+\lambda_2^2(t,\xi)),\\
\left|(A_{1}-A_{(1)})(t,\xi)\right|^2 &\leq
C_2,
\end{split}
\end{equation} 
for all $t\in [0,T]$ and for $\xi$ away from $0$ (i.e., for $|\xi|\ge R$ for some $R>0$).
\end{itemize}
Note that the Kinoshita-Spagnolo condition on the roots can be stated more in general for any hyperbolic equation of order $m\ge 2$ and it is trivially fulfilled when $m=2$. Indeed, for $\lambda_{1,2}$ as in \eqref{eigenvalues_regular} we have 
\begin{align*}
\exists M>0:\quad
&2\sum_{i=1}^n a_i(t)\xi_i^2 \le M \left( 4\sum_{i=1}^n a_i(t)\xi_i^2\right),
\end{align*}
which is satisfied for $M=1$ since $a_i(t)\ge 0$ for all $t\in[0,T]$ and $i=1,\,\dots,\, n$. The Levi conditions \eqref{EQ:lot2} can be reformulated as follows: $\exists C_1,\,C_2>0$ such that
\begin{align}
\label{EQ:lot2_general}
\left|{\rm i}\sum_{i=1}^n c_i(t)\xi_{i}+e(t) \right|^2 &\le C_1 \left(  2\sum_{i=1}^n a_i(t)\xi_i^2 \right), \\ \nonumber
|d(t)|^2&\le C_2,
\end{align}
for $t\in [0,T]$ and $\xi$ away from $0$. The condition on $d$ is trivial if this coefficient is at least continuous. Summarising, making use of the notations introduced so far, we can state the well-posedness result proven in \cite{GR:12}.
\begin{theorem}[Theorem 2 in \cite{GR:12}]
\label{theo_GR}
Let
\beq
\begin{split}
D_t^2u(t,x) + A_{1}(t,D_x)D_t u(t,x)+ A_{2}(t,D_x)u(t,x)&=0,\\
u(0,x)&=g_0(x),\\
D_t u(0,x)&=-{\rm i}g_1(x),
\end{split}
\eeq
where the operators $A_1$ and $A_2$ are defined in \eqref{intro_eq_Dt_Aj}. Assume that the equation coefficients are continuous with the ones of the principal part of class $C^k$ with $k\ge 2$. Let $g_0,\,g_1\in \gamma^s(\R^n)$. If there exists $C_1>0$ such that 
\[
\left|(A_{2}-A_{(2)})(t,\xi)\right|^2 \leq
C_1 (\lambda^2_1(t,\xi)+\lambda_2^2(t,\xi)),
\]
for all $t\in[0,T]$ and for $\xi$ away from $0$, then the Cauchy problem above has a unique solution $u\in C^2([0,T]; \gamma^s(\R^n))$ provided that 
\[
1\le s\le 1+\frac{k}{2}.
\]

\end{theorem}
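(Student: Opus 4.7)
My plan is to reduce the Cauchy problem to a first-order $2\times 2$ system by partial Fourier transform in $x$, construct a Kinoshita--Spagnolo quasi-symmetriser to derive an energy estimate with a frequency loss of the form $\exp(C\lara{\xi}^{1/s_0})$ with $s_0=1+k/2$, and then convert this pointwise-in-$\xi$ estimate into a $\gamma^s$-estimate via the Paley--Wiener characterisation of Gevrey classes, valid for all $1\le s\le s_0$.

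First, applying the partial Fourier transform in $x$ turns the equation into the parameter-dependent ODE
\[
D_t^2\hat u(t,\xi)+A_1(t,\xi)D_t\hat u(t,\xi)+A_2(t,\xi)\hat u(t,\xi)=0,
\]
which I rewrite as $D_tV=A(t,\xi)V$ with $V=(\lara{\xi}\hat u,D_t\hat u)^T$ and $A(t,\xi)$ a $2\times 2$ matrix whose principal-part eigenvalues are $\lambda_{1,2}$ from \eqref{eigenvalues_regular}. The Levi condition \eqref{EQ:lot2} is precisely what makes the non-principal entries of $A$ dominated by $(\lambda_1^2+\lambda_2^2)^{1/2}$, so that they can be absorbed into the principal-part analysis rather than degrading it.

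Second, following \cite{KS}, I construct a symmetric positive semidefinite quasi-symmetriser $Q_\varepsilon(t,\xi)$ for $A(t,\xi)$, depending on a parameter $\varepsilon\in(0,1]$. The KS condition \eqref{KS_condition} is trivially satisfied with $M=1$ since $a_i\ge 0$, so the algebraic step produces $Q_\varepsilon$ with a lower bound $\lara{Q_\varepsilon V,V}\gtrsim\varepsilon^{p}|V|^2$ and a commutator defect $\mathrm{Im}\lara{AV,Q_\varepsilon V}\lesssim\varepsilon\lara{\xi}\lara{Q_\varepsilon V,V}$; a $t$-regularisation of $Q_\varepsilon$ by a mollifier of width $\varepsilon$ uses the $C^k$-regularity of the principal-part coefficients to bound $|\partial_t Q_\varepsilon|$ by a negative power of $\varepsilon$, at the cost of an $O(\varepsilon^k)$ perturbation. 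Setting $E_\varepsilon(t,\xi)=\lara{Q_\varepsilon V,V}$ and combining these estimates yields a differential inequality
\[
\partial_t E_\varepsilon(t,\xi)\le C\bigl(\varepsilon\lara{\xi}+\varepsilon^{-q}\bigr)E_\varepsilon(t,\xi),
\]
and choosing $\varepsilon$ as an appropriate negative power of $\lara{\xi}$ produces, via Gr\"onwall,
\[
E_\varepsilon(t,\xi)\lesssim \lara{\xi}^{N}\bigl(|\hat g_0(\xi)|^2+|\hat g_1(\xi)|^2\bigr)\exp\bigl(Ct\lara{\xi}^{1/s_0}\bigr),
\]
the threshold $s_0=1+k/2$ emerging from the balance between the KS defect $\varepsilon\lara{\xi}$ and the mollification cost.

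Finally, since $g_0,g_1\in\gamma^s(\R^n)$ with $1\le s\le s_0$, the transforms $\hat g_0,\hat g_1$ decay like $\exp(-\delta\lara{\xi}^{1/s})$, which absorbs the exponential growth above on $t\in[0,T]$ (after possibly shrinking $\delta$). The Paley--Wiener characterisation of $\gamma^s$ then yields $u(t,\cdot)\in\gamma^s(\R^n)$ uniformly in $t$, and using the equation itself to control $D_t\hat u$ and $D_t^2\hat u$ upgrades this to $u\in C^2([0,T];\gamma^s(\R^n))$; uniqueness follows from the same energy estimate applied to the difference of two solutions. The main obstacle is the precise book-keeping of the exponents of $\varepsilon$ in the quasi-symmetriser bounds: the Levi condition on $A_2-A_{(2)}$ is tuned exactly so that the non-principal contributions fit inside the $\varepsilon\lara{\xi}E_\varepsilon$ term rather than producing a worse contribution that would degrade $s_0$, while the $C^k$-regularity of the principal part is what caps $s$ at $1+k/2$.
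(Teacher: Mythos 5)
Your high-level strategy matches the cited proof (from [GR:12], following [KS]): reduce to a $2\times 2$ first-order ODE system in Fourier variables, introduce the Kinoshita--Spagnolo quasi-symmetriser $Q^{(2)}_\delta$, run an energy estimate, balance the commutator loss $\delta\lara{\xi}$ against a $\delta^{-2/k}$ term coming from $\partial_t Q_\delta$, and close via Gr\"onwall and the Fourier characterisation of $\gamma^s$. The Levi condition and the bound $1\le s\le 1+k/2$ appear for exactly the reasons you describe.

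However, there is a genuine gap in the crucial step where the $\delta^{-2/k}$ bound is obtained. You propose to bound the $\partial_t Q$ term by mollifying $Q_\varepsilon$ in time, claiming this yields $|\partial_t Q_\varepsilon|\lesssim \varepsilon^{-q}$ at the cost of an $O(\varepsilon^k)$ error. This does not work here: since the principal coefficients are $C^k$ with $k\ge 2$, the entries of $Q_\delta$ (which are polynomials in the $a_i$, not in the $\lambda_j$) are already $C^k$, so $\partial_t Q_\delta$ is uniformly bounded and mollification produces no negative power of the width. The naive pointwise estimate $|(\partial_t Q_\delta V,V)|\lesssim |V|^2\lesssim \delta^{-2}(Q_\delta V,V)$ then only gives the balance $\delta^{-2}=\delta\lara{\xi}$, i.e.\ $s_0=3/2$, independent of $k$, which is strictly weaker than $1+k/2$ as soon as $k>1$. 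The actual mechanism in [KS]/[GR:12] is a Glaeser-type \emph{integral} inequality — Lemma~\ref{lem_new} in the appendix — which, for a nearly diagonal family of nonnegative Hermitian matrices of class $C^k$ in $t$, controls
\[
\int_{0}^T \frac{|(\partial_t Q^{(2)}_\delta V,V)|}{(Q^{(2)}_\delta V,V)^{1-1/k}\,|V|^{2/k}}\, dt
\]
by $\|Q^{(2)}_\delta\|^{1/k}_{C^k}$. Combined with the coercivity lower bound $(Q^{(2)}_\delta V,V)\ge c\,\delta^2|V|^2$, this yields $\int_0^T K_\delta(t,\xi)\,dt\le C\delta^{-2/k}$, which is what produces the balance $\delta^{-2/k}=\delta\lara{\xi}$ and hence the sharp threshold $s_0=1+k/2$. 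Your argument needs to invoke this lemma (or an equivalent Glaeser-type bound for the nonnegative form $(Q_\delta V,V)$) rather than a mollification of $Q_\delta$; as written it falls short of the stated range of $s$.
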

It is our aim in this paper to generalise Theorem \ref{theo_GR}  in two different ways:
\begin{enumerate}
\item by introducing a non-identically zero right-hand side $f$;
\item by lowering the regularity of the coefficients.
\end{enumerate}
This will involve a mix of techniques from quasi-symmetriser to regularisations and very weak solutions. We will first extend Theorem \ref{theo_GR} to $f\neq 0$ and we will then pass to consider non-regular coefficients. For a better understanding of very weak solutions and regularisation methods we refer the reader to \cite{GR:14} and to the recent work by Ruzhansky and collaborators in \cite{ART:19, ARST:20, ARST:20b, ARST:20c, RT, RY}.
 
The paper is organised as follows: In Section 2 we introduce the right-hand side $f$ and we prove well-posedness of the Cauchy problem for the weakly hyperbolic inhomogeneous wave equation with smooth coefficients. In Section 3 we pass to consider singular coefficients and we summarise the elements of the theory of very weak solutions that are needed for this paper. Levi conditions on the lower order terms are formulated in Section 4 as a generalisation of the ones introduced in \cite{GR:12} for regular coefficients. Our study of the Cauchy problem for the inhomogeneous wave equation with singular coefficients is organised in two sections: Section 5 where we assume that right-hand side $f$ and initial data are Gevrey functions (Case 1) and Section 6 where the right-hand side and initial data are compactly supported distributions (Case 2). Uniqueness in the very weak sense and consistency with classical results are discussed in Section7. Finally, in Section 8 we focus on two toy models: the wave equation in dimension 1 with a Heaviside function and a delta distribution, respectively, as a coefficient in the principal part. We recall the results known for these equations (with a particular focus on Oleinik's analysis of second order hyperbolic equations in \cite{O70}) and we prove consistency with our very weak approach. Note that our result on the wave equation with a Heaviside function as a coefficient extends a previous result in \cite{DHO13} known only when the jump is between two positive constants. Here we allow one of the constants to be $0$. The analysis of the toy models is supported by some numerical experiments where we investigate the limiting behaviour of very weak solutions and we show that it is independent of the employed regularisation method. The paper ends with an appendix where we collect the needed background on the quasi-symmetriser employed throughout the paper.

\section{Introduction of the right-hand side $f$}
We begin by working on the case of regular coefficients as in \cite{KS} and \cite{GR:12} and we show that the well-posedness result obtained in these papers holds also in presence of a right-hand side $f$ which is not identically zero.
This requires to employ the quasi-symmetriser associated to the equation whose general definition and main properties for matrices in Sylvester form of any size are collected in the appendix at the end of the paper. For the sake of simplicity, we write here only the quasi-symmetriser employed for equations of the second order and we refer the reader to the appendix for an exhaustive survey on the topic. Making use of the transformation
\[	
u_{1}=\lara{D_x}u, \quad u_{2}=D_t u,
\]
where  $\lara{D_x}$ is the pseudo-differential operator with symbol $\lara{\xi}=(1+|\xi|^2)^{\frac{1}{2}}$, we can transform the second order equation 
\[
D_t^2u(t,x)-\sum_{i=1}^n a_i(t)
D_{x_i}^2u(t,x)-l(t,{\rm i}D_t,{\rm i}D_x)u(t,x)=-f(t,x)
\]
or equivalently 
\[
D_t^2u(t,x) + A_{1}(t,D_x)D_t u(t,x)+ A_{2}(t,D_x)u(t,x)=-f(t,x)
\]
into the first order system
\[
D_t\left(
                             \begin{array}{c}
                               u_{1} \\                          
                               u_{2} \\
                             \end{array}
                           \right)
= M
  \left(\begin{array}{c}
                               u_{1} \\                             
                               u_{2} \\
                             \end{array}
                           \right)
                           -
\left(\begin{array}{c}
                               0 \\                               
                               f \\
                             \end{array}
                           \right),
\]
where
\[
\begin{split}
M &=\left(
    \begin{array}{ccccc}
      0 & \lara{D_x} \\
    -A_2(t,D_x)\lara{D_x}^{-1} & -A_1(t,D_x) \\
    \end{array}
  \right)\\
  &=\left(
    \begin{array}{ccccc}
      0 & \lara{D_x} \\
      \left( \sum_{i=1}^n a_{i}D_{x_i}^2 + {\rm i}\sum_{i=1}^n c_{i} D_{x_i} + e \right)\lara{D_x}^{-1} & {\rm i}d \\
    \end{array}
  \right).
  \end{split}
\]
The matrix above can be written as $M=\mathbb{A}_{1}+B$ with
\[
\mathbb{A}_{1}=\left(
    \begin{array}{ccccc}
      0 & \lara{D_x} \\
      \sum_{i=1}^n a_{i}D_{x_i}^2\lara{D_x}^{-1} & 0 \\
    \end{array}
  \right)
  \]
  and 
  \[
B=\left(
    \begin{array}{ccccc}
      0 & 0 \\
      \left({\rm i}\sum_{i=1}^n c_{i} D_{x_i} +e \right)\lara{D_x}^{-1} & {\rm i}d \\
    \end{array}
  \right).  
\]
Analogously, the initial conditions 
\[
u(0,x)=g_0(x),  \quad  D_t u(0,x)=-{\rm i}g_1(x),
\]
are transformed into 
\[
u_1(0,x)=\lara{D_x}g_0(x), \quad u_2(0,x)=-{\rm i}g_1(x).
\]
Setting $U=(u_1, u_2)^T$  and $F=(0,-f)^T$ we will therefore concentrate on the following reformulation of the Cauchy problem \eqref{intro_eq_Dt_Aj}:
\[
\begin{split}
D_t U&=\mathbb{A}_{1}(t,D_x)U+B(t,D_x)U+F,\\
U(0)&=(\lara{D_x}g_0, -{\rm i}g_1)^T.
\end{split}
\]
By Fourier transforming in $x$ (see \cite{GR:12}) we can equivalently study the system of ODEs
\beq
\label{system_new_V}
\begin{split}
D_t V&=\mathbb{A}_1(t,\xi)V+B(t,\xi)V+\widehat{F},\\
V|_{t=0}(\xi)&=V_0(\xi),
\end{split}
\eeq
where $V$ is the $2$-column with entries $v_j=\widehat{u}_j$ for $j=1,\,2$ and $V_0$ is the $2$-column with entries
$v_{0,1}=\lara{\xi}\widehat{g_0}$ and $v_{0,2}=-{\rm i}\widehat{g_1}$.

\subsection{Quasi-symmetriser for a second order hyperbolic equation}
In this subsection we investigate the matrix $\mathbb{A}_{1}(t,D_x)$. This is a matrix of first order pseudo-differential operators with symbol 
\begin{align*}
\mathbb{A}_1(t,\xi)&=\left(
    \begin{array}{ccccc}
      0 & 1\\
    -A_{(2)}(t,\xi)\lara{\xi}^{-2} & -A_{(1)}(t,\xi)\lara{\xi}^{-1}\\
    \end{array}
  \right)\lara{\xi}\\
  &=\left(
    \begin{array}{ccccc}
      0 & 1\\
      \sum_{i=1}^n a_{i}(t){\xi_i}^2\lara{\xi}^{-2} & 0 \\
    \end{array}
  \right)\lara{\xi}.
\end{align*}
The matrix 
\[A(t,\xi)\coloneqq\lara{\xi}^{-1}\mathbb{A}_{1}(t,\xi)=
\left(
    \begin{array}{ccccc}
      0 & 1\\
      \sum_{i=1}^n a_{i}(t){\xi_i}^2\lara{\xi}^{-2} & 0 \\
    \end{array}
  \right)
\]
is in Sylvester form and has eigenvalues 
\[ \tilde{\lambda}_{1}(t,\xi)=\lambda_1(t,\xi)\lara{\xi}^{-1} \text{ and }  \tilde{\lambda}_{2}(t,\xi)=\lambda_2(t,\xi)\lara{\xi}^{-1}.\]
 This matrix has a standard quasi-symmetriser (see the appendix at the end of the paper) which is an Hermitian matrix defined by 
\[
Q^{(2)}_{\delta}(\tilde{\lambda}_{1},\tilde{\lambda}_{2})=\left(
    \begin{array}{cc}
      \tilde{\lambda}_{1}^2+\tilde{\lambda}_{2}^2 & -(\tilde{\lambda}_{1}+\tilde{\lambda}_{2})\\
      -(\tilde{\lambda}_{1}+\tilde{\lambda}_{2}) & 2 \\
           \end{array}
  \right) + 2\delta^2 \left(
    \begin{array}{cc}
      1 & 0\\
      0 & 0 \\
           \end{array}
  \right),
\]
where $\delta\in (0,1]$ is a parameter. Note that for $2\times 2$ matrices $M_1$ and $M_2$ the notation $M_1\le M_2$ means $(M_1v,v)\le (M_2v,v)$ for all $v\in\C^2$ with $(\cdot,\cdot)$ the scalar product in $\C^2$. When the matrices $M_2$ and $M_2$ depend on variable or parameters we intend the inequality uniform with the respect to all the variables and parameters involved.
The quasi-symmetriser $Q^{(2)}_\delta$ has the following properties which we will employ in the rest of the paper:
\begin{itemize}
\item[(i)] there exists a constant $C>0$ such that 
\[
C\delta^2I\le Q^{(2)}_\delta\le C I;
\]
\item[(ii)] there exists a constant $C>0$ such that 
\[
|Q_\delta^{(2)}A-A^\ast Q_\delta^{(m)}(\lambda)|\le C\delta Q_\delta^{(2)};
\]
\item[(iii)] the matrix $Q^{(2)}_\delta$ is \emph{nearly diagonal}, i.e., there exists a constant $c_0>0$ such that 
\[
Q^{(2)}_\delta\ge c_0,{\rm diag}\,Q^{(2)}_\delta,
\]
where $${\rm diag}\,Q^{(2)}_\delta ={\rm diag}\{q_{\delta,11},\,q_{\delta, 22}\}.$$
\end{itemize}
These properties have been proven in \cite{KS, GR:12} and are discussed in more detail in the appendix which provides a short survey on the quasi-symmetriser for Sylvester type matrices of any size.
\subsection{Well-posedness result}
We are now ready to extend Theorem \ref{theo_GR} to equations with a non-identically zero right-hand side $f$. We recall that the operators $A_1$ and $A_2$ are defined by
\[
\begin{split}
A_{1}&=- {\rm i}d(t),\\
A_2&=-\sum_{i=1}^n a_i(t)D_{x_i}^2 -{\rm i}\sum_{i=1}^n c_i(t)D_{x_i}-e(t),
\end{split}
\]
with real coefficients $a_i$ and $a_i\ge 0$ for all $i=1,\,\dots,\,n$. In the proof we will make use of the so-called \emph{Fourier transform characterisation of Gevrey functions}:
\begin{itemize}
\item[(i)] Let $v\in \gamma^s_c(\R^n)$. Then, there exist constants $c>0$ and $\nu>0$ such that
\beq
\label{fou_gey_1}
|\widehat{v}(\xi)|\le c\,\esp^{-\nu\lara{\xi}^{\frac{1}{s}}}
\eeq
for all $\xi\in\R^n$.
\item[(ii)] Let $v\in\S'(\R^n)$. If there exist constants $c>0$ and $\nu>0$ such that \eqref{fou_gey_1} holds then $v\in \gamma^s(\R^n)$.
    \end{itemize}

\begin{theorem}
\label{theo_GRi}
Let 
\beq
\begin{split} \nonumber
D_t^2u(t,x) + A_{1}(t,D_x)D_t u(t,x)+ A_{2}(t,D_x)u(t,x)&=-f(t,x),\\
u(0,x)&=g_0(x),\\
D_t u(0,x)&=-{\rm i}g_1(x).
\end{split}
\eeq
Assume that the equation coefficients are continuous with the ones of the principal part of class $C^k$ with $k\ge 2$. Let $g_0,\,g_1\in \gamma^s_c(\R^n)$ and $f \in C([0,T];\gamma^s_c(\R^n))$. If there exists $C_1>0$ such that 
\beq
\label{LCT}
\left|(A_{2}-A_{(2)})(t,\xi)\right|^2 \leq
C_1 (\lambda^2_1(t,\xi)+\lambda_2^2(t,\xi))
\eeq
for all $t\in[0,T]$ and for $\xi$ away from $0$, then the Cauchy problem above has a unique solution $u\in C^2([0,T]; \gamma^s(\R^n))$ provided that 
\[
1\le s\le 1+\frac{k}{2}.
\] 
If the equation coefficients are continuous and the ones of the principal part are of class $C^\infty$ then under the Levi condition \eqref{LCT} the Cauchy problem is well-posed in every Gevrey class.
\end{theorem}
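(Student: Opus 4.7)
The approach is to combine the quasi-symmetriser energy method of Kinoshita--Spagnolo (as used in \cite{GR:12}) with a Duhamel-type handling of the source term. The plan is to work on the first-order reformulation \eqref{system_new_V}
\[
D_tV=\mathbb{A}_1(t,\xi)V+B(t,\xi)V+\widehat F(t,\xi),\qquad V(0,\xi)=V_0(\xi),
\]
pointwise in $\xi\in\R^n$, with $\widehat F=(0,-\widehat f)^T$, and to define the energy
\[
E_\delta(t,\xi)=\bigl(Q^{(2)}_\delta(t,\xi)V(t,\xi),V(t,\xi)\bigr),
\]
where $\delta\in(0,1]$ will be optimised later as a function of $\lara\xi$.

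Differentiating $E_\delta$ in $t$ and using properties (i)--(iii) of the quasi-symmetriser together with the $C^k$-regularity of the principal part (which yields $|\partial_tQ^{(2)}_\delta|\leq C\delta^{-2/k}Q^{(2)}_\delta$ in the nearly-diagonal sense) and the Levi condition \eqref{LCT} (which controls the contribution of $B$), the homogeneous calculation of \cite{GR:12} carries over verbatim except for one new term:
\[
\partial_tE_\delta\leq \bigl(C_1\delta\lara\xi+C_2\delta^{-2/k}+C_3\bigr)E_\delta+2\bigl|\bigl(\widehat F(t,\xi),Q^{(2)}_\delta V(t,\xi)\bigr)\bigr|.
\]
Cauchy--Schwarz and property (i) bound the source contribution by $C|\widehat f(t,\xi)|\sqrt{E_\delta(t,\xi)}$. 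Dividing by $2\sqrt{E_\delta}$ and applying Gronwall's inequality yields
\[
\sqrt{E_\delta(t,\xi)}\leq \esp^{K_\delta(\xi)t}\Bigl(\sqrt{E_\delta(0,\xi)}+C\int_0^t|\widehat f(s,\xi)|\,ds\Bigr),
\]
with $K_\delta(\xi)=C(\delta\lara\xi+\delta^{-2/k}+1)$. Choosing $\delta=\lara\xi^{-k/(k+2)}$ balances the two $\delta$-dependent terms, giving $K_\delta(\xi)\leq C(\lara\xi^{2/(k+2)}+1)$, while property (i) gives $|V(t,\xi)|\leq C\lara\xi^{k/(k+2)}\sqrt{E_\delta(t,\xi)}$.

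The next step is to translate the Gevrey hypotheses into pointwise bounds on $V_0(\xi)$ and $\widehat f(t,\xi)$. Since $g_0,g_1\in\gamma^s_c(\R^n)$ and $f\in C([0,T];\gamma^s_c(\R^n))$, the Fourier characterisation \eqref{fou_gey_1}, combined with compactness of supports and continuity in $t$, provides constants $c,\nu>0$ such that
\[
|V_0(\xi)|+\sup_{t\in[0,T]}|\widehat f(t,\xi)|\leq c\lara\xi\,\esp^{-\nu\lara\xi^{1/s}}.
\]
Inserting this into the energy bound yields
\[
|V(t,\xi)|\leq C\lara\xi^{1+k/(k+2)}\,\esp^{CT\lara\xi^{2/(k+2)}-\nu\lara\xi^{1/s}}\qquad \text{for all }t\in[0,T].
\]
Under the hypothesis $1\leq s\leq 1+k/2$, equivalent to $1/s\geq 2/(k+2)$, one can absorb the polynomial prefactor into a strictly smaller $\nu'>0$ and dominate the right-hand side by $C\esp^{-\nu'\lara\xi^{1/s}}$ for large $\lara\xi$. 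Part (ii) of the Fourier characterisation then gives $u(t,\cdot)=\mathcal{F}^{-1}(\lara\xi^{-1}v_1(t,\cdot))\in\gamma^s(\R^n)$ uniformly in $t\in[0,T]$. Continuity in $t$ is inherited from the Gronwall estimate and from the continuous dependence of the data; the $C^2$ regularity in $t$ is then read off the equation itself, since $D_t u=u_2$ and $D_t^2u$ is expressed through $u$, spatial derivatives of $u$, and $f$. Uniqueness follows from the same estimate applied to the difference of two solutions, which forces $E_\delta\equiv 0$.

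The only genuinely new point with respect to \cite{GR:12} is the bookkeeping of the forcing in the energy inequality: one must verify that the source enters only linearly in $\sqrt{E_\delta}$, so Gronwall preserves the same exponential rate $K_\delta$ obtained in the homogeneous case, and that the same optimal choice $\delta=\lara\xi^{-k/(k+2)}$ which produces Gevrey well-posedness for $f\equiv 0$ also balances against the Gevrey decay of $\widehat f(t,\xi)$ without worsening the admissible range $1\leq s\leq 1+k/2$. The $C^\infty$ case in the last sentence of the theorem is recovered by letting $k\to\infty$, which removes any restriction on $s\geq 1$.
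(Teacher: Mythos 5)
Your proof follows the paper's strategy: the quasi-symmetriser energy on the first-order system, the balancing choice $\delta^{-2/k}=\delta\lara{\xi}$, and the Fourier characterisation of Gevrey functions. The one genuinely different step is the bookkeeping of the source: you estimate $|(\widehat F,Q^{(2)}_\delta V)|\lesssim |\widehat f|\sqrt{E_\delta}$, divide by $\sqrt{E_\delta}$, and apply Gronwall to $\sqrt{E_\delta}$ (the Duhamel route). The paper instead uses near-diagonality of $Q^{(2)}_\delta$ to get $E_\delta\ge 2|V_2|^2$, bounds the source by $4|V_2||\widehat f|\le E_\delta+2|\widehat F|^2$, and runs Gronwall on $E_\delta$ directly, producing the factor $\bigl(E_\delta(0,\xi)+2T\sup_t|\widehat F(t,\xi)|^2\bigr)$. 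Both routes yield the same exponential rate; yours is the standard Duhamel argument, while the paper's avoids the division (and the routine regularisation of $E_\delta$ near zero that the division formally requires).

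One inaccuracy should be flagged. The parenthetical ``$|\partial_tQ^{(2)}_\delta|\le C\delta^{-2/k}Q^{(2)}_\delta$ in the nearly-diagonal sense'' is not a valid pointwise bound. Since the $a_i$ are only nonnegative, the ratio $(\partial_tQ^{(2)}_\delta V,V)/(Q^{(2)}_\delta V,V)$ can reach size $\delta^{-2}$ at instants where some $a_i$ vanishes while $\partial_ta_i\ne 0$ (take $V=(V_1,0)$: the ratio degenerates to $\partial_t q_{\delta,11}/q_{\delta,11}$ and $q_{\delta,11}$ can be as small as $2\delta^2$). What actually holds, and what the paper and \cite{GR:12} use, is the \emph{integral} estimate of Lemma \ref{lem_new}, ultimately a Glaeser-type inequality for nonnegative $C^k$ functions: with $K_\delta(t,\xi)$ denoting that ratio one only has $\int_0^TK_\delta(t,\xi)\,dt\le C\delta^{-2/k}$. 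Your Gronwall exponent should therefore read $\int_0^tK_\delta(s,\xi)\,ds+Ct\delta\lara{\xi}+Ct$ rather than $K_\delta(\xi)\,t$ with a $t$-independent $K_\delta$; the resulting estimate is the same, but the intermediate claim is false as stated. Similarly, the absorption of the polynomial prefactor into a smaller $\nu'$ is immediate only when $1/s>2/(k+2)$; the endpoint $s=1+k/2$ requires the extra argument on pages 430--431 of \cite{GR:12}, to which the paper also defers.
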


\begin{proof}
This proof is based on the proof of Theorem 6 in \cite{GR:12}. We recall that the quasi-symmetriser of $A(\lambda)$ is
\[
Q^{(2)}_{\delta}(\tilde{\lambda}_{1},\tilde{\lambda}_{2})=\left(
    \begin{array}{cc}
      \tilde{\lambda}_{1}^2+\tilde{\lambda}_{2}^2 & -(\tilde{\lambda}_{1}+\tilde{\lambda}_{2})\\
      -(\tilde{\lambda}_{1}+\tilde{\lambda}_{2}) & 2 \\
           \end{array}
  \right) + 2\delta^2 \left(
    \begin{array}{cc}
      1 & 0\\
      0 & 0 \\
           \end{array}
  \right),
\]
where $\tilde{\lambda}_{1,2}\coloneqq \lara{\xi}^{-1}\lambda_{1,2}(t,\xi)$, 
with $\lambda_{1,2}$ as in \eqref{eigenvalues_regular}. Hence the quasi-symmetriser becomes
\[
Q^{(2)}_{\delta}(\tilde{\lambda}_{1},\tilde{\lambda}_{2})=\left(
    \begin{array}{cc}
    2\sum_{i=1}^n a_{i}\xi_i^2\lara{\xi}^{-2} & 0\\[0.2cm]
      0 & 2 \\
           \end{array}
  \right)
   + 2\delta^2 \left(
    \begin{array}{cc}
      1 & 0\\
      0 & 0 \\
           \end{array}
  \right).
\]

Following Sections 4 and 5 of \cite{GR:12} and working on the system \eqref{system_new_V}, we define the energy 
\[
E_\delta(t,\xi)=(Q^{(2)}_\delta(t,\xi) V(t,\xi), V(t,\xi)).
\]
Following the proof of Theorem 6 in \cite{GR:12} we end up with the estimate
\begin{equation}\label{est-rev-regular}
\partial_t E_\delta(t,\xi)\le (K_\delta(t,\xi)+C_2\delta\lara{\xi}+C_3)E_\delta(t,\xi)+|(Q^{(2)}_{\delta} \widehat{F},V)- (Q^{(2)}_{\delta} V,\widehat{F})|,
\end{equation}
where $K_\delta(t,\xi)$ is defined in \cite[Theorem 6]{GR:12} and satisfies
\[
\int_0^T K_\delta(t,\xi)\, dt\le C_1\delta^{-2/k},
\]
and $C_1$, $C_2$, $C_3$ are positive constants. We will now deal with the additional last term which was not present in \cite{GR:12}. By direct computations we have that

\begin{align}\label{RHS_estimate_regular}
|(Q^{(2)}_{\delta} \widehat{F},V)- (Q^{(2)}_{\delta} V,\widehat{F})| 
&=2|\Im(Q^{(2)}_{\delta} \widehat{F},V)|\le 2|-2\widehat{f}\overline{V_2}|\\
	&= 4|V_{2}||\widehat{f}|	\le 2(|V_{2}|^2+|\widehat{F}|^2)\nonumber
\end{align}
Making now use of the fact that the quasi-symmetriser is (nearly) diagonal (see Corollary \ref{coroll_nearly_diagonal} in the appendix) we have that 
\begin{align*}
E_{\delta}(t,\xi)&=(Q^{(2)}_{\delta} V,V) = (\text{diag }Q^{(2)}_{\delta} V,V)	=\left(2\sum_{i=1}^n a_{i}\xi_i^2\lara{\xi}^{-2}+2\delta^2\right)|V_{1}|^2	+2|V_{2}|^2\\
	&\ge 2|V_{2}|^2,
\end{align*}
since $ a_i\ge0$. 
Using this inequality in \eqref{RHS_estimate_regular} we get that
\begin{align*}
|(Q^{(2)}_{\delta} \widehat{F},V)- (Q^{(2)}_{\delta} V,\widehat{F})| 	
	&\le 2|\widehat{F}|^2 + E_{\delta}(t,\xi).
\end{align*}

Therefore \eqref{est-rev-regular} becomes
\begin{align*}
\partial_t E_{\delta}(t,\xi) &\le (K_{\delta}(t,\xi)+C_2\delta\lara{\xi}+2C_3+1)E_{\delta}(t,\xi)+2|\widehat{F}(t,\xi)|^2 \\
&\le (K_{\delta}(t,\xi)+C_2\delta\lara{\xi}+2C_3+1)E_{\delta}(t,\xi)+2\underset{t \in [0,T]}{\sup}|\widehat{F}(t,\xi)|^2.
\end{align*}

By Gronwall's lemma we obtain 
\begin{align*}
E_{\delta}(t,\xi)&\le \left( E_{\delta}(0,\xi)+2T\underset{t \in [0,T]}{\sup}|\widehat{F}(t,\xi)|^2 \right) \esp^{C_1\delta^{-\frac{2}{k}}+C_2T\delta \lara{\xi}+2TC_3+T} \\
	&\le \left(  E_{\delta}(0,\xi)+2T\underset{t \in [0,T]}{\sup}|\widehat{F}(t,\xi)|^2 \right) \esp^{C_T(\delta^{-\frac{2}{k}}+\delta \lara{\xi}+2C_3+1)}\\
	&\le \left(  E_{\delta}(0,\xi)+2T\underset{t \in [0,T]}{\sup}|\widehat{F}(t,\xi)|^2 \right) C'_T\esp^{C'_T(\delta^{-\frac{2}{k}}+\delta \lara{\xi})}.
\end{align*}
Note that in the last term we have the same kind of inequality already analysed in the proof of Theorem 6 in \cite{GR:12}, if $g_0,\, g_1\in \gamma_c^s(\R^n)$  and $f \in C([0,T];\gamma_c^s(\R^n))$. Hence, by setting $\delta^{-2/k}=\delta\lara{\xi}$ and arguing as 
in the proof of Theorem 6 in \cite{GR:12} pages 430-431, we conclude that $V$ possesses the typical estimates
\[
|V(t,\xi)|\le c \esp^{-\nu\lara{\xi}^{\frac{1}{s}}}
\]
of a Gevrey function of order $s$ (uniformly with respect to $t\in[0,T]$) provided that 
\[
1\le s\le 1+\frac{k}{2}.
\] 
This entails the desired well-posedness result and well-posedness in every Gevrey class if $k$ can be chosen arbitrarily (as in the case of smooth coefficients $a_i$ and $b_i$).
\end{proof}

\begin{remark}
In the above theorem we choose  
\[g_0,\,g_1\in \gamma^s_c(\R^n) \text{ and } f \in C([0,T];\gamma^s_c(\R^n)).\]
 However, due to the finite speed of propagation, the previous result can be extended to $g_0,\,g_1\in \gamma^s(\R^n)$ and $f \in C([0,T];\gamma^s(\R^n))$.
\end{remark}

\section{Inhomogeneous wave equation with singular coefficients}
The rest of the paper is devoted to the Cauchy problem 
\beq
\begin{split} \nonumber
D_t^2u(t,x) + A_{1}(t,D_x)D_t u(t,x)+ A_{2}(t,D_x)u(t,x)&=-f(t,x),\\
u(0,x)&=g_0(x),\\
D_t u(0,x)&=-{\rm i}g_1(x).
\end{split}
\eeq
when the coefficients of the operators 
\[
\begin{split}
A_{1}&=- {\rm i}d(t),\\
A_2&=-\sum_{i=1}^n a_i(t)D_{x_i}^2 -{\rm i}\sum_{i=1}^n c_i(t)D_{x_i}-e(t),
\end{split}
\]
are singular, i.e., compactly supported distributions. In the next subsection, we recall what is known about this kind of problem.

\subsection{Starting point}
In \cite{GR:14} Garetto and Ruzhansky studied second order homogeneous hyperbolic equations of the type
\beq
\label{intro_CP_ARMA}
\begin{split}
\partial_t^2u(t,x)-\sum_{i=1}^n b_i(t)\partial_t\partial_{x_i}u(t,x)-\sum_{i=1}^n a_i(t)\partial_{x_i}^2u(t,x)&=0,\\
u(0,x)&=g_0,\\
\partial_t u(0,x)&=g_1,\\
\end{split}
\eeq
where the coefficients $a_i$, $b_i$  are distributions with compact support included in $[0,T]$, such that $a_i$, $b_i$ are real-valued and $a_i\ge 0$ for all $i=1,\,\dots,\,n$. 
The main result of \cite{GR:14} is the following theorem.
\begin{theorem}[Theorem 2.6 in \cite{GR:14}]
\label{theo_vws_ARMA}
Let the coefficients $a_i$, $b_i$ of the Cauchy problem \eqref{intro_CP_ARMA} be distributions 
with compact support included in $[0,T]$, such that $a_i$, $b_i$ are real-valued and $a_i\ge 0$ for all $i=1,\,\dots,\,n$. 
Let the Cauchy data $g_0$, $g_1$ be compactly supported distributions.
Then, the Cauchy problem \eqref{intro_CP_ARMA} has a very weak solution of order $s$, for all $s>1$.
\end{theorem}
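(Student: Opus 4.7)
The plan is to realise the very weak solution as the net obtained by convolving the distributional coefficients and Cauchy data with a Gevrey mollifier, solving the regularised Cauchy problem at each scale and then tracking the dependence of the resulting energy estimate on the regularisation parameter.

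First, fix a nonnegative $\psi \in \ga_c^s(\R)$ with $\int \psi = 1$ and a scale $\omega(\eps) \to 0$ to be chosen later, and set $\psi_\omega(t) = \omega^{-1}\psi(t/\omega)$. Let $a_{i,\eps} := a_i * \psi_{\omega(\eps)}$ and $b_{i,\eps} := b_i * \psi_{\omega(\eps)}$, and similarly regularise $g_0$ and $g_1$. Nonnegativity of $a_i$ and of $\psi$ forces $a_{i,\eps} \ge 0$, so the regularised equation remains weakly hyperbolic at every level. Paley--Wiener--Schwartz then provides moderate $C^k$-bounds of the form
\[
\|a_{i,\eps}\|_{C^k([0,T])} + \|b_{i,\eps}\|_{C^k([0,T])} \le C_k\, \omega(\eps)^{-k-N_0},
\]
for some fixed $N_0$ depending on the distributional orders, while the Gevrey structure of $\psi$ yields data bounds of Gevrey-$s$ type:
\[
|\widehat{g_{j,\eps}}(\xi)| \le C\, \omega(\eps)^{-M}\, \esp^{-\nu_0\, (\omega(\eps)\lara{\xi})^{1/s}}.
\]

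Second, for each fixed $\eps$ the regularised problem has smooth coefficients, so the quasi-symmetriser energy argument behind Theorem~\ref{theo_GR} produces a Gevrey solution $u_\eps$. The crucial step is to retain explicit $\eps$-dependence throughout that argument. Running the computation with the quasi-symmetriser $Q^{(2)}_\de$ exactly as in \cite{GR:12} but with the $\eps$-regularised coefficients, the Gronwall bound on $E_\de(t,\xi) = (Q^{(2)}_\de V_\eps, V_\eps)$ takes the shape
\[
E_\de(t,\xi) \le E_\de(0,\xi)\, \esp^{C\, \omega(\eps)^{-k-N_0}\, \de^{-2/k} \,+\, C\, \de \lara{\xi} \,+\, CT},
\]
since the $K_\de$ term inherits the factor $\omega(\eps)^{-k-N_0}$ from the $C^k$-norms of the mollified coefficients. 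Optimising in $\de$ by balancing the two competing exponents, and choosing $k$ so that $2/(k+2) \le 1/s$, converts this into
\[
|V_\eps(t,\xi)| \le C\, |\widehat{V_{0,\eps}}(\xi)|\, \esp^{C_1\, \omega(\eps)^{-\kappa}\, \lara{\xi}^{1/s}},
\]
with $\kappa > 0$ depending on $s, k, N_0$. Selecting $\omega(\eps)$ as a small positive power of $(|\log \eps|)^{-1}$ (or, equivalently, of $\eps$) lets this exponential be absorbed into the Gevrey decay carried by $\widehat{V_{0,\eps}}$, leaving the moderate estimate
\[
|V_\eps(t,\xi)| \le c\, \eps^{-N_1}\, \esp^{-\nu\, \eps^{N_2} \lara{\xi}^{1/s}}, \qquad t\in[0,T],\ \xi\in\Rn,\ \eps\in(0,1].
\]

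The Fourier characterisation of Gevrey classes recalled before Theorem~\ref{theo_GRi} now shows that $(u_\eps)_\eps$ is a $C^2([0,T]; \ga^s(\Rn))$-moderate net, i.e.\ a very weak solution of order $s$ in the sense of \cite{GR:14}. Since $k$ may be taken arbitrarily large, this works for any prescribed $s > 1$. The main obstacle is precisely the balancing in the third step: the $\eps$-blow-up contributed by the $K_\de$ term grows with $k$, while the Gevrey range demands $k$ large, and the mollification scale $\omega(\eps)$ must be tuned simultaneously against the $\de$-optimisation and the Gevrey-type decay of the regularised data. Pinning down $\kappa$, $N_1$, $N_2$ so that all competing scales fit together is the technical heart of the argument.
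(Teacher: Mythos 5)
Your overall strategy — mollify, run the quasi-symmetriser energy estimate while tracking $\eps$, balance the competing exponents, and tune the scale $\omega(\eps)$ — matches what \cite{GR:14} does, and you have correctly identified the balancing act as the technical heart. However, there are two points where the argument as you wrote it would break down, and they are precisely the two points that make the theorem delicate.

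First, the parenthetical claim that a scale of the form $\omega(\eps)=\eps^{\theta}$ is ``equivalent'' to $\omega(\eps)=(\ln\eps^{-1})^{-r}$ is false, and the whole result depends on the logarithmic choice. After optimising $\delta$ via $\delta^{-2/k}=\delta\lara{\xi}$, the Gronwall factor becomes $\esp^{C\,\omega(\eps)^{-M}\lara{\xi}^{1/\sigma}}$ with $\sigma=1+k/2$, and the data supply decay $\esp^{-\nu\lara{\xi}^{1/s}}$ with $1\le s<\sigma$. Maximising the product over $\xi$ produces a quantity $\esp^{C'\omega(\eps)^{-\sigma/(\sigma-s)}}$, which stays $O(\eps^{-N})$ exactly when $\omega(\eps)^{-1}=O\bigl((\ln\eps^{-1})^{(\sigma-s)/\sigma}\bigr)$; with $\omega(\eps)=\eps^{\theta}$ the same quantity is $\esp^{C'\eps^{-\theta\sigma/(\sigma-s)}}$, which grows faster than any power of $\eps^{-1}$ and destroys moderateness. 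Second, your intermediate bound
$|V_\eps(t,\xi)|\le C|\widehat{V_{0,\eps}}(\xi)|\,\esp^{C_1\omega(\eps)^{-\kappa}\lara{\xi}^{1/s}}$
has $\lara{\xi}^{1/s}$ where the energy estimate actually gives $\lara{\xi}^{1/\sigma}$. This is not a harmless simplification: if the blow-up exponent and the Gevrey decay exponent were both $\lara{\xi}^{1/s}$, the product $\esp^{(-\nu+C_1\omega(\eps)^{-\kappa})\lara{\xi}^{1/s}}$ would be unbounded as soon as $\omega(\eps)^{-\kappa}>\nu/C_1$, i.e.\ for every small $\eps$. The whole absorption rests on the strict gap $1/s-1/\sigma>0$, which you erase by replacing $1/\sigma$ with $1/s$. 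A smaller remark: the paper regularises the coefficients with a positive mollifier $\psi\in C^\infty_c(\R)$ (type~(1)) and the distributional data with a Gelfand–Shilov mollifier of order $s$ truncated at scale $|\ln\eps|^{-1}$ (type~(3)), whereas you use a single nonnegative $\psi\in\gamma^s_c(\R)$ for everything. Your choice can be made to work, but the Paley–Wiener–Schwartz bound you quote for the data then needs to be established for that specific mollifier (it is not automatic), and one must keep the two scales separate — logarithmic for the coefficients, $\eps$ for the data — because the reasons for each choice are different.
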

The concept of a very weak solution has been introduced in \cite{GR:14} to be able to handle equations which might not have a meaningful solution in the usual distributional context. For the sake of the reader we recall the main points of the theory of very weak solutions in the next subsection.

\subsection{Regularisation techniques and very weak solutions}

The main idea in our approach for equations of the type 
\[
\partial_t^2u(t,x)-\sum_{i=1}^n a_i(t)
\partial_{x_i}^2u(t,x)+l(t,\partial_t,\partial_x)u(t,x)=f(t,x),
\]
where
\[
l(t,\partial_t,\partial_x)=\sum_{i=1}^n c_i(t)\partial_{x_i}+d(t)\partial_t+e(t).
\]
is to regularise the distributional coefficients $a_i$,  $c_i$, $d$ and $e$, $i=1,\,\dots,\, n$, via convolution with a suitable mollifier ($\psi\in C^\infty_c(\R)$, $\psi\ge 0$ with $\int\psi=1$) obtaining families of smooth functions $(a_{i,\eps})_\eps$,  $(c_{i,\eps})_\eps$, $(d_{\eps})_\eps$, $(e_{\eps})_\eps$, namely 
\begin{equation}\label{EQ:regs}
a_{i,\eps}=a_i\ast\psi_{\omega(\eps)},\, c_{i,\eps}=c_i\ast\psi_{\omega(\eps)},\, d_{\eps}=d\ast\psi_{\omega(\eps)} \textrm{ and } e_{\eps}=e\ast\psi_{\omega(\eps)} , \nonumber 
\end{equation}
where  $$\psi_{\omega(\eps)}(t)=\omega(\eps)^{-1}\psi(t/\omega(\eps))$$ and $\omega(\eps)$ is a positive function converging to $0$ as $\eps\to 0$. Analogously, we will regularise the right-hand side $f$ by choosing two different mollifiers: one to regularise with respect to the variable $t$ and one to regularise with respect to the variable $x$. This generates \emph{moderate} nets of smooth functions in the sense explained below. In the sequel, the notation $K\Subset\R^n$ stands for $K$ is a compact set in $\R^n$.
\begin{definition}
\label{def_mod_intro}
\leavevmode
\begin{itemize}
\item[(i)] A net of functions $(f_\eps)_\eps\in C^\infty(\R^n)^{(0,1]}$ is $C^\infty$-moderate if for all $K\Subset\R^n$ and for all $\alpha\in\N^n$ there exist $N\in\N$ and $c>0$ such that
\[
\sup_{x\in K}|\partial^\alpha f_\eps(x)|\le c\eps^{-N},
\]
for all $\eps\in(0,1]$. 
\item[(ii)] A net of functions $(f_\eps)_\eps\in \gamma^s(\R^n)^{(0,1]}$ is $\gamma^s$-moderate if for all $K\Subset\R^n$ there exists a constant $c_K>0$ and there exists $N\in\N$ such that 
\[
|\partial^\alpha f_\eps(x)|\le c_K^{|\alpha|+1}(\alpha !)^s \eps^{-N-|\alpha|},
\]
for all $\alpha\in\N^n$, $x\in K$ and $\eps\in(0,1]$.
\item[(iii)] A net of functions $(f_\eps)_\eps\in C^\infty([0,T];\gamma^s(\R^n))^{(0,1]}$ is $C^\infty([0,T];\gamma^s(\R^n))$-mo\-de\-rate 
if for all $K\Subset\R^n$ there exist $N\in\N$, $c>0$ and, for all $k\in\N$ there exist $N_k \in \N$ and $c_k>0$ such that
\[
|\partial_t^k\partial^\alpha_x u_\eps(t,x)|\le c_k\eps^{-N_k} c^{|\alpha|+1}(\alpha !)^s \eps^{-N-|\alpha|},
\]
for all $\alpha\in\N^n$, for all $t\in[0,T]$, $x\in K$ and $\eps\in(0,1]$.
\end{itemize}
\end{definition}
\begin{remark}
\label{rem_our_nets}
Note that the definition of $C^\infty$-moderateness above is natural in the sense that regularisations of distributions are moderate. Indeed, one can prove (see Proposition 2.1 in \cite{G:20} and references therein) that if $u\in\mathcal{E}'(\R^n)$ then there exists $N\in\N$ and for all $\alpha\in\N^n$ there exists $c>0$ such that 
\[
|\partial^\alpha(u\ast\psi_{\omega(\eps)})(x)|\le c\,\omega(\eps)^{-N-|\alpha|},
\]
for all $x\in\R^n$ and $\eps\in(0,1]$. Since $\omega(\eps)$ tends to $0$ as $\eps\to 0$ it is not restrictive to assume that it is bounded. If there exists $c_1,c_2>0$ and $r>0$ such that 
\beq
\label{pos_net}
c_2\eps^r\le \omega(\eps)\le c_1,
\eeq
for all $\eps\in(0,1]$ then $\omega(\eps)$ can be replaced with $\eps$ in the estimates above and we have that the net $(u\ast \psi_{\omega(\eps)})_\eps$ is moderate in the sense of Definition \ref{def_mod_intro}(i).
\end{remark}
In the sequel we will work with \emph{positive nets}, i.e. nets $\omega(\eps)\to 0$ such that \eqref{pos_net} holds. 

For the kind of Cauchy problems we want to deal with (hyperbolic problems with multiplicities), well-posedness is classically obtained in spaces of Gevrey type. This means that we will use $C^\infty$-moderate nets at the level of coefficients but Gevrey-moderate nets at the level of initial data and solutions. More precisely, we introduce the following notion of a `very weak solution' for the Cauchy problem \eqref{intro_eq_Dt}. For simplicity, we will also call $(u_\eps)_\eps$ a Gevrey very weak solution (of order $s$), or simply a very weak solution.

\begin{definition}
\label{def_vws}
Let $s\ge1$. The net $(u_\eps)_\eps\in C^\infty([0,T];\gamma^s(\R^n))$ is a {\bf very weak solution} of order $s$ of the 
Cauchy problem \eqref{intro_eq_Dt} if there exist 
\begin{itemize}
\item[(i)] $C^\infty$-moderate regularisations $a_{i,\eps}$, $c_{i,\eps}$, $d_{\eps}$, $e_{\eps}$ of the coefficients $a_i$,  $c_i$, $d$, $e$, respectively, for $i=1,\,\dots,\,n$, 
\item[(ii)] $C^\infty([0,T];\gamma^s(\R^n))$-moderate regularisation $(f_\eps)_\eps$ of the right-hand side $f$,
\item[(iii)] $\gamma^s$-moderate regularisations $g_{0,\eps}$ and $g_{1,\eps}$ of the initial data $g_0$ and $g_1$, 
respectively,
\end{itemize}
such that $(u_\eps)_\eps$ solves the regularised problem
\[
\begin{split}
\partial_t^2u_{\eps}(t,x)-\sum_{i=1}^n a_{i,\eps}(t)
\partial_{x_i}^2u_{\eps}(t,x)+l_{\eps}(t,\partial_t,\partial_x)u_{\eps}(t,x)&=f_{\eps}(t,x),\\
u_{\eps}(0,x)&=g_{0,\eps}(x),\\
\partial_t u_{\eps}(0,x)&=g_{1,\eps}(x).
\end{split}
\]

where $t\in[0,T]$, $x\in\R^n$ and 
\[
l_\eps(t,\partial_t,\partial_x)=\sum_{i=1}^n c_{i,\eps}(t)\partial_{x_i}+d_{\eps}(t)\partial_t+e_{\eps}(t).
\]
for all $\eps\in(0,1]$, and is $C^\infty([0,T];\gamma^s(\R^n))$-moderate.
\end{definition}
In this paper we will make use of three different types of mollifiers and corresponding nets.
\begin{enumerate}
\item[(1)] \label{mollifier1} Classical Friedrichs mollifier: $\psi\in C^\infty_c(\R^n)$ with $\int\psi=1$. It can be chosen positive if needed. We set
\[
\psi_\eps(x)=\frac{1}{\eps^n}\psi(\frac{x}{\eps}).
\]
\item[(2)]\label{mollifier2} Mollifier with all the moments vanishing: $\varphi\in\S(\R^n)$ with $\int\varphi(x)\, dx=1$ and $\int x^\alpha\varphi(x)\, dx=0$ for all $\alpha\neq 0$. As above
\[
\varphi_\eps(x)=\frac{1}{\eps^n}\varphi(\frac{x}{\eps}).
\]
\item[(3)]\label{mollifier3} Mollifier of order $\sigma>1$: Let $\phi$ be a mollifier in the Gelfand-Shilov space $\mathcal{S}^{(\sigma)}(\R)$ with all the moments vanishing (see e.g. \cite[Chapter 6]{NiRo:10} and \cite{T06}). Let $\chi\in \gamma^\sigma(\R)$ with $0\le\chi\le 1$, $\chi(x)=0$ for $|x|\ge 2$ and $\chi(x)=1$ for $|x|\le 2$. Hence
\[
\rho_\eps(x)\coloneqq\eps^{-1}\phi\biggl(\frac{x}{\eps}\biggr)\chi(x|\ln\eps|)
\]
is a net of Gevrey functions of order $\sigma$.
\end{enumerate}
For more details about the construction of a mollifier of order $\sigma$ we refer the reader to \cite{BenBou:09, GR:14}.

\subsection{Negligible nets and characterisation via Fourier transform}
We can now introduce the notion of a negligible net and show how moderate and negligible nets can be characterised at the level of the Fourier transform. All the results mentioned in the sequel have been proven in \cite{GR:14} so we recall only the statements relevant to this paper.
\begin{definition}
\label{def_neg_net}
\leavevmode
\begin{itemize}
\item[(i)] A net of functions $(f_\eps)_\eps\in C^\infty(\R^n)^{(0,1]}$ is $C^\infty$-negligible if for all $K\Subset\R^n$, for all $\alpha\in\N^n$ and $q\in \N$ there exist $c>0$ such that
\[
\sup_{x\in K}|\partial^\alpha f_\eps(x)|\le c\eps^{q},
\]
for all $\eps\in(0,1]$. 
\item[(ii)] A net of functions $(f_\eps)_\eps\in \gamma^s(\R^n)^{(0,1]}$ is $\gamma^s$-negligible if for all $K\Subset\R^n$ there exists a constant $c=c_K>0$ and for all $q\in\N$ a constant $c_q>0$ such that
\[
|\partial^\alpha f_\eps(x)|\le c_q c^{|\alpha|+1}(\alpha!)^\sigma \eps^q,
\]
for all $\alpha\in\N^n$, $x\in K$ and $\eps\in(0,1]$.
\item[(iii)] A net of functions $(f_\eps)_\eps\in C^\infty([0,T];\gamma^s(\R^n))^{(0,1]}$ is $C^\infty([0,T];\gamma^s(\R^n))$-neg\-li\-gi\-ble if for all $K\Subset\R^n$ there exists a constant $c=c_K>0$ and for all $q\in\N$ a constant $c_q>0$ such that
\[
|\partial_t^k\partial^\alpha_x u_\eps(t,x)|\le c_qc^{|\alpha|+1}(\alpha !)^s \eps^{q},
\]
for all $\alpha\in\N^n$, for all $t\in[0,T]$, $x\in K$ and $\eps\in(0,1]$.
\end{itemize}
\end{definition}
Taking into consideration Definition \ref{def_mod_intro} and Definition \ref{def_neg_net} we now pass to analyse three different types of nets: 
\begin{itemize}
\item $u\ast\varphi_\eps$ where $u\in\gamma^\sigma_c(\R^n)$ and $\varphi_\eps$ is a mollifier of type (2);
\item $u\ast\rho_\eps$ where $u\in C^\infty_c(\R^n)$ and $\rho_\eps$ is a mollifier of type (3);
\item $u\ast\rho_\eps$ where $u\in \E'(\R^n)$ and $\rho_\eps$ is a mollifier of type (3).
\end{itemize}

\begin{proposition}
\label{prop_reg_gevrey_sim}[Proposition 4.1 in \cite{GR:14}]
Let $\sigma> 1$. Let $u\in \gamma_c^\sigma(\R^n)$ and let $\varphi$ be a mollifier of type (2). Then
\begin{itemize}
\item[(i)] there exists $c>0$ such that 
\[ 	
|\partial^\alpha(u\ast\varphi_\eps)(x)|\le c^{|\alpha|+1}(\alpha!)^\sigma
\]
for all $\alpha\in\N^n$, $x\in\R^n$ and $\eps\in(0,1]$;
\item[(ii)] there exists $c>0$ and for all $q\in\N$ a constant $c_q>0$ such that
\[
|\partial^\alpha(u\ast\varphi_\eps-u)(x)|\le c_q c^{|\alpha|+1}(\alpha!)^\sigma \eps^q,
\]
for all $\alpha\in\N^n$, $x\in\R^n$ and $\eps\in(0,1]$;
\item[(iii)] there exist $c,\, c'>0$ such that 
\[
|\widehat {u\ast\varphi_\eps}(\xi)|\le c'\,\esp^{-c\lara{\xi}^{\frac{1}{\sigma}}},
\]
for all $\xi\in\R^n$ and $\eps\in(0,1]$.
\end{itemize}
\end{proposition}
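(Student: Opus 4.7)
The proof should treat the three claims in turn, with part (ii) absorbing most of the real work and parts (i) and (iii) being essentially immediate from standard facts.

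For part (i), I would write $\partial^\alpha(u\ast\varphi_\eps)=(\partial^\alpha u)\ast\varphi_\eps$ and bound it by $\|\partial^\alpha u\|_\infty \|\varphi_\eps\|_{L^1}$. The Gevrey hypothesis $u\in\gamma^\sigma_c(\R^n)$ gives $\|\partial^\alpha u\|_\infty \le C_0^{|\alpha|+1}(\alpha!)^\sigma$, and by the scaling $\varphi_\eps(x)=\eps^{-n}\varphi(x/\eps)$ we have $\|\varphi_\eps\|_{L^1}=\|\varphi\|_{L^1}$ uniformly in $\eps$. Taking $c=C_0\|\varphi\|_{L^1}+C_0$ delivers the claim.

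For part (iii), I would Fourier transform and use $\widehat{u\ast\varphi_\eps}(\xi)=\widehat{u}(\xi)\,\widehat{\varphi}(\eps\xi)$. Since $u\in\gamma^\sigma_c(\R^n)$, the Fourier characterisation of Gevrey functions already recalled in the paper (formula \eqref{fou_gey_1} with $s=\sigma$) gives $|\widehat{u}(\xi)|\le c'\esp^{-c\lara{\xi}^{1/\sigma}}$, while $|\widehat{\varphi}(\eps\xi)|\le \|\varphi\|_{L^1}$ uniformly in $\eps$. Multiplying the two estimates yields (iii) (with $c'$ adjusted by $\|\varphi\|_{L^1}$).

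The heart of the proposition is (ii), and the main obstacle there is the combinatorial control of Gevrey constants through a Taylor remainder. The plan is to fix $N\in\N$ (to be chosen at the end in terms of $q$ and $\eps$) and expand, for any multi-index $\alpha$,
\[
\partial^\alpha u(x-y)=\sum_{|\beta|<N}\frac{(-y)^\beta}{\beta!}\,\partial^{\alpha+\beta}u(x)+R_{N,\alpha}(x,y),
\]
with the standard integral remainder $R_{N,\alpha}$. Convolving with $\varphi_\eps$ and using that \emph{all} non-zero moments of $\varphi_\eps$ vanish (inherited by scaling from $\varphi$), only the $\beta=0$ term survives from the sum, and that term cancels $\partial^\alpha u(x)$. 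Hence
\[
\partial^\alpha(u\ast\varphi_\eps-u)(x)=\int R_{N,\alpha}(x,y)\,\varphi_\eps(y)\,dy.
\]
The Gevrey bound $|\partial^{\alpha+\beta}u|\le C_0^{|\alpha|+|\beta|+1}((\alpha+\beta)!)^\sigma$, together with $(\alpha+\beta)!\le 2^{|\alpha|+|\beta|}\alpha!\,\beta!$, controls $R_{N,\alpha}$ by $A^{|\alpha|+N+1}(\alpha!)^\sigma (N!)^{\sigma-1}|y|^N$ (after summing over $|\beta|=N$ via the multinomial identity). A change of variables then turns $\int |y|^N|\varphi_\eps(y)|\,dy$ into $\eps^N M_N$, where $M_N=\int|z|^N|\varphi(z)|\,dz$ is controlled by the Schwartz seminorms of $\varphi$ by some $B^{N+1}(N!)^\tau$. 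Altogether,
\[
|\partial^\alpha(u\ast\varphi_\eps-u)(x)|\le D^{|\alpha|+1}(\alpha!)^\sigma\,(D\eps)^N (N!)^{\sigma-1+\tau}.
\]
Given $q\in\N$, choosing $N$ proportional to $q$ (and noting that for $\eps\in(0,1]$ the factor $(D\eps)^N(N!)^{\sigma-1+\tau}$ can be absorbed into $c_q\eps^q$ since $\sigma-1+\tau$ is fixed and one is free to optimise $N$) produces the required estimate. The technical point to watch is that $\sigma>1$ is used precisely to ensure that $(N!)^{\sigma-1+\tau}$ does not outrun $(D\eps)^N$ for the chosen $N$; since $\varphi$ is Schwartz (not necessarily Gevrey) one must accept a $(N!)^\tau$ factor from $M_N$, but this only shifts the admissible range of $N$ and does not obstruct the final bound.
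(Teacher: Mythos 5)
The paper does not reproduce a proof of this proposition; it is imported verbatim from \cite{GR:14} and used as a black box. Your parts (i) and (iii) are correct and essentially immediate: (i) is Young's inequality after writing $\partial^\alpha(u\ast\varphi_\eps)=(\partial^\alpha u)\ast\varphi_\eps$ and using $\|\varphi_\eps\|_{L^1}=\|\varphi\|_{L^1}$, and (iii) follows from $\widehat{u\ast\varphi_\eps}(\xi)=\widehat{u}(\xi)\,\widehat{\varphi}(\eps\xi)$, the boundedness of $\widehat{\varphi}$, and the Fourier characterisation \eqref{fou_gey_1} of $\gamma^\sigma_c$. For (ii), Taylor expansion of $\partial^\alpha u(x-y)$ about $y=0$ combined with the vanishing moments of the type-(2) mollifier (which pass to $\varphi_\eps$ by scaling) is indeed the right mechanism, and your remainder estimate $|R_{N,\alpha}(x,y)|\le A^{|\alpha|+N+1}(\alpha!)^\sigma(N!)^{\sigma-1}|y|^N$ is correct.

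The closing paragraph, however, misstates what is going on. The assertion that ``$\sigma>1$ is used precisely to ensure that $(N!)^{\sigma-1+\tau}$ does not outrun $(D\eps)^N$'' is wrong: since $\sigma-1>0$, that factorial factor \emph{grows} with $N$, and for any fixed $\eps\in(0,1]$ a factorial in $N$ eventually dominates any geometric term $(D\eps)^N$. Moreover the moments $M_N=\int|z|^N|\varphi(z)|\,dz$ of a generic Schwartz mollifier need not satisfy a Gevrey-type bound $B^{N+1}(N!)^\tau$, so that step is unjustified as written. Neither worry is actually relevant, because the quantifier structure of (ii) is ``for all $q$, there exists $c_q$'': once $q$ is fixed, take $N=q$, use $\eps^N=\eps^q$, and absorb the single finite number $A^{q+1}(q!)^{\sigma-1}M_q$ into $c_q$. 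No control of growth in $N$ is required. The assumption $\sigma>1$ is not used in part (ii) at all; its role in the proposition is simply to make $\gamma^\sigma_c(\R^n)$ nontrivial and to give the sub-exponential Fourier decay rate appearing in (iii).
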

Clearly, (i) and (ii) show that the corresponding nets are $\gamma^\sigma$-moderate and $\gamma^\sigma$-negligible. The following proposition provides a Fourier characterisation of Gevrey-moderate and Gevrey-negligible nets.
\begin{proposition}
\label{prop_reg_gevrey_mod}[Proposition 4.3 in \cite{GR:14}]
\leavevmode
\begin{itemize}
\item[(i)] If $(u_\eps)_\eps$ is $\gamma^\sigma$-moderate and there exists $K\Subset\R^n$ such that $\supp\, u_\eps\subseteq K$ for all $\eps\in(0,1]$ then there exist $c,\,c'>0$ and $N\in\N$ such that
\beq
\label{star1}
|\widehat{u_\eps}(\xi)|\le c'\eps^{-N}\esp^{-c\eps^{\frac{1}{\sigma}}\lara{\xi}^{\frac{1}{\sigma}}},
\eeq
for all $\xi\in\R^n$ and $\eps\in(0,1]$.
\item[(ii)] If $(u_\eps)_\eps$ is $\gamma^\sigma$-negligible and there exists $K\Subset\R^n$ such that $\supp\, u_\eps\subseteq K$ for all $\eps\in(0,1]$ then there exists $c>0$ and for all $q>0$ there exists $c_q>0$  such that
\beq
\label{star2}
|\widehat{u_\eps}(\xi)|\le c_q\eps^{q}\esp^{-c\eps^{\frac{1}{\sigma}}\lara{\xi}^{\frac{1}{\sigma}}},
\eeq
for all $\xi\in\R^n$ and $\eps\in(0,1]$.

\item[(iii)] If $(u_\eps)_\eps$ is a net of tempered distributions with $(\widehat{u_\eps})_\eps$ satisfying \eqref{star1} then $(u_\eps)_\eps$ is $\gamma^s$-moderate.
\item[(iv)] If $(u_\eps)_\eps$ is a net of tempered distributions with $(\widehat{u_\eps})_\eps$ satisfying \eqref{star2} then $(u_\eps)_\eps$ is $\gamma^s$-negligible.
\end{itemize}
\end{proposition}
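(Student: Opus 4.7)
The plan is to treat the four assertions as two pairs. Parts (i)--(ii) are "direct" implications from derivative bounds to decay of the Fourier transform; parts (iii)--(iv) are "reverse" implications obtained by Fourier inversion. In both directions the scaling $y=\eps^{1/\sigma}\xi$ (equivalently, an optimisation over $|\alpha|$ tuned to the size of $\eps \lara{\xi}$) is what produces the weight $\esp^{\pm c\eps^{1/\sigma}\lara{\xi}^{1/\sigma}}$ out of Gevrey-type factorials $(\alpha!)^\sigma$, so the entire proof amounts to making this optimisation quantitative and uniform in $\eps$.

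For part (i), I would start from the integration-by-parts identity
\[
\xi^\alpha\, \widehat{u_\eps}(\xi)=\int_{K}(-D)^\alpha u_\eps(x)\,\esp^{-\irm x\xi}\,dx,
\]
which, combined with the $\gamma^\sigma$-moderate bound $|\partial^\alpha u_\eps(x)|\le c_K^{|\alpha|+1}(\alpha!)^\sigma\eps^{-N-|\alpha|}$ and the standard multi-index reduction $(\alpha!)^\sigma \le C^{|\alpha|}(|\alpha|!)^\sigma$, yields
\[
|\xi|^{k}\,|\widehat{u_\eps}(\xi)| \le C_K\, c^{k+1}\,(k!)^\sigma\, \eps^{-N-k},\qquad k=|\alpha|\in\N.
\]
Setting $r=c\eps^{-1}|\xi|^{-1}$, the right-hand side equals $C_K c\,\eps^{-N}\, r^{k}(k!)^\sigma$, and Stirling $(k!)^\sigma\sim(k/\esp)^{\sigma k}$ shows that the infimum over $k$ is attained (as a real variable) at $k_0= r^{-1/\sigma}=c^{-1/\sigma}\eps^{1/\sigma}|\xi|^{1/\sigma}$ with value $\sim\esp^{-\sigma k_0}= \esp^{-c'\eps^{1/\sigma}|\xi|^{1/\sigma}}$. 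Choosing the integer $k$ closest to $k_0$ for $|\xi|$ large and using the trivial bound $|\widehat{u_\eps}(\xi)|\le \int|u_\eps|\le C\eps^{-N'}$ for $|\xi|$ bounded, one replaces $|\xi|^{1/\sigma}$ by $\lara{\xi}^{1/\sigma}$ at the cost of absorbing constants and increasing $N$; this yields \eqref{star1}. Part (ii) is identical, with $\eps^{-N-|\alpha|}$ replaced by $\eps^{q}c^{|\alpha|+1}(\alpha!)^\sigma$ in the starting bound: the optimisation in $k$ is unchanged, and the prefactor $\eps^{-N}$ becomes $\eps^{q}$.

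For part (iii), I would use Fourier inversion under a tempered assumption to write
\[
\partial^\alpha u_\eps(x)=\frac{1}{(2\pi)^n}\int_{\Rn}(\irm\xi)^\alpha\,\widehat{u_\eps}(\xi)\,\esp^{\irm x\xi}\,d\xi,
\]
which together with \eqref{star1} gives
\[
|\partial^\alpha u_\eps(x)|\le C\eps^{-N}\int_{\Rn}|\xi|^{|\alpha|}\,\esp^{-c\eps^{1/\sigma}\lara{\xi}^{1/\sigma}}\,d\xi.
\]
The substitution $y=\eps^{1/\sigma}\xi$ pulls out an overall factor $\eps^{-(|\alpha|+n)/\sigma}$ and reduces matters to $\int_{\Rn}|y|^{|\alpha|}\esp^{-c|y|^{1/\sigma}}\,dy$; polar coordinates and $s=c r^{1/\sigma}$ express this as $\sigma c^{-\sigma(|\alpha|+n)}\Gamma(\sigma(|\alpha|+n))$, which by Stirling is dominated by $c_K^{|\alpha|+1}(|\alpha|!)^\sigma$ and hence by $c_K^{|\alpha|+1}(\alpha!)^\sigma$. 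Collecting the factors and using $\eps^{-(|\alpha|+n)/\sigma}\le \eps^{-|\alpha|-n}$ for $\eps\in(0,1]$ (absorbing $n/\sigma$ into a new $N'$) gives a $\gamma^\sigma$-moderate estimate on $(u_\eps)_\eps$. Part (iv) is the same calculation with $\eps^{-N}$ replaced by $\eps^q$ everywhere, producing a $\gamma^\sigma$-negligible bound.

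The main technical nuisance I expect to have to handle with care is the passage between the Gevrey factorial $(\alpha!)^\sigma$ (multi-index) and the scalar factorial $(k!)^\sigma$ or $\Gamma(\sigma(k+n))$, and keeping the constants independent of $\eps$ while the optimal $k_0$ itself depends on $\eps$ and $|\xi|$. A minor but non-automatic point is the switch between $|\xi|^{1/\sigma}$ and $\lara{\xi}^{1/\sigma}$ in \eqref{star1}--\eqref{star2}, which must be handled separately on a bounded set $|\xi|\le R$ where one falls back on the trivial $L^1$-bound on $u_\eps$; the rest is dimensional bookkeeping around the change of variables and the Stirling estimate.
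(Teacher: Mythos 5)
The overall strategy---integration by parts plus optimisation over the derivative order $k$ for (i)--(ii), and Fourier inversion plus a scaling change of variables for (iii)--(iv)---is the standard one and matches what one expects for Proposition 4.3 of \cite{GR:14}. The optimisation in (i) is carried out correctly, landing on $k_0 \sim \eps^{1/\sigma}|\xi|^{1/\sigma}$ and the decay $\esp^{-c\eps^{1/\sigma}|\xi|^{1/\sigma}}$, and the passage from $|\xi|^{1/\sigma}$ to $\lara{\xi}^{1/\sigma}$ on the bounded region is correctly identified as a separate, trivial-$L^1$-bound step. One small inaccuracy: in (ii) the optimisation is \emph{not} unchanged, because the $\gamma^\sigma$-negligible bound has no $\eps^{-|\alpha|}$ factor, so $r=c|\xi|^{-1}$ rather than $c\eps^{-1}|\xi|^{-1}$, and the optimal $k_0 \sim |\xi|^{1/\sigma}$ is $\eps$-independent. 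The minimum one gets is $\esp^{-c'|\xi|^{1/\sigma}}$, which is \emph{stronger} than the claimed $\esp^{-c\eps^{1/\sigma}\lara{\xi}^{1/\sigma}}$ and therefore still implies \eqref{star2}, so this does not invalidate (ii); it just should be stated precisely.

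The substitution in (iii) is wrong as written. With $y=\eps^{1/\sigma}\xi$ one has $\eps^{1/\sigma}|\xi|^{1/\sigma}=\eps^{(\sigma-1)/\sigma^2}|y|^{1/\sigma}$, so the exponential does \emph{not} reduce to $\esp^{-c|y|^{1/\sigma}}$; there is a residual $\eps$-dependence which the proposed prefactor $\eps^{-(|\alpha|+n)/\sigma}$ fails to account for. The correct substitution is $y=\eps\xi$, since $\eps^{1/\sigma}|\xi|^{1/\sigma}=(\eps|\xi|)^{1/\sigma}=|y|^{1/\sigma}$; this cleanly pulls out $\eps^{-(|\alpha|+n)}$, and together with the polar-coordinate computation $\int|y|^{|\alpha|}\esp^{-c|y|^{1/\sigma}}\,dy \le C^{|\alpha|+1}\Gamma(\sigma(|\alpha|+n))$ and Stirling one obtains exactly the $\gamma^\sigma$-moderate bound $c_K^{|\alpha|+1}(\alpha!)^\sigma\eps^{-N'-|\alpha|}$. (If one carries the residual $\eps$-dependence in your substitution through the $\Gamma$-integral, the exponents do eventually combine to $\eps^{-|\alpha|-n}$, but as written the intermediate claims are false.) Finally, for (iv) you should not dismiss it as ``the same calculation'': the Fourier inversion inevitably produces the factor $\eps^{-|\alpha|-n}$, which in (iii) is absorbed into the $\eps^{-N-|\alpha|}$ permitted by $\gamma^\sigma$-moderateness, but $\gamma^\sigma$-negligibility (Definition \ref{def_neg_net}(ii)) allows only $\eps^{q}$ with $c_q$ independent of $\alpha$. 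One must therefore use the arbitrariness of $q$ in \eqref{star2} more carefully (e.g., shifting $q\mapsto q+|\alpha|+n$) and argue why the resulting constants can still be organised in the form $c_{q}\,c^{|\alpha|+1}$; this step is not automatic and deserves an explicit argument.
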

We now pass to consider $u\ast\rho_\eps$ where $u\in C^\infty_c(\R^n)$. We recall a statement proved in Propositions 5.1 and 5.2 in \cite{GR:14}. Note that the following estimates are valid for $\eps$ small enough, i.e., for all $\eps\in(0,\eta]$ with $\eta\in(0,1]$. Without loss of generality we can assume $\eta=1$.
\begin{proposition}
\label{prop_reg_smooth}
Let $u\in C^\infty_c(\R^n)$ and $\rho_\eps$ be a mollifier of type (3) with $\sigma>1$. Then, there exists $K\Subset\R^n$ such that $\supp(u\ast\rho_\eps)\subseteq K$ for all $\eps$ small enough and 
\begin{itemize}
\item[(i)] there exists $c>0$ and $\eta\in(0,1]$ such that
\[
|\partial^\alpha(u\ast\rho_\eps)(x)|\le c^{|\alpha|+1} (\alpha !)^\sigma \eps^{-|\alpha|}
\]
for all $\alpha\in\N^n$, $x\in\R^n$ and $\eps\in(0,\eta]$, or in other words, $(u\ast\rho_\eps)_\eps$ is $\gamma^\sigma_c$-moderate.
\item[(ii)] The net $(u\ast\rho_\eps-u)_\eps$ is compactly supported uniformly in $\eps$ and $C^\infty$-negligible.
\item[(iii)] There exist $c,\,c'>0$ and $\eta\in(0,1]$ such that
\[
|\widehat{u_\eps}(\xi)|\le c'\,\esp^{-c\,\eps^{\frac{1}{\sigma}}\lara{\xi}^{\frac{1}{\sigma}}},
\]
for all $\xi\in\R^n$ and $\eps\in(0,\eta]$.
\end{itemize}
\end{proposition}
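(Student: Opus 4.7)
The three claims are proved by exploiting the structure $\rho_\eps(x) = \eps^{-n}\phi(x/\eps)\chi(x|\ln\eps|)$, where $\phi$ is a Gelfand--Shilov mollifier of class $\mathcal{S}^{(\sigma)}$ with all moments vanishing and $\chi\in\gamma^\sigma$ is a cutoff identically $1$ near the origin. The factor $\chi(\,\cdot\,|\ln\eps|)$ forces $\supp\rho_\eps\subseteq\{|x|\le 2/|\ln\eps|\}$, and this ball lies inside the closed unit ball as soon as $\eps$ is small; hence $\supp(u\ast\rho_\eps)\subseteq\supp u+\overline{B(0,1)}=:K$, uniformly in $\eps$, which settles the support claim.

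For (i), I write $\partial^\alpha(u\ast\rho_\eps)=u\ast\partial^\alpha\rho_\eps$ and apply Young's inequality, so it suffices to control $\|\partial^\alpha\rho_\eps\|_{L^1}$. The Leibniz rule gives
\[
\partial^\alpha\rho_\eps(x)=\sum_{\beta\le\alpha}\binom{\alpha}{\beta}\eps^{-n-|\beta|}\phi^{(\beta)}(x/\eps)\,|\ln\eps|^{|\alpha-\beta|}\chi^{(\alpha-\beta)}(x|\ln\eps|).
\]
The rescaling $y=x/\eps$ yields $\|\phi^{(\beta)}(\cdot/\eps)\|_{L^1}=\eps^n\|\phi^{(\beta)}\|_{L^1}$, which cancels the $\eps^{-n}$ prefactor. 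Gelfand--Shilov decay gives the uniform bound $\|\phi^{(\beta)}\|_{L^1}\le C^{|\beta|+1}(\beta!)^\sigma$, the Gevrey estimate yields $\|\chi^{(\gamma)}\|_\infty\le C^{|\gamma|+1}(\gamma!)^\sigma$, and the combinatorial inequality $\binom{\alpha}{\beta}(\beta!)^\sigma((\alpha-\beta)!)^\sigma\le(\alpha!)^\sigma$ (valid for $\sigma\ge 1$) combines everything into
\[
\|\partial^\alpha\rho_\eps\|_{L^1}\le C^{|\alpha|+1}(\alpha!)^\sigma\eps^{-|\alpha|}\sum_{\beta\le\alpha}(\eps|\ln\eps|)^{|\alpha-\beta|}\le c^{|\alpha|+1}(\alpha!)^\sigma\eps^{-|\alpha|},
\]
for $\eps$ small enough that $\eps|\ln\eps|\le 1$.

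For (ii), I split $u\ast\rho_\eps-u=(u\ast\varphi_\eps-u)+u\ast(\rho_\eps-\varphi_\eps)$, with $\varphi_\eps(x)=\eps^{-n}\phi(x/\eps)$. The first bracket is $C^\infty$-negligible by the classical Taylor-expansion argument that exploits the vanishing moments of $\phi$: expanding $u(x-y)$ to arbitrary order around $x$, each polynomial moment of $\phi_\eps$ cancels, leaving only a remainder that is $O(\eps^q)$ for every $q$. For the second bracket, $\rho_\eps-\varphi_\eps=\varphi_\eps(\chi(\,\cdot\,|\ln\eps|)-1)$ is supported in $|x|\ge 1/|\ln\eps|$, and on this set $|x/\eps|\ge 1/(\eps|\ln\eps|)\to\infty$; the Gelfand--Shilov bound $|\phi(y)|\le C\espo^{-c|y|^{1/\sigma}}$ then makes $\rho_\eps-\varphi_\eps$ super-polynomially small in $\eps$ uniformly in $x$, and the same estimate carried through the Leibniz rule yields negligibility in every $C^m$-norm.

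Finally, for (iii) I combine (i) with the uniform compact support via the elementary bound $|\xi^\alpha\widehat f(\xi)|\le\|\partial^\alpha f\|_{L^1}\le|K|\,\|\partial^\alpha f\|_{L^\infty}$ to obtain, for every $N\in\N$,
\[
\lara{\xi}^N|\widehat{u\ast\rho_\eps}(\xi)|\le C\,c^{N+1}(N!)^\sigma\eps^{-N}.
\]
Optimising over $N$ at the value $N\sim(\eps\lara{\xi})^{1/\sigma}$ and using Stirling produces the exponential decay $|\widehat{u\ast\rho_\eps}(\xi)|\le c'\espo^{-c\eps^{1/\sigma}\lara{\xi}^{1/\sigma}}$, exactly as in the Paley--Wiener--Komatsu characterisation of Gevrey functions of order $\sigma$. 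The main obstacle I expect is the bookkeeping in (i): one must ensure that the singular $\eps^{-|\alpha|}$ behaviour emerges cleanly from the Leibniz expansion, with the logarithmic factors from differentiating the cutoff absorbed thanks to $\eps|\ln\eps|\to 0$, and with the Gelfand--Shilov $L^1$-bounds played against the $\eps^{-n}$ prefactor so that the zero-order norm stays $\eps$-uniform. A secondary subtlety is making the super-polynomial decay in (ii) quantitative at every derivative order, which once more rests on the Gelfand--Shilov estimates for $\phi$.
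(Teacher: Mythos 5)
The paper does not actually prove this proposition in the text: it cites Propositions 5.1 and 5.2 of [GR:14] and states the result. Your reconstruction is correct and is, up to presentational detail, the same argument used in [GR:14]: the support of $\rho_\eps$ is controlled by the cutoff $\chi(\,\cdot\,|\ln\eps|)$, moderateness follows from Young's inequality together with a Leibniz expansion in which the Gelfand--Shilov bounds on $\phi^{(\beta)}$ absorb the $\eps^{-n}$ and leave $\eps^{-|\alpha|}$, the factor $\eps|\ln\eps|\to 0$ kills the logarithmic growth coming from the cutoff, negligibility of $u\ast\rho_\eps-u$ is obtained by comparing with the uncut mollifier $\phi_\eps$ (vanishing moments handle $u\ast\phi_\eps-u$; sub-exponential decay of $\phi$ handles $u\ast(\rho_\eps-\phi_\eps)$), and the Fourier estimate comes from optimising $\lara{\xi}^N|\widehat{u\ast\rho_\eps}|\le c^{N+1}(N!)^\sigma\eps^{-N}$ over $N$.

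Two minor points worth tightening for a clean write-up. First, when you pass from the multi-index inequality to the single-parameter $N$, you must take a maximum over $|\alpha|=N$ and pay a factor $n^{N/2}$ (which is harmless, it only changes $c$), and you should separately dispose of the region $\lara{\xi}\lesssim 1$ where the optimisation does not produce decay but the claimed bound is trivial. Second, in part (ii) you can avoid Leibniz on $\rho_\eps-\phi_\eps$ altogether by writing $\partial^\gamma(u\ast(\rho_\eps-\phi_\eps))=(\partial^\gamma u)\ast(\rho_\eps-\phi_\eps)$ and using only the $L^1$-smallness of $\rho_\eps-\phi_\eps$, which is what the sub-exponential decay of $\phi$ gives on $\{|y|\ge 1/|\ln\eps|\}$; also note that the uniform compact support of $u\ast\rho_\eps-u$ is immediate from the first paragraph (both $u$ and $u\ast\rho_\eps$ are supported in $\supp u+\overline{B(0,1)}$), and is \emph{not} visible from the split through $\phi_\eps$, which is merely Schwartz. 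Neither issue is a gap; both are bookkeeping you already flagged.
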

Finally, we recall a statement proved in Propositions 6.1 in \cite{GR:14}.
\begin{proposition}
\label{prop_reg_distr}[Proposition 6.1 in \cite{GR:14}]
Let $u\in\E'(\R^n)$ and $\rho_\eps$ be a mollifier of type (3) and order $\sigma>1$. Then, there exists $K\Subset\R^n$ such that $\supp(u\ast\rho_\eps)\subseteq K$ for all $\eps$ small enough and there exist $C>0$, $N\in\N$ and $\eta\in(0,1]$ such that
\[
|\partial^\alpha(u\ast\rho_\eps)(x)|\le C^{|\alpha|+1} (\alpha !)^\sigma \eps^{-|\alpha|-N}
\]
for all $\alpha\in\N^n$, $x\in\R^n$ and $\eps\in(0,\eta]$.
\end{proposition}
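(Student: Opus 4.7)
The plan is to exploit three facts in sequence: (a) the Gevrey cutoff built into $\rho_\eps$ produces a uniformly small support; (b) the structure theorem for compactly supported distributions reduces the estimate to convolutions with continuous functions; and (c) a Leibniz expansion combined with the elementary inequality $|\ln\eps|\le(e\eps)^{-1}$ yields a Gevrey-$\sigma$ pointwise bound on $\partial^\gamma\rho_\eps$ with a loss of exactly $\eps^{-|\gamma|-n}$.

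First, since $\chi$ vanishes outside $\{|z|\le 2\}$, the factor $\chi(x|\ln\eps|)$ forces $\supp\rho_\eps\subseteq\{|x|\le 2/|\ln\eps|\}\subseteq B(0,1)$ for all $\eps\le\eta$ small enough; combined with $\supp u\subseteq B(0,R)$ this gives $\supp(u\ast\rho_\eps)\subseteq B(0,R+1)=:K$ for all such $\eps$. Next, by the structure theorem for $\E'(\R^n)$ I write $u=\sum_{|\beta|\le m}\partial^\beta f_\beta$ with $f_\beta\in C(\R^n)$ compactly supported and bounded, where $m$ is the order of $u$. Since convolution commutes with differentiation,
\[
\partial^\alpha(u\ast\rho_\eps)(x)=\sum_{|\beta|\le m} f_\beta\ast\partial^{\alpha+\beta}\rho_\eps(x),
\]
so the task reduces to a uniform pointwise bound on $\partial^\gamma\rho_\eps$ for $|\gamma|\le|\alpha|+m$.

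To obtain this bound I apply Leibniz:
\[
\partial^\gamma\rho_\eps(x)=\sum_{\mu\le\gamma}\binom{\gamma}{\mu}\eps^{-n-|\mu|}(\partial^\mu\phi)(x/\eps)\,|\ln\eps|^{|\gamma-\mu|}(\partial^{\gamma-\mu}\chi)(x|\ln\eps|).
\]
The Gelfand-Shilov bound on $\phi$ gives $|\partial^\mu\phi|\le A^{|\mu|+1}(\mu!)^\sigma$; the Gevrey regularity of $\chi$ gives $|\partial^{\gamma-\mu}\chi|\le B^{|\gamma-\mu|+1}((\gamma-\mu)!)^\sigma$; and the elementary inequality $|\ln\eps|\le(e\eps)^{-1}$, valid for $\eps\in(0,1)$, yields $|\ln\eps|^{|\gamma-\mu|}\le(e\eps)^{-|\gamma-\mu|}$. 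Multiplying all $\eps$-powers then produces exactly $\eps^{-n-|\gamma|}$, while the factorials are tamed by
\[
\binom{\gamma}{\mu}(\mu!)^\sigma((\gamma-\mu)!)^\sigma=\gamma!\,\bigl(\mu!(\gamma-\mu)!\bigr)^{\sigma-1}\le(\gamma!)^\sigma,
\]
since $\mu!(\gamma-\mu)!=\gamma!/\binom{\gamma}{\mu}\le\gamma!$ and $\sigma\ge 1$. The sum over the $\le 2^{|\gamma|}$ indices $\mu\le\gamma$ is absorbed into the exponential constant, giving $|\partial^\gamma\rho_\eps(x)|\le C^{|\gamma|+1}(\gamma!)^\sigma\eps^{-|\gamma|-n}$.

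Feeding this into the structure decomposition, using $((\alpha+\beta)!)^\sigma\le 2^{\sigma(|\alpha|+|\beta|)}(\alpha!)^\sigma(\beta!)^\sigma$, and bounding each convolution by $\|f_\beta\|_\infty\,|\supp f_\beta|\cdot\sup|\partial^{\alpha+\beta}\rho_\eps|$, the finite sum over $|\beta|\le m$ yields the claim with $N=m+n$ and a new constant $\widetilde C$. The main obstacle is the $|\ln\eps|^{|\gamma-\mu|}$ contribution from differentiating the cutoff: naively one fears either a worse Gevrey index (such as $\sigma+1$, which arises if the logarithm is controlled by $k!\,\eps^{-1}$) or a loss $\eps^{-c|\gamma|}$ with $c>1$. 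The sharp bound $|\ln\eps|\le(e\eps)^{-1}$ keeps this term on exactly the same footing as the $\eps^{-|\mu|}$ coming from scaling $\phi$, which is what allows the final exponent $-|\alpha|-N$ to have $N$ independent of $\alpha$ and the Gevrey order to remain $\sigma$.
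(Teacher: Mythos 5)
Your proof is correct and follows the same line as the original argument in [GR:14]: uniform support control via the shrinking cutoff $\chi(x|\ln\eps|)$, reduction to convolutions with continuous functions by the structure theorem for $\E'(\R^n)$, and a Leibniz expansion of $\partial^\gamma\rho_\eps$ in which the crucial inequality $|\ln\eps|\le(e\eps)^{-1}$ keeps the loss at $\eps^{-|\gamma|}$ per differentiation and the Gevrey order fixed at $\sigma$. The only cosmetic difference is that you bound the convolutions by $\|f_\beta\|_\infty\,|\supp f_\beta|\,\sup|\partial^{\alpha+\beta}\rho_\eps|$ rather than by Young's inequality $\|f_\beta\|_\infty\|\partial^{\alpha+\beta}\rho_\eps\|_{L^1}$, which costs the extra additive $n$ in your $N=m+n$; since the statement only asserts existence of some $N\in\N$, this is immaterial.
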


From this we can conclude that the net $(u\ast\rho_\eps)_\eps$ is $\gamma^\sigma_c$-moderate and therefore from Proposition \ref{prop_reg_gevrey_mod}(i) we have that there exists $c>0$ and $N\in\N$ such that
\[
|\widehat{u\ast\rho_\eps}(\xi)|\le c\eps^{-N}\esp^{-c\eps^{\frac{1}{\sigma}}\lara{\xi}^{\frac{1}{\sigma}}},
\]
for all $\xi\in\R^n$ and $\eps$ small enough.
\begin{remark}
Note that in the previous results one could replace $\eps$ with a positive net $\omega(\eps)$. In addition, if  $u\in\E'(\R\times\R^n)$ with $\supp u\subseteq [0,T]\times K$, $K\Subset\R^n$ then by convolution with $\psi_\eps$ and $\rho_\eps$ one can obtain the estimates above formulated for $(u\ast\psi_\eps\rho_\eps)(t,x)$ uniformly valid for $t\in[0,T]$ and $x\in\R^n$. Note that the net
\[
 (u\ast\psi_\eps\rho_\eps)(t,x)=u_{s,y}(\psi_\eps(t-s)\rho_\eps(x-y))
 \]
 is compactly supported uniformly with respect the the parameter $\eps$ when chosen small enough. More precisely, by the structure theorem for distributions, we have that there exist $c>0$, $N_1, N_2\in\N$ and $\eta\in(0,1]$ and for all $k\in \N$ there exists $c_k>0$ such that
\[
|\partial^k_t\partial^\alpha_x(u\ast\psi_\eps\rho_\eps)(t,x)|\le c_k\eps^{-N_1-k}c^{|\alpha|+1} (\alpha !)^\sigma \eps^{-N_2-|\alpha|}
\]
for all $\alpha\in\N^n$, $x\in\R^n$, $t\in[0,T]$ and $\eps\in(0,\eta]$. In other words we get a $C^\infty([0,T], \gamma^\sigma(\R^n))$-moderate net. This is due to the fact that by the structure theorem for distributions, we have that $\exists g \in C(\R \times \R^n)$ with $\supp g\subseteq K_1\times K_2\Subset \R\times\R^n$, $N_1\in\N$, $\beta\in\N^n$ such that
\begin{align*}
|\partial^k_t\partial^\alpha_x(u\ast\psi_\eps\rho_\eps)(t,x)| &=|\partial^k_t\partial^\alpha_x(\partial^{N_1}_t\partial^\beta_x g \ast\psi_\eps\rho_\eps)(t,x)|= |(g \ast\partial^{N_1+k}_t \psi_\eps \partial^{\beta+\alpha}_x\rho_\eps)(t,x)| \\
& = \left|\int_{\R^n} \int_{\R} g(\tau,{x}) \partial^{N_1+k}_t \psi_\eps (t-\tau) \partial^{\beta+\alpha}_x\rho_\eps (x-y) d\tau dy \right|\\
& \le \int_{K_2} \int_{K_1} |g(\tau,y)| |\partial^{N_1+k}_t \psi_\eps (t-\tau)| d\tau |\partial^{\beta+\alpha}_x\rho_\eps (x-y)| dy \\
& \le c_k\eps^{-N_1-k} \int_{K_2}  \Vert g(\cdot,y)\Vert_{L^{\infty}(K_1)} |\partial^{\beta+\alpha}_x\rho_\eps (x-y)| dy.
\end{align*}
Proceeding now as in the proof of Proposition 6.1 in \cite{GR:14} we get the desired estimate with $N_2=|\beta|$ and $\eta$ sufficiently small.

\end{remark}
\subsection{Structure of the proof: what is known and what is unknown}
Let us now consider the regularised Cauchy problem 
\beq
\label{CP_new_reg}
\begin{split}
D_t^2u_{\eps}-\sum_{i=1}^n b_{i,\eps}(t)D_t D_{x_i}u_{\eps}-\sum_{i=1}^n a_{i,\eps}(t)
D_{x_i}^2u_{\eps}-l_{\eps}(t,{\rm i}D_t,{\rm i}D_x)u_{\eps}&=-f_{\eps}(t,x),\\
u_{\eps}(0,x)&=g_{0,\eps}(x),\\
D_t u_{\eps}(0,x)&=-{\rm i}g_{1,\eps}(x).
\end{split}
\eeq
where $t\in[0,T]$, $x\in\R^n$ and 
\[
l_{\eps}(t,{\rm i}D_t,{\rm i}D_x)=\sum_{i=1}^n c_{i,\eps}(t){\rm i}D_{x_i}+d_{\eps}(t){\rm i}D_t+e_{\eps}(t)
\]
and compare it with the one investigated in \cite{GR:14}, i.e., 
\beq
\label{CP_ARMA}
\begin{split}
D_t^2u_{\eps}-\sum_{i=1}^n b_{i,\eps}(t)D_t D_{x_i}u_{\eps}-\sum_{i=1}^n a_{i,\eps}(t)
D_{x_i}^2u_{\eps} &=0,\\
u_{\eps}(0,x)&=g_{0,\eps}(x),\\
D_t u_{\eps}(0,x)&=-{\rm i}g_{1,\eps}(x).
\end{split}
\eeq
Let the coefficients $a_i$, $b_i$  be distributions with compact support included in $[0,T]$, such that $a_i$, $b_i$ are real-valued and $a_i\ge 0$ for all $i=1,\,\dots,\,n$. Analysing the proof of Theorem \ref{theo_vws_ARMA} in \cite{GR:14} for the homogeneous Cauchy problem \eqref{CP_ARMA} we see that a very weak solution $(u_\eps)_\eps$ of order $s$ exists if we have

\begin{itemize}
\item coefficients $a_{i,\eps}=a_i\ast\psi_{\omega(\eps)}$, $b_{i,\eps}=b_i\ast\psi_{\omega(\eps)}$ with $\psi\ge 0$ mollifier in $C^\infty_c(\R)$ of type (1) and 
\[
\omega^{-1}(\eps)=c(\ln(\eps^{-1}))^r,
\]
for some constants $c,\,r>0$ and initial data $g_0,\,g_1\in \gamma_c^s(\R^n)$ with $g_{i,\eps}=g_i\ast\varphi_\eps$\footnote{The regularisation of the initial data is not essential in this case because of the regularity of the initial data but it is necessary as soon as the initial data are less than Gevrey-regular.}, $i=0,\,1$, with $\varphi\in\S(\R^n)$ mollifier of type (2).
\end{itemize}

 
The existence of a $C^\infty([0,T];\gamma^s(\R^n))$-moderate net of solutions $(u_\eps)_\eps$ is proven in \cite{GR:14} by working on the homogeneous system 
\begin{equation*}
\label{system_new_V_ARMA}
\begin{split}
D_t V_\eps&=\mathbb{A}_{1,\eps}(t,\xi)V_\eps,\\
V_\eps|_{t=0}(\xi)&=V_{0,\eps}(\xi),
\end{split}
\end{equation*}
where
\[
V_\eps=\left(
\begin{array}{c}
\widehat{u_{1,\eps}}(t,\xi) \\
 \widehat{u_{2,\eps}}(t,\xi) 
 \end{array}
 \right)
 =\left(
\begin{array}{c}
\lara{\xi}\widehat{u_\eps}(t,\xi) \\
 \widehat{D_tu}_\eps(t,\xi) 
 \end{array}
 \right),
\]
\[
\mathbb{A}_{1,\eps}(t,\xi)= \left(
    \begin{array}{ccccc}
      0 & 1\\
      \sum_{i=1}^n a_{i,\eps}(t){\xi_i}^2\lara{\xi}^{-2} & \sum_{i=1}^n b_{i,\eps}(t)\xi_i\lara{\xi}^{-1} \\
    \end{array}
  \right)\lara{\xi},
\]
and
\[
V_{0,\eps}=\left(
\begin{array}{c}
 \lara{\xi}\widehat{g_{0,\eps}}\\
 -{\rm i}\widehat{g_{1,\eps}}
 \end{array}
 \right)
 \]
and defining the energy 
\[
E_{\delta,\eps}(V)= (Q^{(2)}_{\delta,\eps}(t,\xi) V, V),
\]
by means of the quasi-symmetriser
\begin{align*}
Q^{(2)}_{\delta,\eps}(\tilde{\lambda}_{1,\eps},\tilde{\lambda}_{2,\eps})=&\left(
    \begin{array}{cc}
     \big(\sum_{i=1}^n b_{i,\eps}\xi_i\big)^2\lara{\xi}^{-2}+2\sum_{i=1}^n a_{i,\eps}\xi_i^2\lara{\xi}^{-2} & -\sum_{i=1}^n b_{i,\eps}\xi_i\lara{\xi}^{-1}\\[0.2cm]
      -\sum_{i=1}^n b_{i,\eps}\xi_i\lara{\xi}^{-1} & 2 \\
           \end{array}
  \right)\\
   &+ 2\delta^2 \left(
    \begin{array}{cc}
      1 & 0\\
      0 & 0 \\
           \end{array}
  \right).
\end{align*}
Note that in this case the quasi-symmetriser is depending on two parameters: the regularising parameter $\eps$ and the standard parameter $\delta$.

Now, when we pass to the inhomogeneous Cauchy problem \eqref{CP_new_reg} we end up with the system
\beq
\label{system_inhom_reg}
\begin{split}
D_t V_\eps&=\mathbb{A}_{1,\eps}(t,\xi)V_\eps+B_\eps(t,\xi)V_\eps+\widehat{F_\eps}, \nonumber\\
V_\eps|_{t=0}(\xi)&=V_{0,\eps}(\xi),
\end{split}
\eeq
where   
 \[
B_\eps(t,\xi)=\left(
    \begin{array}{ccccc}
      0 & 0 \\
      \left({\rm i}\sum_{i=1}^n c_{i,\eps}(t)\xi_i +e_{\eps}(t) \right)\lara{\xi}^{-1} & {\rm i}d_{\eps}(t) \\
    \end{array}
  \right) \quad \text{and} \quad 
  \widehat{F_\eps}=\left(
\begin{array}{c}
0\\
 -\widehat{f_\eps}(t,\xi)
 \end{array}
 \right).
\]

The energy is still defined by the quasi-symmetriser $Q^{(2)}_{\delta,\eps}$ as in the homogenous case but differently from \cite{GR:14} we need to handle the matrix of the lower order terms $B_\eps$ and the right-hand side $F_\eps$. This will require
\begin{itemize}
\item the formulation of suitable Levi conditions on $B_{\eps}$ extending \eqref{EQ:lot2_general} to the case of singular coefficients;
\item the inclusion of right-hand side $F_\eps$ in the energy estimates as in Theorem \ref{theo_GRi}.
\end{itemize}
We will achieve our purpose by suitably combining ideas and techniques initiated in \cite{GR:12} (Levi conditions), \cite{GR:14} (singular coefficients) and Theorem \ref{theo_GRi} (right-hand side).

 \subsection{Hypotheses and different cases}
We work under the assumptions that 
\begin{itemize}
\item[(H)] the equation coefficients $a_i$,  $c_i$, $d$, $e$ are distributions with compact support contained in $[0,T]$, $a_i$ are real-valued and $a_i\ge 0$ for all $i=1,\,\dots,\,n$. 
\end{itemize}
Our analysis will distinguish between two cases defined as follows:
\begin{itemize}
\item[{\bf Case 1}] $f\in C^\infty([0,T], \gamma^s_c(\R^n))$ and $g_0,\, g_1\in\gamma_c^s(\R^n)$.
\item[{\bf Case 2}] $f\in \E'(\R\times\R^n)$ with $\supp f\Subset[0,T]\times\R^n$ and  $g_0, \,g_1\in\gamma_c^s(\R^n)$.
\end{itemize}

\section{How to formulate the Levi conditions: motivating example}
In this section we focus on some simple example in order to deduce how to formulate the Levi conditions on the lower order terms. For the sake of simplicity, and without loss of generality, we set the right-hand side equal to $0$. Let us consider the equation 
\[
D_t^2u(t,x)-a(t)D_x^2u(t,x)+c(t)D_x u+d(t)D_tu+e(t)u=0,\qquad x\in\R, t\in[0,T]
\]
where $a(t)=\mu(t)H(t-t_0)$, $t_0\in(0,T)$ and $\mu$ is a positive cut-off function with support contained in $[0,T]$ and identically equal to $1$ around $t_0$. We assume that $c$, $d$, $e$ are compactly supported distributions with support contained in $[0,T]$ as well. In addition, we assume the initial conditions $u|_{t=0}=g_0\in\gamma^s_c(\R)$, $D_t u|_{t=0}=g_1\in\gamma^s_c(\R)$ with $\supp g_i\Subset[0,T]$ for $i=0,\,1$.

Let $\psi\ge 0$ be a mollifier of type (1) and let $\omega(\eps)$ be a positive net. Regularising by convolution we obtain the coefficients
\[
a_\eps= a\ast \psi_{\omega(\eps)},\, c_\eps= c\ast \psi_{\omega(\eps)},\, d_\eps= d\ast \psi_{\omega(\eps)},\, e_\eps= e\ast \psi_{\omega(\eps)}
\]
and the equation 
\[
D_t^2u_{\eps}-a_{\eps}(t)D_x^2u_{\eps}+c_{\eps}(t)D_x u_{\eps}+d_{\eps}(t)D_tu_{\eps}+e_{\eps}(t)u_{\eps}=0,\qquad x\in\R, t\in[0,T],
\]
with initial conditions $u_\eps(0)=g_{0,\eps}=g_0\ast \varphi_\eps$ and $D_t u_\eps(0)=g_{1,\eps}=g_1\ast\varphi_\eps$. Note that $\varphi$ is a mollifier or type (2) so that the nets $(g_0-g_{0,\eps})_\eps$ and $(g_1-g_{1,\eps})_\eps$ are $\gamma^s$-negligible.
%

For each $\eps>0$, the Levi conditions \eqref{EQ:lot2}, reformulated as in \eqref{EQ:lot2_general}, correspond to $\exists C_{1,\eps},\,C_{2,\eps}>0$ such that
\begin{align}\label{EQ:lot2_toy}
\left|c_{\eps}(t)\xi+e_{\eps}(t) \right|^2 &\le 2C_{1,\eps}a_{\eps}(t)|\xi|^2 , \\
|d_{\eps}(t)|^2&\le C_{2,\eps},\nonumber
\end{align}
for $t\in [0,T]$ and $\xi$ away from $0$ (i.e., for $|\xi|\ge R$ for some $R>0$ independent of $\eps$).

%

We now argue as in the proof of Theorem \ref{theo_GRi} and we reduce  the second order equation to a first order system of pseudo-differential operators by setting 
\[
u_{1,\eps}=\lara{D_x}u_{\eps}, \quad u_{2,\eps}=D_t u_{\eps}.
\]
We get the  system
\beq
\label{syst_Taylor}
D_t\left(
                             \begin{array}{c}
                               u_{1,\eps} \\                          
                               u_{2,\eps} \\
                             \end{array}
                           \right)
= \left(
    \begin{array}{ccccc}
      0 & \lara{D_x} \\
      \left(a_{\eps}D_x^2 - c_{\eps}D_x - e_{\eps} \right)\lara{D_x}^{-1} & -d_{\eps} \\
    \end{array}
  \right)
  \left(\begin{array}{c}
                               u_{1,\eps} \\                             
                               u_{2,\eps} \\
                             \end{array}
                           \right), 
                           \eeq
where the matrix above can be written as $\mathbb{A}_{1,\eps}+B_{\eps}$ with
\[
\mathbb{A}_{1,\eps}=\left(
    \begin{array}{ccccc}
      0 & \lara{D_x} \\
      a_{\eps}D_x^2\lara{D_x}^{-1} & 0 \\
    \end{array}
  \right), \quad 
B_{\eps}=\left(
    \begin{array}{ccccc}
      0 & 0 \\
      -\left( c_{\eps}D_x +e_{\eps} \right)\lara{D_x}^{-1} & -d_{\eps} \\
    \end{array}
  \right).  
\]

By Fourier transforming both sides of \eqref{syst_Taylor} in $x$, we obtain 
\beq
\label{system_new}
\begin{split}
D_t V_\eps&=\mathbb{A}_{1,\eps}(t,\xi)V_\eps+B_{\eps}(t,\xi)V_\eps,\\
V_\eps|_{t=0}(\xi)&=V_{0,\eps}(\xi),
\end{split}
\eeq
where $V_\eps$ is the $2$-column with entries $v_{j,\eps}=\widehat{u}_{j,\eps}$, $V_{0,\eps}$ is the $2$-column vector \\ $(\lara{\xi}\widehat{g_{0,\eps}}, \widehat{g_{1,\eps}})^T$ and 
\begin{multline*}
\label{mA}
\mathbb{A}_{1,\eps}(t,\xi)=\left(
    \begin{array}{ccccc}
      0 & \lara{\xi} \\
      a_{\eps}\xi^2\lara{\xi}^{-1} & 0 \\
    \end{array}
  \right), \quad 
B_{\eps}(t,\xi)=\left(
    \begin{array}{ccccc}
      0 & 0 \\
      -\left( c_{\eps}\xi +e_{\eps} \right)\lara{\xi}^{-1} & -d_{\eps} \\
    \end{array}
  \right).
\end{multline*}
 
Henceforth, we will focus on the system \eqref{system_new} and on the matrix
 $$A_{\eps}(t,\xi)\coloneqq\lara{\xi}^{-1}\mathbb{A}_{1,\eps}(t,\xi)$$
 for which we will construct a quasi-symmetriser. Note that the eigenvalues of the matrix $\mathbb{A}_{1,\eps}$
 are exactly the roots $\lambda_{1,\eps}(t,\xi),\,\lambda_{2,\eps}(t,\xi)$.
Furthermore, the Kinoshita-Spagnolo condition \eqref{KS_condition} holds for the eigenvalues
$\lara{\xi}^{-1}\lambda_{1,\eps}(t,\xi),\,\lara{\xi}^{-1}\lambda_{2,\eps}(t,\xi)$
of the $0$-order matrix $A_{\eps}(t,\xi)$ as well. We have that the eigenvalues of $A_{\eps}(t,\xi)$ are
\[
\tilde{\lambda}_{1,2,\eps}(t,\xi)\coloneqq \lara{\xi}^{-1}\lambda_{1,2,\eps}(t,\xi)=\pm \sqrt{a_{\eps}}|\xi| \lara{\xi}^{-1}
\]
and hence the quasi-symmetriser 
\[
Q^{(2)}_{\delta}(\tilde{\lambda}_{1,\eps},\tilde{\lambda}_{2,\eps})=\left(
    \begin{array}{cc}
      \tilde{\lambda}_{1,\eps}^2+\tilde{\lambda}_{2,\eps}^2 & -(\tilde{\lambda}_{1,\eps}+\tilde{\lambda}_{2,\eps})\\
      -(\tilde{\lambda}_{1,\eps}+\tilde{\lambda}_{2,\eps}) & 2 \\
           \end{array}
  \right) + 2\delta^2 \left(
    \begin{array}{cc}
      1 & 0\\
      0 & 0 \\
           \end{array}
  \right),
\]
becomes
\[
Q^{(2)}_{\delta}(\tilde{\lambda}_{1,\eps},\tilde{\lambda}_{2,\eps})=\left(
    \begin{array}{cc}
      2a_{\eps} \xi^2 \lara{\xi}^{-2} & 0\\
      0 & 2 \\
           \end{array}
  \right) + 2\delta^2 \left(
    \begin{array}{cc}
      1 & 0\\
      0 & 0 \\
           \end{array}
  \right).
\] 

This allows us to define the energy
\[
E_{\delta,\eps}(t,\xi)\coloneqq(Q^{(2)}_{\delta,\eps}(t,\xi) V_\eps(t,\xi), V_\eps(t,\xi)).
\] 

We hence have
\begin{align} \label{EQ:energy_deriv}
\partial_t E_{\delta,\eps}(t,\xi)=&(\partial_tQ^{(2)}_{\delta,\eps} V_\eps,V_\eps)+ (Q^{(2)}_{\delta,\eps} \partial_tV_\eps,V_\eps)+(Q^{(2)}_{\delta,\eps} V_\eps,\partial_tV_\eps)\nonumber \\ \nonumber
=&(\partial_tQ^{(2)}_{\delta,\eps} V_\eps,V_\eps)+ i(Q^{(2)}_{\delta,\eps} D_tV_\eps,V_\eps)-i(Q^{(2)}_{\delta,\eps} V_\eps,D_tV_\eps)\\ \nonumber
=&(\partial_tQ^{(2)}_{\delta,\eps} V_\eps,V_\eps)+i(Q^{(2)}_{\delta,\eps}(\lara{\xi}A_{\eps}+B_{\eps})V_\eps,V_\eps)-i(Q^{(2)}_{\delta,\eps} V_\eps,(\lara{\xi}A_{\eps}+B_{\eps})V_\eps)\\ 
=&(\partial_tQ^{(2)}_{\delta,\eps} V_\eps,V_\eps)+i\lara{\xi}((Q^{(2)}_{\delta,\eps} A_\eps-A_\eps^\ast Q^{(2)}_{\delta,\eps})V_\eps,V_\eps)\\
&+i((Q^{(2)}_{\delta,\eps} B_{\eps}-B_{\eps}^\ast Q^{(2)}_{\delta,\eps})V_\eps,V_\eps). \nonumber
\end{align} 
If for a moment we neglect the matrix of lower order terms $B_\eps$, we are dealing with the kind of Cauchy problem and energy estimates studied already in \ref{theo_vws_ARMA} in \cite{GR:14}. So we know that, if $B_\eps\equiv 0$ and the initial data $g_0,\,g_1\in \gamma_c^s(\R)$ with $\supp(g_i)\Subset[0,T]$, a very weak solution $(u_\eps)_\eps$ of order $s$ exists if we take 
\begin{itemize}
\item coefficients $a_{i,\eps}=a_i\ast\psi_{\omega(\eps)}$ with $\psi\ge 0$ mollifier in $C^\infty_c(\R^n)$ of type (1) and 
\[
\omega^{-1}(\eps)=c(\ln(\eps^{-1}))^{r},
\]
for some constants $c,\,r>0$.  
\end{itemize}
Our aim is 
\begin{itemize}
\item to understand which type of nets $C_{1,\eps}$ and $C_{2,\eps}$ are needed in the Levi conditions \eqref{EQ:lot2_toy} in order to get a moderate net $(u_\eps)_\eps$.
\end{itemize}
This is possible by analysing and estimating the term
\[
(Q^{(2)}_{\delta,\eps} B_{\eps}-B_{\eps}^\ast Q^{(2)}_{\delta,\eps})
\]
as in \cite{GR:12} Section 5. That is by making sure that, similarly to the term 
\[\lara{\xi}((Q^{(2)}_{\delta,\eps} A_\eps-A_\eps^\ast Q^{(2)}_{\delta,\eps})V_\eps,V_\eps),\]
it can be estimated by the energy $E_{\delta,\eps}$. 

We recall that, for arbitrary $V\in \C^2$, 
\begin{itemize}
\item 
$
((Q^{(2)}_{\delta,\eps} B_\eps-B_\eps^\ast Q^{(2)}_{\delta,\eps})V,V)=((Q_{0,\eps}^{(2)} B_\eps-B_\eps^\ast Q_{0,\eps}^{(2)})V,V),
$
where
\[
Q_{0,\eps}^{(2)}=\left(
    \begin{array}{cc}
      2a_{\eps}(t) \xi^2 \lara{\xi}^{-2} & 0\\
      0 & 2 \\
           \end{array}
  \right).
\]
\item Since by construction $(Q^{(2)}_{0,\eps} V,V)\le E_{\delta,\eps}$, we want to find a net $C_\eps>0$ such that 
\begin{equation}
\label{cBeps}
|((Q_{0,\eps}^{(2)} B_\eps-B_\eps^\ast Q_{0,\eps}^{(2)})V,V)|\le C_\eps (Q^{(2)}_{0,\eps} V,V)\le C_\eps E_{\delta,\eps},
\end{equation}
for all $V\in \C^2$, $t\in[0,T]$, $\xi\in\R^2$ and $\eps\in(0,1]$. 
\item Note that we can write
\begin{align*}
((Q_{0,\eps}^{(2)} B_\eps-B_\eps^\ast Q_{0,\eps}^{(2)})V,V)&=((\mathcal{W}_\eps B_\eps V,\mathcal{W}_\eps V)-(\mathcal{W}_\eps V,\mathcal{W}_\eps B_\eps V))\\
&=2i\Im (\mathcal{W}_\eps B_\eps V,\mathcal{W}_\eps V),
\end{align*}
where
\[
\mathcal{W}_{\eps}=\left(
    \begin{array}{cc}
      -\tilde{\lambda}_{2,\eps} & 1\\
      -\tilde{\lambda}_{1,\eps} & 1\\
      \end{array}
  \right)  
 =\left(
    \begin{array}{cc}
      \sqrt{a_{\eps}}|\xi| \lara{\xi}^{-1} & 1\\
      -\sqrt{a_{\eps}}|\xi| \lara{\xi}^{-1} & 1\\
      \end{array}
  \right).
\]
So, 
\[
|((Q_{0,\eps}^{(2)} B_\eps-B_\eps^\ast Q_{0,\eps}^{(2)})V,V)|\le 2|\mathcal{W}_\eps B_\eps V||\mathcal{W}_\eps V|.
\]
\item Since
\[
(Q^{(2)}_{0,\eps} V,V)=|\mathcal{W}_\eps V|^2
\]
we have that if
\beq
\label{cB2eps}
|\mathcal{W}_\eps B_\eps V|\le \frac{C_{\eps}}{2}|\mathcal{W}_\eps V|
\eeq
for some net $C_\eps>0$ independent of $t$, $\xi$ and $V$, then the condition \eqref{cBeps} will hold.
\end{itemize}
By direct computations we have
\begin{align*}
\mathcal{W}_{\eps}B_{\eps}V
  &=\left(
    \begin{array}{cc}
      +\sqrt{a_{\eps}}|\xi| \lara{\xi}^{-1} & 1\\
      -\sqrt{a_{\eps}}|\xi| \lara{\xi}^{-1} & 1\\
      \end{array}
  \right)
  \left(
    \begin{array}{cc}
      0 & 0\\
      -\left( c_{\eps}\xi +e_{\eps} \right)\lara{\xi}^{-1} & -d_{\eps} \\
           \end{array}
  \right) 
  \left(
    \begin{array}{c}
      V_{1}\\
      V_{2} \\
           \end{array}
  \right) \\
  &=\left(
    \begin{array}{c}
      -\left(c_{\eps}\xi +e_{\eps} \right)\lara{\xi}^{-1}V_{1}-d_{\eps}V_{2}\\
      -\left(c_{\eps}\xi +e_{\eps} \right)\lara{\xi}^{-1}V_{1}-d_{\eps}V_{2}\\
      \end{array}
  \right).
\end{align*}
Therefore,
\[
|\mathcal{W}_{\eps}B_{\eps}V|^2=2|\left(c_{\eps}\xi +e_{\eps} \right)\lara{\xi}^{-1}V_{1}+d_{\eps}V_{2}|^2.
\]
We also have that
\[
\mathcal{W}_{\eps}V
	=\left(
    \begin{array}{cc}
      +\sqrt{a_{\eps}}|\xi| \lara{\xi}^{-1} & 1\\
      -\sqrt{a_{\eps}}|\xi| \lara{\xi}^{-1} & 1\\
      \end{array}
  \right)
  \left(
    \begin{array}{c}
      V_{1}\\
      V_{2} \\
           \end{array}
  \right)
  =\left(
    \begin{array}{cc}
      \sqrt{a_{\eps}}|\xi| \lara{\xi}^{-1}V_{1}+V_{2} \\
      -\sqrt{a_{\eps}}|\xi| \lara{\xi}^{-1}V_{1}+V_{2}\\
      \end{array}
  \right)
\]
and hence
\[
|\mathcal{W}_{\eps}V|^2=|\sqrt{a_{\eps}}|\xi| \lara{\xi}^{-1}V_{1}+V_{2} |^2+ |-\sqrt{a_{\eps}}|\xi| \lara{\xi}^{-1}V_{1}+V_{2}|^2.
\] 
We can therefore rewrite \eqref{cB2eps} as  
\begin{align} \label{m21}
2|\left(c_{\eps}\xi +e_{\eps} \right)\lara{\xi}^{-1}V_{1}+d_{\eps}V_{2}|^2 \le \frac{C_{\eps}^2}{4}  
& \left( |\sqrt{a_{\eps}}|\xi| \lara{\xi}^{-1}V_{1}+V_{2} |^2 \right.\\
&\left. \,+ |-\sqrt{a_{\eps}}|\xi| \lara{\xi}^{-1}V_{1}+V_{2}|^2 \right) \nonumber
.
\end{align}

Using the inequality $|x-y|^2\le 2|x|^2+2|y|^2$ we have that
\begin{align*}
|\sqrt{a_{\eps}}|\xi| \lara{\xi}^{-1}V_{1}+V_{2} |^2+ |-\sqrt{a_{\eps}}|\xi| \lara{\xi}^{-1}V_{1}+V_{2}|^2 &\geq \frac{1}{2}|2\sqrt{a_{\eps}}|\xi| \lara{\xi}^{-1}V_1|^2 \\
&=2 a_{\eps}|\xi|^2 \lara{\xi}^{-2}|V_1|^2.
\end{align*}

On the other hand
\[
|\left(c_{\eps}\xi +e_{\eps} \right)\lara{\xi}^{-1}V_{1}+d_{\eps}V_{2}|^2 \leq 2 \left(|\left(c_{\eps}\xi +e_{\eps} \right)\lara{\xi}^{-1}|^2|V_1|^2+|d_{\eps}|^2|V_2|^2\right)
\]
and therefore it suffices to show that, under the Levi conditions \eqref{EQ:lot2_toy},
\[
\begin{split}
|c_{\eps}(t)\xi+e_{\eps}(t)|^2 &\le 2C_{1,\eps}  a_{\eps}(t)|\xi|^2,\\
|d_{\eps}(t)|^2&\le C_{2,\eps}, 
\end{split}
\]
there exists $C_\eps>0$ such that
\beq
\label{m22}
4(|\left(c_{\eps}\xi +e_{\eps} \right)\lara{\xi}^{-1}|^2|V_1|^2+|d_{\eps}|^2|V_2|^2 )\le \frac{C_{\eps}^2}{4}(2 a_{\eps}|\xi|^2 \lara{\xi}^{-2}|V_1|^2).
\eeq 
In order to prove that the inequality above holds, we argue as in \cite{GR:12} on two different areas of $\C^2$ that we will denote by Area 1 and Area 2. In detail:
 \[
 \begin{split}
 \text{Area 1}&=\{V\in C^2:\, |V_2|^2\le  2\gamma  a_{\eps}|\xi|^2 \lara{\xi}^{-2}|V_1|^2\}\\
 \text{Area 2}&=\{V\in C^2:\, |V_2|^2>  2\gamma  a_{\eps}|\xi|^2 \lara{\xi}^{-2}|V_1|^2\},
 \end{split}
 \]
 for $\gamma>0$.

{\bf Area 1.} Let $|V_2|^2\le  2\gamma  a_{\eps}|\xi|^2 \lara{\xi}^{-2}|V_1|^2$. Note that if
\beq
\label{B1B2}
4(|\left(c_{\eps}\xi +e_{\eps} \right)\lara{\xi}^{-1}|^2+\gamma a_{\eps}|\xi|^2 \lara{\xi}^{-2}|d_{\eps}|^2 )\le \frac{C_{\eps}^2}{4}(2 a_{\eps}|\xi|^2 \lara{\xi}^{-2})
\eeq
then \eqref{m22} holds. Using the Levi conditions \eqref{EQ:lot2_toy}, we have that \eqref{B1B2} holds independently of $\gamma>0$ with $\frac{C_{\eps}^2}{4}=4C_{1,\eps}+4\gamma C_{2,\eps}$.

{\bf Area 2.} Let  $|V_2|^2>  2\gamma  a_{\eps}|\xi|^2 \lara{\xi}^{-2}|V_1|^2$.

Using the Levi conditions \eqref{EQ:lot2_toy},
\begin{align*}
&4(|\left(c_{\eps}\xi +e_{\eps} \right)\lara{\xi}^{-1}|^2|V_1|^2+|d_{\eps}|^2|V_2|^2) \le 4(2C_{1,\eps}a_{\eps}|\xi|^2 \lara{\xi}^{-2}|V_1|^2+C_{2,\eps}|V_2|^2)\\
&\le 4(C_{2,\eps}+\frac{C_{1,\eps}}{2\gamma})|V_2|^2
\le 4\max(C_{1,\eps},C_{2,\eps})(1+\frac{1}{2\gamma})|V_2|^2.
\end{align*}
Since $|\sqrt{a_{\eps}}|\xi| \lara{\xi}^{-1}V_1|<1/\sqrt{2\gamma} |V_2|$ we have that
\begin{align*}
&\frac{C_{\eps}^2}{4}\left( |\sqrt{a_{\eps}}|\xi| \lara{\xi}^{-1}V_{1}+V_{2} |^2+ |-\sqrt{a_{\eps}}|\xi| \lara{\xi}^{-1}V_{1}+V_{2}|^2 \right) \\
&\ge \frac{C_{\eps}^2}{4}\left((|V_2|-|\sqrt{a_{\eps}}|\xi| \lara{\xi}^{-1} V_1|)^2+(|V_2|-|\sqrt{a_{\eps}}|\xi| \lara{\xi}^{-1}V_1|)^2\right)\\
&\ge 2\frac{C_{\eps}^2}{4}(1-\frac{1}{\sqrt{2\gamma}})^2|V_2|^2 \ge\frac{C_{\eps}^2}{4} |V_2|^2,
\end{align*}
for $\gamma$ big enough. Hence, \eqref{m21} holds  if $\gamma$ is big enough with $\frac{C_{\eps}^2}{4}\geq 4 \max(C_{1,\eps},C_{2,\eps})$. Note that we can choose $C_{\eps}^2$ as in the Area 1 and that $\gamma$ is independent of $\eps$ and it is a constant once it is fixed. We have therefore proven that under the Levi conditions  
\[
\begin{split}
|c_{\eps}(t)\xi+e_{\eps}(t)|^2 &\le 2C_{1,\eps}  a_{\eps}(t)|\xi|^2,\\
|d_{\eps}(t)|^2&\le C_{2,\eps}, 
\end{split}
\]
we have 
\beq
\label{est_B_toy}
|((Q_{\delta,\eps}^{(2)} B_\eps-B_\eps^\ast Q_{\delta,\eps}^{(2)})V,V)|\le C_\eps E_{\delta,\eps},
\eeq
with $C_{\eps}=\sqrt{16C_{1,\eps}+16\gamma C_{2,\eps}}$ for some $\gamma>0$ independent of $\eps$.

This shows that our Levi conditions are sufficient to get the energy estimates we want but it is still unclear which kind of nets $C_{1,\eps}$ and $C_{2,\eps}$ we need to get moderateness of the net of solutions $(u_\eps)_\eps$. For this reason, we need to go back to \eqref{EQ:energy_deriv}. We have
\begin{align*} \label{EQ:energy_deriv_2}
\partial_t E_{\delta,\eps}(t,\xi) \le& |(\partial_tQ^{(2)}_{\delta,\eps} V_{\eps},V_{\eps})|+|\lara{\xi}((Q^{(2)}_{\delta,\eps} A_\eps-A_\eps^\ast Q^{(2)}_{\delta,\eps})V_{\eps},V_{\eps})|\\ \nonumber
&+|((Q^{(2)}_{\delta,\eps} B_{\eps}-B_{\eps}^\ast Q^{(2)}_{\delta,\eps})V_{\eps},V_{\eps})|.
\end{align*}
By combining the arguments of Section 4.2 in \cite{GR:14} with \eqref{est_B_toy} we obtain
\begin{equation*}\label{est-rev}
\partial_t E_{\delta,\eps}(t,\xi) \le (K_{\delta,\eps}(t,\xi)+C_2\delta\lara{\xi}+C_{\eps})E_{\delta,\eps}(t,\xi) ,
\end{equation*}
where $K_{\delta,\eps}(t,\xi)$ has the property
\[
\int_0^T K_{\delta,\eps}(t,\xi)\, dt \le C_1\delta^{-\frac{2}{k}}\omega(\eps)^{-1},
\]
with $k\ge 2$ depending on $a$ and $C_1,\, C_2>0$ and $C_{\eps}$ as above.
By Gronwall's lemma we obtain 
\[
E_{\delta,\eps}(t,\xi)\le E_{\delta,\eps}(0,\xi)\esp^{C_1\delta^{-\frac{2}{k}}\omega(\eps)^{-1}+C_2T\delta \lara{\xi}+TC_{\eps}}. 
\]
As in \cite{GR:12}, we set $\delta^{-\frac{2}{k}}=\delta\lara{\xi}$. It follows that $\delta^{-\frac{2}{k}}=\lara{\xi}^{\frac{1}{\sigma}}$, where
\[
\sigma= 1+\frac{k}{2}.
\] 
It follows that for $\delta^{-\frac{2}{k}}=\delta\lara{\xi}$,
\beq
\label{En_toy}
\begin{split}
E_{\delta,\eps}(t,\xi)&\le E_{\delta,\eps}(0,\xi)\esp^{TC_\eps}\esp^{C_1\lara{\xi}^{\frac{1}{\sigma}}\omega(\eps)^{-1}}\esp^{C_2T\lara{\xi}^{\frac{1}{\sigma}}}\\
&\le E_{\delta,\eps}(0,\xi)\esp^{TC_\eps} \esp^{C_T\lara{\xi}^{\frac{1}{\sigma}}\omega(\eps)^{-1}},
\end{split}
\eeq
for some constant $C_T>0$. We recall that the energy $E_{\delta,\eps}$ above is defined as 
\[
(Q^{(2)}_\delta(\lambda_\eps) V_\eps,V_\eps)=(2a_{\eps} \xi^2 \lara{\xi}^{-2} +2\delta^2)|V_{1,\eps}|^2+2|V_{2,\eps}|^2.
\]
Since 
\[
0\le 2a_{\eps} \xi^2 \lara{\xi}^{-2}\le 2\Vert a_\eps\Vert_\infty\le 2\Vert \psi_{\omega(\eps)}\Vert_1 \Vert a\Vert_\infty\le 2
\]
we get the inequalities
\[
\begin{split}
E_{\delta,\eps}(t,\xi)&\ge 2\delta^2|V_{1,\eps}|^2+2|V_{2,\eps}|^2\ge 2\delta^2|V_\eps|^2,\\
E_{\delta,\eps}(t,\xi)&\le(2\Vert a_\eps\Vert_\infty+2)|V_{1,\eps}|^2+2|V_{2,\eps}|^2\le (2+2\Vert a_\eps\Vert_\infty)|V_\eps|^2 \le 4 |V_\eps|^2.
\end{split}
\]
Making use of these bounds in \eqref{En_toy} we can write, for $\delta^{-\frac{2}{k}}=\lara{\xi}^{\frac{1}{\sigma}}$,
\[
\begin{split}
2\delta^2|V_\eps|^2&\le E_{\delta,\eps}(0,\xi)\esp^{TC_\eps} \esp^{C_T\lara{\xi}^{\frac{1}{\sigma}}\omega(\eps)^{-1}},\\
2|V_\eps|^2&\le \lara{\xi}^{\frac{k}{\sigma}}E_{\delta,\eps}(0,\xi)\esp^{TC_\eps} \esp^{C_T\lara{\xi}^{\frac{1}{\sigma}}\omega(\eps)^{-1}},\\
|V_\eps|^2&\le 2 \lara{\xi}^{\frac{k}{\sigma}}|V_\eps(0)|^2\esp^{TC_\eps} \esp^{C_T\lara{\xi}^{\frac{1}{\sigma}}\omega(\eps)^{-1}}.
\end{split}
\]
Let us now focus on the last estimate for $|V_\eps|$:
\[
|V_\eps|^2\le 2 \lara{\xi}^{\frac{k}{\sigma}}|V_\eps(0)|^2\esp^{TC_\eps} \esp^{C_T\lara{\xi}^{\frac{1}{\sigma}}\omega(\eps)^{-1}}.
\]
In order to get moderate estimates for $V_\eps$ we need specific assumptions on both the nets $(C_\eps)_\eps$ and $(\omega(\eps)^{-1})_\eps$. In detail, arguing as in \cite{GR:14} Section 4 and by direct computations we have 
\[
\begin{split}
|V_\eps|^2&\le 2 \lara{\xi}^{\frac{k}{\sigma}}c_0\esp^{-c_0\lara{\xi}^{\frac{1}{s}}}\esp^{TC_\eps} \esp^{C_T\lara{\xi}^{\frac{1}{\sigma}}\omega(\eps)^{-1}}\\
&= 2 \lara{\xi}^{\frac{k}{\sigma}}c_0\esp^{-\frac{c_0}{2}\lara{\xi}^{\frac{1}{s}}}\esp^{-\frac{c_0}{2}\lara{\xi}^{\frac{1}{s}}}\esp^{TC_\eps} \esp^{C_T\lara{\xi}^{\frac{1}{\sigma}}\omega(\eps)^{-1}}.
\end{split}
\]
This leads, for some new constant $c>0$, to 
\[
|V_\eps|^2\le c\esp^{-c\lara{\xi}^{\frac{1}{s}}}\esp^{-\frac{c_0}{2}\lara{\xi}^{\frac{1}{s}}}\esp^{TC_\eps} \esp^{C_T\lara{\xi}^{\frac{1}{\sigma}}\omega(\eps)^{-1}}.
\]
If we want $\gamma^s$-moderate estimates we need to control the terms 
\beq
\label{term2c}
\esp^{TC_\eps} \nonumber
\eeq
and 
\beq
\label{term3c}
\biggl(\esp^{-\frac{c_0}{2}\lara{\xi}^{\frac{1}{s}}}\esp^{C_T\lara{\xi}^{\frac{1}{\sigma}}\omega(\eps)^{-1}}\biggr).
\eeq
This means to require that 
\[
C_\eps=O(\ln(\eps^{-1}))
\]
and to argue as in \cite{GR:14} (see (33) in Section 4.2) for \eqref{term3c}, i.e., 
\[
\omega^{-1}(\eps)=O((\ln(\eps^{-1}))^{r}),
\]
with 
\[
r=\frac{\frac{1}{s}-\frac{1}{\sigma}}{\frac{1}{s}}, \qquad 1<s<\sigma=1+\frac{k}{2},
\]
for a fixed $k\ge 2$. This allows us to prove that, for some $c'>0$ and $N\in\N$, 
\[
|V_\eps(t,\xi)|\le c'\eps^{-N}\esp^{-c'\lara{\xi}^{\frac{1}{s}}},
\]
uniformly in $t,\xi$ and $\eps$. Concluding, this example shows us that since $$C_{\eps}=\sqrt{16C_{1,\eps}+16\gamma C_{2,\eps}}=4\sqrt{C_{1,\eps}+\gamma C_{2,\eps}}$$ for some $\gamma>0$ independent of $\eps$, both the nets $C_{1,\eps}$ and $C_{2,\eps}$ appearing in the Levi conditions need to be $O((\ln(\eps^{-1}))^2)$ as $\eps\to 0$. In other words, we need Levi conditions \eqref{EQ:lot2_toy} of the type
\[
\begin{split}
|c_{\eps}(t)\xi+e_{\eps}(t)|^2 &\le c_1(\ln(\eps^{-1}))^2 a_{\eps}(t)|\xi|^2,\\
|d_{\eps}(t)|^2&\le c_2(\ln(\eps^{-1}))^2, 
\end{split}
\]
for $\eps\in(0,1]$, $t\in[0,T]$ and $\xi$ away from $0$.

For instance, these conditions are fulfilled by equations of the type
\[
D_t^2u_{\eps}-a_{\eps}(t)D_x^2u_{\eps}+d_{\eps}(t)D_tu_{\eps}=0,\qquad x\in\R, t\in[0,T],
\]
where $a_\eps(t)=a\ast\psi_{\omega(\eps)}$, with $a(t)=\mu(t)H(t-t_0)$ and $\omega(\eps)^{-1}=O((\ln(\eps^{-1}))^{r})$ and $d_\eps=d\ast\psi_{\nu(\eps)}$, where $d\in\E'(\R)$ with $\supp\, d\Subset\R$ and $\nu(\eps)=O((\ln(\eps^{-1}))^{-\frac{1}{N}})$, for some $N$ depending on $d$. Note that if $d=\delta$ then we can take $N=1$.

\section{Levi conditions and very weak solutions: Case 1}
We now consider the general Cauchy problem 
\beq
\label{CP_gen}
\begin{split}
\partial_t^2u(t,x)-\sum_{i=1}^n a_i(t)\partial_{x_i}^2u(t,x)+l(t,\partial_t,\partial_x)u(t,x)&=f(t,x),\\
u(0,x)&=g_0(x),\\
\partial_t u(0,x)&=g_1(x),\\
\end{split}
\eeq
where $t\in[0,T]$, $x\in\R^n$ and 
\[
l(t,\partial_t,\partial_x)=\sum_{i=1}^n c_i(t)\partial_{x_i}+d(t)\partial_t+e(t).
\]

In {\bf Case 1} we work under the following set of hypotheses:

\begin{itemize}
\item[(i)] the coefficients $a_i$ are real-valued distributions with compact support contained in $[0,T]$ with $a_i\ge 0$ for all $t\in[0,T]$,
 \item[(ii)] $f\in C([0,T], \gamma_c^s(\R^n))$,
\item[(iii)] $g_0,\,g_1\in \gamma_c^s(\R^n)$.
\end{itemize}

We first regularise the coefficients and initial data of the Cauchy problem \eqref{CP_gen} and then we rewrite it with $D_t$ and $D_x$ derivatives. We get  
\beq
\label{intro_eq_Dt_regularised}
\begin{split}
D_t^2u_{\eps}-\sum_{i=1}^n a_{i,\eps}(t)
D_{x_i}^2u_{\eps}-l_{\eps}(t,{\rm i}D_t,{\rm i}D_x)u_{\eps}&=-f_{\eps}(t,x),\\
u_{\eps}(0,x)&=g_{0,\eps}(x),\\
D_t u_{\eps}(0,x)&=-{\rm i}g_{1,\eps}(x).
\end{split}
\eeq
where $t\in[0,T]$, $x\in\R^n$ and 
\[
l_{\eps}(t,{\rm i}D_t,{\rm i}D_x)=\sum_{i=1}^n c_{i,\eps}(t){\rm i}D_{x_i}+d_{\eps}(t){\rm i}D_t+e_{\eps}(t)
\]
with $a_{i,\eps}=a_{i} \ast \psi_{\omega(\eps)}$, $c_{i,\eps}=c_{i} \ast \psi_{\omega(\eps)}$, $d_{\eps}=d \ast \psi_{\nu(\eps)}$, $e_{\eps}=e \ast \psi_{\nu(\eps)}$, $f_{\eps}=f \ast \varphi_\eps $, $g_{0,\eps}=g_{0} \ast \varphi_\eps$ and $g_{1,\eps}=g_{1} \ast \varphi_\eps$. Above, $\psi$ is a mollifier of type (1) and $\varphi$ a mollifier of type (2). Moreover, $\psi$ can be chosen in such a way that the regularised coefficients $(a_{i,\eps})_\eps$, $(b_{i,\eps})_\eps$ and $(c_{i,\eps})_\eps$ have compact support contained in $[0,T]$.

We now reduce \eqref{intro_eq_Dt_regularised} to a first order system using the transformation
\[	
u_{1,\eps}=\lara{D_x}u_{\eps}, \quad u_{2,\eps}=D_t u_{\eps},
\]
where  $\lara{D_x}$ is the pseudo-differential operator with symbol $\lara{\xi}=(1+|\xi|^2)^{\frac{1}{2}}$.
This gives us the following system
\beq \label{syst_Taylor_general}
D_t\left(
                             \begin{array}{c}
                               u_{1,\eps} \\                          
                               u_{2,\eps} \\
                             \end{array}
                           \right)
= M_{\eps}
  \left(\begin{array}{c}
                               u_{1,\eps} \\                             
                               u_{2,\eps} \\
                             \end{array}
                           \right)
                           -
\left(\begin{array}{c}
                               0 \\                               
                               f_{\eps} \\
                             \end{array}
                           \right) 
\eeq
where
\[
M_{\eps} =\left(
    \begin{array}{ccccc}
      0 & \lara{D_x} \\
      \left( \sum_{i=1}^n a_{i,\eps}D_{x_i}^2 + {\rm i}\sum_{i=1}^n c_{i,\eps} D_{x_i} + e_{\eps} \right)\lara{D_x}^{-1} & {\rm i}d_{\eps} \\
    \end{array}
  \right)
\]
The matrix  above can be written as $M_{\eps}=\mathbb{A}_{1,\eps}+B_{\eps}$ with
\begin{align*}
\mathbb{A}_{1,\eps}&=\left(
    \begin{array}{ccccc}
      0 & \lara{D_x} \\
      \sum_{i=1}^n a_{i,\eps}D_{x_i}^2\lara{D_x}^{-1} & 0 \\
    \end{array}
  \right) \quad \text{and}\\
B_{\eps}&=\left(
    \begin{array}{ccccc}
      0 & 0 \\
      \left({\rm i}\sum_{i=1}^n c_{i,\eps} D_{x_i} +e_{\eps} \right)\lara{D_x}^{-1} & {\rm i}d_{\eps} \\
    \end{array}
  \right).  
\end{align*}

By Fourier transforming both sides of \eqref{syst_Taylor_general} in $x$, we obtain the system
\beq
\label{system_new_general}
\begin{split}
D_t V&=\mathbb{A}_{1,\eps}(t,\xi)V+B_{\eps}(t,\xi)V+\widehat{F}_{\eps}(t,\xi),\\
V|_{t=0}(\xi)&=V_{0,\eps}(\xi),
\end{split}
\eeq
where $V$ is the $2$-column with entries $v_{j,\eps}=\widehat{u}_{j,\eps}$, $V_{0,\eps}$ is the $2$-column with entries
$v_{0,1,\eps}=\lara{\xi}\widehat{g}_{0,\eps}$, $v_{0,2,\eps}=-{\rm i}\widehat{g}_{1,\eps}$  and
\begin{align*}
\label{mA_general}
\mathbb{A}_{1,\eps}(t,\xi)&=\left(
    \begin{array}{ccccc}
      0 & \lara{\xi} \\
      \sum_{i=1}^n a_{i,\eps}\xi_i^2\lara{\xi}^{-1} & 0 \\
    \end{array}
  \right), \quad  \\
B_{\eps}(t,\xi)&=\left(
    \begin{array}{ccccc}
      0 & 0 \\
      \left({\rm i}\sum_{i=1}^n c_{i,\eps} \xi_i+e_{\eps} \right)\lara{\xi}^{-1} & {\rm i}d_{\eps} \\
    \end{array}
  \right), \quad	  
\widehat{F}_{\eps}(t,\xi)=\left(\begin{array}{c}
                               0 \\                              
                               -\widehat{f}_{\eps}(t,\cdot)(\xi) \\
                             \end{array}
                           \right). 
\end{align*}

Henceforth, we will focus on the system \eqref{system_new_general} and on the matrix
 $$A_{\eps}(t,\xi)\coloneqq\lara{\xi}^{-1}\mathbb{A}_{1,\eps}(t,\xi)$$
 for which we will construct a quasi-symmetriser. Note that the eigenvalues of the matrix $\mathbb{A}_{1,\eps}$
 are exactly the roots $\lambda_{1,\eps}(t,\xi),\,\lambda_{2,\eps}(t,\xi)$.
 Furthermore, the condition \eqref{KS_condition} holds for the eigenvalues
$\lara{\xi}^{-1}\lambda_{1,\eps}(t,\xi),\,\lara{\xi}^{-1}\lambda_{2,\eps}(t,\xi)$
of the $0$-order matrix $A_{\eps}(t,\xi)$ as well. We have that the eigenvalues of $A_{\eps}(t,\xi)$ are
\begin{align*}
\tilde{\lambda}_{1,2,\eps}\coloneqq \lara{\xi}^{-1}\lambda_{1,2,\eps}(t,\xi)= \pm\sqrt{\sum_{i=1}^n a_{i,\eps}\xi_i^2\lara{\xi}^{-2}},
\end{align*}
and hence the quasi-symmetriser 
\[
Q^{(2)}_{\delta}(\tilde{\lambda}_{1,\eps},\tilde{\lambda}_{2,\eps})=\left(
    \begin{array}{cc}
      \tilde{\lambda}_{1,\eps}^2+\tilde{\lambda}_{2,\eps}^2 & -(\tilde{\lambda}_{1,\eps}+\tilde{\lambda}_{2,\eps})\\
      -(\tilde{\lambda}_{1,\eps}+\tilde{\lambda}_{2,\eps}) & 2 \\
           \end{array}
  \right) + 2\delta^2 \left(
    \begin{array}{cc}
      1 & 0\\
      0 & 0 \\
           \end{array}
  \right),
\]
becomes
\[
Q^{(2)}_{\delta}(\tilde{\lambda}_{1,\eps},\tilde{\lambda}_{2,\eps})=\left(
    \begin{array}{cc}
     2\sum_{i=1}^n a_{i,\eps}\xi_i^2\lara{\xi}^{-2} & 0\\[0.2cm]
      0 & 2 \\
           \end{array}
  \right)
   + 2\delta^2 \left(
    \begin{array}{cc}
      1 & 0\\
      0 & 0 \\
           \end{array}
  \right).
\]
Before proceeding with the energy estimates we need to recall some important properties of the quasi-symmetriser $Q^{(2)}_\delta(\lambda_\eps)$ which have been proven in \cite{GR:14} and will be employed in the rest of the paper.
\subsection{Properties of $Q^{(2)}_{\delta}(\lambda_\eps)$}
We begin by noting that all the properties in the appendix  that we need for the quasi-symmetriser, can be stated in presence of the additional parameter $\eps$.  
\begin{proposition}[Proposition 3.5, \cite{GR:14}]
\label{prop_qs_2}
Let $Q^{(2)}_\delta(\lambda_\eps)$ as defined above. Then,
\beq
\label{est_diag_below}
(Q^{(2)}_\delta(\lambda_\eps)V,V)\ge \frac{1}{8} {\rm diag}(Q^{(2)}_{\delta}(\lambda_\eps)V,V),\nonumber
\eeq
where ${\rm diag}\,Q^{(2)}_{\delta}(\lambda_\eps)$ is the diagonal part of the matrix $Q^{(2)}_\delta(\lambda_\eps)$. In addition, there exists a constant $C_2>0$ such that
\begin{itemize}
\item[(i)]  $C_2^{-1}\omega(\eps)^{2L}\delta^2 I\le Q^{(2)}_\delta(\lambda_\eps(t,\xi))\le C_2\omega(\eps)^{-2L} I$,\\
\item[(ii)] $|((Q^{(2)}_\delta(\lambda_\eps)A_\eps(t,\xi)-A_\eps(t,\xi)^\ast Q^{(2)}_\delta(\lambda_\eps))V,V)|\le C_2\delta(Q^{(2)}_\delta(\lambda_\eps)V,V)$,\\
\end{itemize}
for all $\delta>0$, $\eps\in(0,1]$, $t\in[0,T]$, $\xi\in\R^n$ and $V\in\C^2$.  
\end{proposition}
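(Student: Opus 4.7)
The plan is to exploit the fact that, in the setting of this paper, the eigenvalues of $A_\eps$ are $\pm$-paired ($\tilde\lambda_{2,\eps}=-\tilde\lambda_{1,\eps}$), so the quasi-symmetriser $Q^{(2)}_\delta(\lambda_\eps)$ collapses to the purely diagonal matrix displayed just before the proposition. This reduces every claim of the proposition to an elementary $2\times 2$ matrix computation, together with the moderateness estimate on the regularised coefficients $a_{i,\eps}$. Since the statement is essentially Proposition~3.5 of \cite{GR:14}, I would follow the structure of that proof and merely verify that the constants come out independently of $\eps$.

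\textbf{Nearly diagonal inequality and property (i).} Because $\tilde\lambda_{1,\eps}+\tilde\lambda_{2,\eps}=0$, the off-diagonal entries of $Q^{(2)}_\delta(\lambda_\eps)$ vanish identically, so $Q^{(2)}_\delta(\lambda_\eps)=\mathrm{diag}\,Q^{(2)}_\delta(\lambda_\eps)$ and the nearly diagonal bound holds trivially (with constant $1$, a fortiori with $1/8$). For the upper bound in (i), I would invoke the standard moderateness estimate for convolutions of compactly supported distributions with the mollifier $\psi_{\omega(\eps)}$: since $a_i\in\E'(\R)$, there exist $C>0$ and $L\in\N$ with $\|a_{i,\eps}\|_{L^\infty[0,T]}\le C\omega(\eps)^{-L}$ uniformly in $\eps$ (this is the content of Remark~\ref{rem_our_nets}). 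Coupled with $\xi_i^2\lara{\xi}^{-2}\le 1$ and $\delta^2\le 1$, this gives $\|Q^{(2)}_\delta(\lambda_\eps)\|\le C_2\omega(\eps)^{-2L}$. For the lower bound, both diagonal entries dominate $2\delta^2$ (the first from the $2\delta^2$ perturbation, the second from the constant $2$ together with $\delta\le 1$), so $Q^{(2)}_\delta(\lambda_\eps)\ge 2\delta^2 I$; since $\omega(\eps)\le c_1$ is bounded, this already implies $Q^{(2)}_\delta(\lambda_\eps)\ge C_2^{-1}\omega(\eps)^{2L}\delta^2 I$ after possibly enlarging $C_2$.

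\textbf{Commutator estimate (ii).} Writing $\alpha=\sum_{i=1}^n a_{i,\eps}\xi_i^2\lara{\xi}^{-2}$, a direct computation using $A_\eps$ in its Sylvester form and $Q^{(2)}_\delta(\lambda_\eps)=\mathrm{diag}(2\alpha+2\delta^2,\,2)$ gives
\[
Q^{(2)}_\delta(\lambda_\eps)\,A_\eps-A_\eps^\ast\, Q^{(2)}_\delta(\lambda_\eps)=2\delta^2\begin{pmatrix}0&1\\-1&0\end{pmatrix},
\]
the $\alpha$ contributions cancelling exactly. Consequently
\[
|((Q^{(2)}_\delta A_\eps-A_\eps^\ast Q^{(2)}_\delta)V,V)|=4\delta^2\,|\mathrm{Im}(V_1\overline{V_2})|\le 4\delta^2|V_1||V_2|.
\]
Since $(Q^{(2)}_\delta V,V)\ge 2\delta^2|V_1|^2+2|V_2|^2$, the arithmetic--geometric mean inequality yields $4\delta|V_1||V_2|\le (Q^{(2)}_\delta V,V)$, and hence $4\delta^2|V_1||V_2|\le \delta(Q^{(2)}_\delta V,V)$, which is exactly (ii) with $C_2=1$.

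The main (and essentially only) subtlety is ensuring that the moderate bound on $a_{i,\eps}$ is \emph{uniform} in $\eps\in(0,1]$, which follows from the positivity assumption $\omega(\eps)\ge c_2\eps^r$ in \eqref{pos_net} together with the structure theorem for compactly supported distributions. All other steps are either immediate consequences of the symmetry $\tilde\lambda_{2,\eps}=-\tilde\lambda_{1,\eps}$ or routine $2\times 2$ algebra; in the general Sylvester case treated in the appendix these same properties are far less transparent and require the full Kinoshita--Spagnolo machinery, whereas here the simplification $\tilde\lambda_1+\tilde\lambda_2=0$ does all the work.
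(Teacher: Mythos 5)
Your proof is correct, and it exploits a genuine structural simplification that the paper's cited reference (Proposition~3.5 of \cite{GR:14}) does not have available. In \cite{GR:14} the quasi-symmetriser carries first-order terms $-\sum_i b_{i,\eps}\xi_i\lara{\xi}^{-1}$ off the diagonal, so the near-diagonality with constant $1/8$, the two-sided bound (i), and the commutator estimate (ii) all require the full Kinoshita--Spagnolo machinery (determinant lower bounds, Lemma~\ref{lem_old}, etc.). In the present paper the $b_i$ coefficients are absent, so $\tilde\lambda_{1,\eps}+\tilde\lambda_{2,\eps}=0$, $Q^{(2)}_\delta(\lambda_\eps)$ is already diagonal, and your direct $2\times2$ computation reduces everything to elementary bounds. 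Your derivation of
\[
Q^{(2)}_\delta(\lambda_\eps)A_\eps-A_\eps^\ast Q^{(2)}_\delta(\lambda_\eps)=2\delta^2\begin{pmatrix}0&1\\-1&0\end{pmatrix}
\]
is correct, and the AM--GM step $4\delta|V_1||V_2|\le 2\delta^2|V_1|^2+2|V_2|^2\le(Q^{(2)}_\delta V,V)$ cleanly yields (ii) with constant~$1$. For (i), the upper bound $O(\omega(\eps)^{-L})$ you actually obtain is of course dominated by $C_2\,\omega(\eps)^{-2L}$ (since $\omega(\eps)\le c_1$), so the stated form follows; the exponent $2L$ in the proposition is inherited from the \cite{GR:14} version where the squared $b_{i,\eps}$ term forces it, and you are right that nothing that sharp is needed here.

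One small quibble of attribution: the uniform moderateness $\|a_{i,\eps}\|_\infty\le c\,\omega(\eps)^{-L}$ comes from the structure theorem for $\E'$ combined with the scaling of $\psi_{\omega(\eps)}$; the positivity condition $\omega(\eps)\ge c_2\eps^r$ in \eqref{pos_net} is only what lets one trade $\omega(\eps)$ for $\eps$ in such estimates, and it is not actually what makes the bound uniform. You should also say explicitly that $a_i\ge 0$ together with $\psi\ge 0$ guarantees $\alpha=\sum a_{i,\eps}\xi_i^2\lara{\xi}^{-2}\ge 0$, which is what makes the lower bound $Q^{(2)}_\delta\ge 2\delta^2 I$ legitimate. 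With those two clarifications the argument is complete and, in this special case, strictly simpler and more transparent than the general proof it replaces.
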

Taking inspiration from the motivating example in the previous section, we can now formulate some Levi conditions and prove that they allow to bound the lower order terms by means of the quasi-symmetriser $Q^{(2)}_\delta(\lambda_\eps)$. More precisely, we have the following proposition.

\begin{proposition}\label{prop_lot_bound}
Let $Q^{(2)}_\delta(\lambda_\eps)$ as defined above. Under the Levi conditions 
\begin{itemize}
\item[(LC)] $\exists C_{1,\eps},\,C_{2,\eps}>0$ such that for $t\in [0,T]$ and $\xi$ away from $0$ (i.e., for $|\xi|\ge R$ for some $R>0$ independent of $\eps$),
\begin{align*} 
\left|{\rm i}\sum_{i=1}^n c_{i,\eps}(t)\xi_{i}+e_{\eps}(t) \right|^2 &\le C_{1,\eps} \left(  2\sum_{i=1}^n a_{i,\eps}(t)\xi_i^2 \right) \\
\left| d_{\eps}(t)\right|^2&\le C_{2,\eps}, 
\end{align*}
\end{itemize}
there exists a constant $C_{\eps}>0$ such that the inequality 
\[
|((Q^{(2)}_{\delta,\eps} B_{\eps}-B_{\eps}^\ast Q^{(2)}_{\delta,\eps})V,V)| \le C_{\eps}(Q^{(2)}_\delta(\lambda_\eps)V,V),
\]
holds for all values of $t$, $\xi$, $\eps$, $\delta$ as above and for all $V\in \C^2$. 
Furthermore, $$C_{\eps}=4\sqrt{C_{1,\eps}+\gamma C_{2,\eps}},$$ for some constant $\gamma>0$ independent of $t\in[0,T]$, $\xi\in\R^n$ and $\eps\in(0,1]$.
\end{proposition}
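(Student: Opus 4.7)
The strategy mirrors the motivating example of the previous section, with $\sqrt{a_\eps}\,|\xi|$ replaced throughout by the scalar $\tilde a_\eps^{1/2}(t,\xi) := \bigl(\sum_{i=1}^n a_{i,\eps}(t)\xi_i^2\bigr)^{1/2}$. The first observation is that the $\delta^2$-correction $2\delta^2\,\mathrm{diag}(1,0)$ does not contribute to the commutator: since $B_\eps$ has vanishing first row and $\mathrm{diag}(1,0)$ is supported only on the $(1,1)$-entry, a direct $2\times 2$ computation gives $2\delta^2\mathrm{diag}(1,0) B_\eps - B_\eps^\ast\, 2\delta^2\mathrm{diag}(1,0) = 0$. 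Hence $Q^{(2)}_\delta(\lambda_\eps) B_\eps - B_\eps^\ast Q^{(2)}_\delta(\lambda_\eps) = Q^{(2)}_{0,\eps} B_\eps - B_\eps^\ast Q^{(2)}_{0,\eps}$, where $Q^{(2)}_{0,\eps}= \mathrm{diag}(2\tilde a_\eps \lara{\xi}^{-2},\,2)$, and it suffices to bound the latter by $C_\eps (Q^{(2)}_{0,\eps}V,V)$, which is dominated by $(Q^{(2)}_\delta(\lambda_\eps)V,V)$.

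Next I factor $Q^{(2)}_{0,\eps} = \mathcal{W}_\eps^\ast \mathcal{W}_\eps$ with
\[
\mathcal{W}_\eps=\begin{pmatrix} \tilde a_\eps^{1/2}\lara{\xi}^{-1} & 1 \\ -\tilde a_\eps^{1/2}\lara{\xi}^{-1} & 1 \end{pmatrix},
\]
so that $(Q^{(2)}_{0,\eps}V,V)=|\mathcal{W}_\eps V|^2$ and
\[
\bigl|((Q^{(2)}_{0,\eps} B_\eps - B_\eps^\ast Q^{(2)}_{0,\eps})V,V)\bigr| = |2{\rm i}\,\Im(\mathcal{W}_\eps B_\eps V,\mathcal{W}_\eps V)| \le 2\,|\mathcal{W}_\eps B_\eps V|\,|\mathcal{W}_\eps V|.
\]
A direct computation using the form of $B_\eps$ yields
\[
|\mathcal{W}_\eps B_\eps V|^2 = 2\,\bigl|({\rm i}\textstyle\sum_{i=1}^n c_{i,\eps}\xi_i + e_\eps)\lara{\xi}^{-1} V_1 + {\rm i}\, d_\eps V_2\bigr|^2,
\]
and by the elementary inequality $|x+y|^2\le 2|x|^2+2|y|^2$ it is enough to prove the existence of a constant $C_\eps>0$ such that
\[
4\Bigl(\bigl|({\rm i}\textstyle\sum_{i=1}^n c_{i,\eps}\xi_i + e_\eps)\lara{\xi}^{-1}\bigr|^2 |V_1|^2 + |d_\eps|^2 |V_2|^2\Bigr) \le \tfrac{C_\eps^2}{4}\,|\mathcal{W}_\eps V|^2,
\]
uniformly in $t,\xi,V$ and $\eps$.

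To control $|\mathcal{W}_\eps V|^2$ in terms of $|V_1|^2$ and $|V_2|^2$ separately, I split $\C^2$ into the two areas used in the motivating example: with a fixed parameter $\gamma>0$,
\[
\text{Area 1: }\ |V_2|^2 \le 2\gamma\, \tilde a_\eps\lara{\xi}^{-2}|V_1|^2, \qquad \text{Area 2: }\ |V_2|^2 > 2\gamma\, \tilde a_\eps\lara{\xi}^{-2}|V_1|^2.
\]
In Area 1, the $|V_2|^2$-term is absorbed into the $|V_1|^2$-term, and the first Levi condition (LC) gives the claim with $\tfrac{C_\eps^2}{16}=C_{1,\eps}+\gamma C_{2,\eps}$, regardless of $\gamma$. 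In Area 2, I bound the left-hand side by $4\max(C_{1,\eps},C_{2,\eps})(1+1/(2\gamma))|V_2|^2$ using (LC), while on the right I estimate $|\pm\tilde a_\eps^{1/2}\lara{\xi}^{-1}V_1+V_2|\ge |V_2|-|\tilde a_\eps^{1/2}\lara{\xi}^{-1}V_1|\ge(1-(2\gamma)^{-1/2})|V_2|$, so that $|\mathcal{W}_\eps V|^2 \ge 2(1-(2\gamma)^{-1/2})^2 |V_2|^2$.

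The main technical point is the Area-2 step: one must fix $\gamma>0$, independently of $\eps,t,\xi$, large enough that $2(1-(2\gamma)^{-1/2})^2\ge 1$ and simultaneously that the same $\tfrac{C_\eps^2}{16}=C_{1,\eps}+\gamma C_{2,\eps}$ coming from Area 1 also dominates the Area-2 quantity $4\max(C_{1,\eps},C_{2,\eps})(1+1/(2\gamma))$. Once such a $\gamma$ is chosen, the two areas combine to yield the claimed inequality with $C_\eps=4\sqrt{C_{1,\eps}+\gamma C_{2,\eps}}$, which is exactly the constant stated in the proposition.
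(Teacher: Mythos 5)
Your proposal is correct and follows essentially the same path as the paper's proof: drop the $\delta^2$-correction from the commutator, factor $Q^{(2)}_{0,\eps}$ through $\mathcal{W}_\eps$, split $\C^2$ into Area 1/Area 2, and choose a universal $\gamma$ so that the Area~1 constant $C_\eps^2/4=4C_{1,\eps}+4\gamma C_{2,\eps}$ also dominates in Area~2. The one minor deviation is how you kill the $\delta^2$-term: you do a direct $2\times2$ computation showing $\mathrm{diag}(1,0)B_\eps=0$ and $B_\eps^\ast\mathrm{diag}(1,0)=0$, whereas the paper invokes the general decomposition $Q^{(2)}_\delta=Q^{(2)}_0+\delta^2\sum_i Q^{(1)}(\pi_i\lambda)^\sharp$ from Proposition~\ref{prop_qs}(iv) and observes the sharp terms commute with $B_\eps$; for $m=2$ these are literally the same observation, so this is only a cosmetic difference in presentation.
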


\begin{proof}
In this proof we combine the arguments of Section 4.3 in \cite{GR:12} at the $\eps$-level with Proposition \ref{prop_qs}(iv). For the quasi-symmetriser $Q^{(2)}_\delta(\lambda_\eps)$ we use the shorter notation $Q^{(2)}_{\delta,\eps}$. We have 
\begin{align*}
((Q^{(2)}_{\delta,\eps} B_{\eps}-B_{\eps}^\ast Q^{(2)}_{\delta,\eps})V,V)=&((Q^{(2)}_{0,\eps} B_{\eps}-B_{\eps}^\ast Q^{(2)}_{0,\eps})V,V)\\
&+\delta^2\sum_{i=1}^2((Q^{(1)}_{\delta}(\pi_i\lambda_\eps)^\sharp B_{\eps}-B_{\eps}^\ast Q^{(1)}_{\delta}(\pi_i\lambda_\eps)^\sharp)V,V),
\end{align*}
where for the definition of $Q^{(1)}_{\delta}(\pi_i\lambda_\eps)^\sharp$ we refer to Proposition \ref{prop_qs}(iv). From the structure of  $B_{\eps}$ and $Q^{(1)}_{\delta}(\pi_i\lambda_\eps)^\sharp$, we notice that
$(Q^{(1)}_{\delta}(\pi_i\lambda_\eps)^\sharp B_{\eps}-B_{\eps}^\ast Q^{(1)}_{\delta}(\pi_i\lambda_\eps)^\sharp)=0$. Hence
\[
|((Q^{(2)}_{\delta,\eps} B_{\eps}-B_{\eps}^\ast Q^{(2)}_{\delta,\eps})V,V)|=|((Q^{(2)}_{0,\eps} B_{\eps}-B_{\eps}^\ast Q^{(2)}_{0,\eps})V,V)|.
\]

We now note that from Proposition \ref{prop_qs}(i), we have that $(Q^{(2)}_{0,\eps} V,V)\le (Q^{(2)}_{\delta,\eps}V,V)$. Hence, if we can show that
\beq \label{cB}
|((Q^{(2)}_{0,\eps} B_{\eps}-B_{\eps}^\ast Q^{(2)}_{0,\eps})V,V)|\le C_{\eps} (Q^{(2)}_{0,\eps} V,V),
\eeq
for some constant $C_{\eps}>0$ independent of $t\in[0,T]$, $\xi\in\R^n$ and $V\in\C^2$, then the proof is concluded.

It therefore remains to show \eqref{cB}, under the Levi conditions (LC) on the lower order terms. Following Section 5 in \cite{GR:12} we first write $((Q^{(2)}_{0,\eps} B_{\eps}-B_{\eps}^\ast Q^{(2)}_{0,\eps})V,V)$ in terms of the matrix 
\[
\mathcal{W}_{\eps}=\left(
    \begin{array}{cc}
      -\tilde{\lambda}_{2,\eps} & 1\\
      -\tilde{\lambda}_{1,\eps} & 1\\
      \end{array}
  \right)  
 =\left(
    \begin{array}{cc}
      -\sqrt{\sum_{i=1}^n a_{i,\eps}\xi_i^2\lara{\xi}^{-2}} & 1\\
      +\sqrt{\sum_{i=1}^n a_{i,\eps}\xi_i^2\lara{\xi}^{-2}} & 1\\
      \end{array}
  \right).
\]
From Proposition \ref{prop_qs}(v) we have that, 

\[
((Q^{(2)}_{0,\eps} B_{\eps}-B_{\eps}^\ast Q^{(2)}_{0,\eps})V,V)=((\mathcal{W}_{\eps}B_{\eps}V,\mathcal{W}_{\eps}V)-(\mathcal{W}_{\eps}V,\mathcal{W}_{\eps}B_{\eps}V))
=2{\rm i}\Im (\mathcal{W}_{\eps}B_{\eps}V,\mathcal{W}_{\eps}V).
\]
It follows by the Cauchy-Schwartz inequality that
\[
|((Q^{(2)}_{0,\eps} B_{\eps}-B_{\eps}^\ast Q^{(2)}_{0,\eps})V,V)|\le 2|\mathcal{W}_{\eps}B_{\eps}V||\mathcal{W}_{\eps}V|.
\]
Again from Proposition \ref{prop_qs}(v) we get
\[
(Q^{(2)}_{0,\eps} V,V)=|\mathcal{W}_{\eps}V|^2.
\]
We therefore have that if
\beq
\label{cB2}
|\mathcal{W}_{\eps}B_{\eps}V|\le \frac{C_{\eps}}{2}|\mathcal{W}_{\eps}V| \nonumber
\eeq
for some constant $ C_{\eps}>0$ independent of $t$, $\xi$ and $V$, then the condition \eqref{cB} will hold. 


By definition of $\mathcal{W}_{\eps}$ and straightforward computations we have,
%
\begin{align*}
\mathcal{W}_{\eps}B_{\eps}V
  &=\mathcal{W}_{\eps}
 \left(
    \begin{array}{ccccc}
      0 & 0 \\
      \left({\rm i}\sum_{i=1}^n c_{i,\eps} \xi_i+e_{\eps} \right)\lara{\xi}^{-1} & {\rm i}d_{\eps} \\
    \end{array}
  \right) 
  \left(
    \begin{array}{c}
      V_{1}\\
      V_{2} \\
           \end{array}
  \right) \\
  &=\left(
    \begin{array}{c}
      \left({\rm i}\sum_{i=1}^n c_{i,\eps} \xi_i+e_{\eps} \right)\lara{\xi}^{-1}V_{1} +{\rm i}d_{\eps}V_{2}\\
      \left({\rm i}\sum_{i=1}^n c_{i,\eps} \xi_i+e_{\eps} \right)\lara{\xi}^{-1}V_{1} +{\rm i}d_{\eps}V_{2}\\
      \end{array}
  \right).
\end{align*}
Therefore,
\[
|\mathcal{W}_{\eps}B_{\eps}V|^2=2\left|\left(\sum_{i=1}^n{\rm i} c_{i,\eps} \xi_i+e_{\eps} \right)\lara{\xi}^{-1}V_{1} +{\rm i}d_{\eps}V_{2}\right|^2.
\]
We also have that
\[
\mathcal{W}_{\eps}V
  =\left(
    \begin{array}{cc}
      +\sqrt{\sum_{i=1}^n a_{i,\eps}\xi_i^2\lara{\xi}^{-2}}V_{1}+V_{2} \\
      -\sqrt{\sum_{i=1}^n a_{i,\eps}\xi_i^2\lara{\xi}^{-2}}V_{1}+V_{2}\\
      \end{array}
  \right)
\]
and hence
\begin{align*}
|\mathcal{W}_{\eps}V|^2&=\left|\sqrt{\sum_{i=1}^n a_{i,\eps}\xi_i^2\lara{\xi}^{-2}}V_{1}+V_{2} \right|^2
+ \left|-\sqrt{\sum_{i=1}^n a_{i,\eps}\xi_i^2\lara{\xi}^{-2}}V_{1}+V_{2}\right|^2.
\end{align*} 
It therefore suffices to show that 
\begin{align} \label{m21_general}
&|\mathcal{W}_\eps B_\eps V|^2 
=2\left|\left(\sum_{i=1}^n{\rm i} c_{i,\eps} \xi_i+e_{\eps} \right)\lara{\xi}^{-1}V_{1} +{\rm i}d_{\eps}V_{2}\right|^2\\ \nonumber
 &\le \frac{C_{\eps}^2}{4} \left( \left|\sqrt{\sum_{i=1}^n a_{i,\eps}\xi_i^2\lara{\xi}^{-2}}V_{1}+V_{2} \right|^2
+ \left|-\sqrt{\sum_{i=1}^n a_{i,\eps}\xi_i^2\lara{\xi}^{-2}}V_{1}+V_{2}\right|^2 \right).
\end{align}

Using the inequality $\frac{1}{2}|x-y|^2\le |x|^2+|y|^2$ we have that
\begin{align*}
 &\left|\sqrt{\sum_{i=1}^n a_{i,\eps}\xi_i^2\lara{\xi}^{-2}}V_{1}+V_{2} \right|^2
+ \left|-\sqrt{\sum_{i=1}^n a_{i,\eps}\xi_i^2\lara{\xi}^{-2}}V_{1}+V_{2}\right|^2 \\ 
\geq {} & \frac{1}{2}\left|2\sqrt{\sum_{i=1}^n a_{i,\eps}\xi_i^2\lara{\xi}^{-2}} V_{1}\right|^2 
= 2\left(\sum_{i=1}^n a_{i,\eps}\xi_i^2\lara{\xi}^{-2}\right)|V_1|^2.
\end{align*}

On the other hand
\[
\left|\left({\rm i}\sum_{i=1}^n c_{i,\eps} \xi_i+e_{\eps} \right)\lara{\xi}^{-1}V_{1} +{\rm i}d_{\eps}V_{2}\right|^2 \leq 2 \left(\left|\left({\rm i}\sum_{i=1}^n c_{i,\eps} \xi_i+e_{\eps} \right)\right|^2\lara{\xi}^{-2}|V_1|^2+|d_{\eps}|^2|V_2|^2\right)
\]
so we want to show the inequality
\begin{multline}
\label{m22_general}
4\left(\left|\left({\rm i}\sum_{i=1}^n c_{i,\eps} \xi_i+e_{\eps} \right)\right|^2\lara{\xi}^{-2}|V_1|^2+|d_{\eps}|^2|V_2|^2\right) \le \frac{C_{\eps}^2}{4} \left(2\sum_{i=1}^n a_{i,\eps}\xi_i^2\lara{\xi}^{-2}\right)|V_1|^2.
\end{multline}

We argue in two different areas as in the motivating example.

{\bf Area 1.} Let $|V_2|^2\le  \gamma  \left(2\sum_{i=1}^n a_{i,\eps}\xi_i^2\lara{\xi}^{-2}\right)|V_1|^2$ with $\gamma>0$. Note that if
\beq
\label{B1B2_general}
4\left(\left|\left({\rm i}\sum_{i=1}^n c_{i,\eps} \xi_i+e_{\eps} \right)\right|^2\lara{\xi}^{-2}
+ \gamma  \left(2\sum_{i=1}^n a_{i,\eps}\xi_i^2\lara{\xi}^{-2}\right) |d_{\eps}|^2 \right) 
\le \frac{C_{\eps}^2}{4} \left(2\sum_{i=1}^n a_{i,\eps}\xi_i^2\lara{\xi}^{-2}\right)
\eeq
then \eqref{m22_general} holds. Using the Levi conditions (LC) and that $a_{i,\eps}\ge0$, we have that \eqref{B1B2_general} holds independently of $\gamma>0$ with $\frac{C_{\eps}^2}{4}=4C_{1,\eps}+4\gamma C_{2,\eps}$.

{\bf Area 2.} Let  $|V_2|^2 >  \gamma  \left(2\sum_{i=1}^n a_{i,\eps}\xi_i^2\lara{\xi}^{-2}\right)|V_1|^2$. Using the Levi conditions (LC),
\begin{align*}
&4\left(\left|\left({\rm i}\sum_{i=1}^n c_{i,\eps} \xi_i+e_{\eps} \right)\right|^2\lara{\xi}^{-2}|V_1|^2+|d_{\eps}|^2|V_2|^2\right) \\
&\le 4\left( C_{1,\eps} \left( 2\sum_{i=1}^n a_{i,\eps}(t)\xi_i^2 \right)\lara{\xi}^{-2}|V_1|^2+C_{2,\eps}|V_2|^2 \right)\\
&\le 4(C_{2,\eps}+\frac{C_{1,\eps}}{\gamma})|V_2|^2 \le 4\max(C_{1,\eps},C_{2,\eps})(1+\frac{1}{\gamma})|V_2|^2.
\end{align*}
Note that 
\[
|V_1|\left|\sqrt{\sum_{i=1}^n a_{i,\eps}\xi_i^2\lara{\xi}^{-2}}\right|<\frac{1}{\sqrt{2\gamma}} |V_2|.
\] 

We therefore have that
\begin{align*}
&\frac{C_{\eps}^2}{4} \left( \left|\biggl(\sqrt{\sum_{i=1}^n a_{i,\eps}\xi_i^2\lara{\xi}^{-2}}\biggl) V_{1}+V_{2} \right|^2 + \left|\biggl(-\sqrt{\sum_{i=1}^n a_{i,\eps}\xi_i^2\lara{\xi}^{-2}}\biggl) V_{1}+V_{2} \right|^2 \right) \\ 
&\ge \frac{C_{\eps}^2}{4} \left( \left(|V_2|-\left|\biggl(\sqrt{\sum_{i=1}^n a_{i,\eps}\xi_i^2\lara{\xi}^{-2}}\biggl) V_1\right| \right)^2 + \left(|V_2|-\left|\biggl(-\sqrt{\sum_{i=1}^n a_{i,\eps}\xi_i^2\lara{\xi}^{-2}}\biggl) V_1\right| \right)^2 \right) \\
& \ge 2\frac{C_{\eps}^2}{4}(1-\frac{1}{\sqrt{2\gamma}} )^2|V_2|^2\ge \frac{C_{\eps}^2}{4}|V_2|^2 .
\end{align*}
We conclude that \eqref{m21_general} holds  if $\gamma$ is big enough with $\frac{C_{\eps}^2}{4}\geq 4 \max( C_{1,\eps},C_{2,\eps})$. Note that we can choose $C_{\eps}^2$ as in Area 1 and that $\gamma$ is independent of $\eps$ and hence it is a constant once it is fixed. Concluding, $\frac{C_{\eps}^2}{4}=4C_{1,\eps}+4\gamma C_{2,\eps}$ and hence $C_{\eps}=4\sqrt{C_{1,\eps}+\gamma C_{2,\eps}}$.

\end{proof}

\subsection{Energy estimates and very weak solutions}
In the rest of this section we will prove the under the hypotheses of Case 1 our Cauchy problem has a very weak solution.
\begin{theorem}
\label{theo_main_case_1}
Let us consider the Cauchy problem
\[
\begin{split}
\partial_t^2u(t,x)-\sum_{i=1}^n a_i(t)\partial_{x_i}^2u(t,x)+l(t,\partial_t,\partial_x)u(t,x)&=f(t,x),\\
u(0,x)&=g_0(x),\\
\partial_t u(0,x)&=g_1(x),\\
\end{split}
\]
where $t\in[0,T]$, $x\in\R^n$ and 
\[
l(t,\partial_t,\partial_x)=\sum_{i=1}^n c_i(t)\partial_{x_i}+d(t)\partial_t+e(t).
\]
Assume the following set of hypotheses for $s\ge1$ (Case 1):
\begin{itemize}
\item[(i)] the coefficients $a_i$  are real-valued distributions with compact support contained in $[0,T]$ with $a_i\ge 0$ for all $t\in[0,T]$,
 \item[(ii)] $f\in C([0,T], \gamma_c^s(\R^n))$,
\item[(iii)] $g_0,g_1\in \gamma_c^s(\R^n)$.
\end{itemize}
If the equation coefficients are regularised with a scale of logarithmic type and the following Levi conditions 
\begin{align*} 
\left|{\rm i}\sum_{i=1}^n c_{i,\eps}(t)\xi_{i}+e_{\eps}(t) \right|^2 &\le c_1(\ln(\eps^{-1}))^2 \left(  2\sum_{i=1}^n a_{i,\eps}(t)\xi_i^2 \right) \\
\left| d_{\eps}(t)\right|^2&\le c_2(\ln(\eps^{-1}))^2, 
\end{align*}
are fulfilled for $\eps\in(0,1]$ and for $|\xi|$ large enough, then the Cauchy problem has a very weak solution of order $s$.
\end{theorem}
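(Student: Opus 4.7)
The plan is to mimic the energy argument of Theorem \ref{theo_GRi} at the regularised level, combine it with the lower-order-term bound of Proposition \ref{prop_lot_bound} and the logarithmic regularisation scheme of \cite{GR:14}, and then read off Gevrey-moderate bounds on the Fourier side. First I would regularise the singular coefficients $a_i, c_i, d, e$ by convolution with a type-(1) mollifier $\psi_{\omega(\eps)}$ at the scale
\[
\omega(\eps)^{-1} = c(\ln \eps^{-1})^r,
\]
with $r$ to be fixed later in the optimisation, and I would regularise the Gevrey data $g_0, g_1$ and the right-hand side $f(t,\cdot)$ by convolution (in $x$) with a type-(2) mollifier $\varphi_\eps$. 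This produces $C^\infty$-moderate coefficients, $\gamma^s$-moderate initial data, and a $C([0,T]; \gamma^s_c(\R^n))$-moderate source. Passing to $u_{1,\eps} = \lara{D_x} u_\eps$, $u_{2,\eps} = D_t u_\eps$ and Fourier transforming in $x$ gives the system \eqref{system_new_general}.

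Next I would work with the quasi-symmetriser energy $E_{\delta,\eps}(t,\xi) = (Q^{(2)}_{\delta,\eps} V_\eps, V_\eps)$ and differentiate in $t$ as in \eqref{EQ:energy_deriv}, producing four contributions: the derivative of the symmetriser, the principal-part commutator with $A_\eps$, the lower-order commutator with $B_\eps$, and a new inhomogeneous piece involving $\widehat{F}_\eps$. The first two contributions are handled exactly as in Sections~4.2--4.3 of \cite{GR:14}, yielding a factor $K_{\delta,\eps}(t,\xi) + C\delta\lara{\xi}$ with
\[
\int_0^T K_{\delta,\eps}(t,\xi)\, dt \le C\delta^{-2/k}\omega(\eps)^{-1}.
\]
Proposition \ref{prop_lot_bound} controls the $B_\eps$-commutator by $C_\eps E_{\delta,\eps}$ with $C_\eps = 4\sqrt{C_{1,\eps} + \gamma C_{2,\eps}}$, while the source is absorbed exactly as in Theorem \ref{theo_GRi}: the nearly-diagonal property of $Q^{(2)}_{\delta,\eps}$ combined with $a_{i,\eps}\ge 0$ gives $E_{\delta,\eps}\ge 2|V_{2,\eps}|^2$, hence
\[
|(Q^{(2)}_{\delta,\eps}\widehat{F}_\eps, V_\eps) - (Q^{(2)}_{\delta,\eps}V_\eps, \widehat{F}_\eps)| \le 2|\widehat{F}_\eps|^2 + E_{\delta,\eps}.
\]

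Gronwall's lemma then produces
\[
E_{\delta,\eps}(t,\xi) \le \bigl( E_{\delta,\eps}(0,\xi) + 2T\sup_{t\in[0,T]}|\widehat{F}_\eps(t,\xi)|^2 \bigr) \esp^{C(\delta^{-2/k}\omega(\eps)^{-1} + \delta\lara{\xi} + TC_\eps)}.
\]
As in \cite{GR:12, GR:14} I would optimise by setting $\delta^{-2/k} = \delta\lara{\xi}$, which fixes $\delta^{-2/k} = \lara{\xi}^{1/\sigma}$ with $\sigma = 1 + k/2$. The assumed Levi bounds $C_{j,\eps} = O((\ln\eps^{-1})^2)$ give $C_\eps = O(\ln\eps^{-1})$, hence $\esp^{TC_\eps} = O(\eps^{-N})$. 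Choosing
\[
r = \frac{1/s - 1/\sigma}{1/s}, \qquad 1 < s < \sigma = 1 + \frac{k}{2},
\]
allows the $\omega(\eps)^{-1}\lara{\xi}^{1/\sigma}$ term in the exponent to be absorbed by half of the Gevrey decay factor $\esp^{-c\lara{\xi}^{1/s}}$ arising from both $E_{\delta,\eps}(0,\xi)$ and $\sup_t|\widehat{F}_\eps(t,\xi)|^2$, which are valid Gevrey bounds since $g_0, g_1, f(t,\cdot) \in \gamma^s_c(\R^n)$ by Proposition \ref{prop_reg_gevrey_sim}(iii). The outcome is a pointwise estimate of the form
\[
|V_\eps(t,\xi)| \le c'\eps^{-N}\esp^{-c'\lara{\xi}^{1/s}},
\]
uniformly in $t\in[0,T]$. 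Proposition \ref{prop_reg_gevrey_mod}(iii), applied together with standard differentiation in $t$ of the ODE system, then yields the $C^\infty([0,T]; \gamma^s(\R^n))$-moderateness of $(u_\eps)_\eps$ required by Definition \ref{def_vws}.

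The principal obstacle is the tight coupling between the three scales $\delta$, $\omega(\eps)$ and $\lara{\xi}$: the logarithmic power $(\ln\eps^{-1})^2$ in the Levi hypothesis is exactly the threshold at which $C_\eps = O(\ln\eps^{-1})$ and hence $\esp^{TC_\eps}$ stays polynomial in $\eps^{-1}$, preserving moderateness; any faster growth of $C_{j,\eps}$ would break the argument. Similarly, the logarithmic scale $\omega(\eps)^{-1}$ must be chosen with precisely the exponent $r$ above in order to balance $K_{\delta,\eps}$ against the available Gevrey decay $\esp^{-c\lara{\xi}^{1/s}}$. A minor secondary point is verifying that $\sup_{t\in[0,T]}|\widehat{F}_\eps(t,\xi)|^2$ retains the Gevrey decay $\esp^{-c\lara{\xi}^{1/s}}$ with constants uniform in $t$; this follows from $f\in C([0,T];\gamma^s_c(\R^n))$ and the type-(2) mollification in $x$.
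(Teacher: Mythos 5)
Your proposal follows exactly the same route as the paper's proof: regularise with a type-(1) mollifier at a logarithmic scale and a type-(2) mollifier in $x$, reduce to the first-order system \eqref{system_new_general}, differentiate the quasi-symmetriser energy, control the four resulting contributions (using Section 4.2 of \cite{GR:14} for the principal part, Proposition \ref{prop_lot_bound} for the lower-order commutator, and the nearly-diagonal property plus $a_{i,\eps}\ge 0$ for the source), apply Gronwall, optimise $\delta^{-2/k}=\delta\lara{\xi}$, and absorb the remaining exponential factors using the Gevrey decay of $V_\eps(0,\xi)$ and $\widehat{F}_\eps$. The only discrepancy is cosmetic: the paper's bound on $\int_0^T K_{\delta,\eps}$ carries the exponent $\omega(\eps)^{-3L/k-1}$ rather than $\omega(\eps)^{-1}$, but since both are fixed positive powers of the logarithmic scale this has no effect on the argument.
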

\begin{remark}
Note that with scale of logarithmic type we mean $\omega^{-1}(\eps)=\ln(\eps^{-1})^{r}$ for some $r>0$. The regularisation is therefore given via convolution with $\psi_{\omega(\eps)}$, where $\psi$ is a mollifier of type $(1)$, for the equation coefficients and with $\varphi_{\omega(\eps)}$, where $\varphi$ is a mollifier of type (2) for initial data and right-hand side.
\end{remark}

\begin{proof}[Proof of Theorem \ref{theo_main_case_1}]
 
We work on the regularised problem 
\[
\begin{split}
\partial_t^2u_{\eps}(t,x)-\sum_{i=1}^n a_{i,\eps}(t)\partial_{x_i}^2u_{\eps}(t,x)+l_\eps(t,\partial_t,\partial_x)u_{\eps}(t,x)&=f_{\eps}(t,x),\\
u_{\eps}(0,x)&=g_{0,\eps}(x),\\
\partial_t u_{\eps}(0,x)&=g_{1,\eps}(x),\\
\end{split}
\]
where $t\in[0,T]$, $x\in\R^n$ and 
\[
l_\eps(t,\partial_t,\partial_x)=\sum_{i=1}^n c_{i,\eps}(t)\partial_{x_i}+d_\eps(t)\partial_t+e_\eps(t).
\]
Given mollifiers $\psi\ge 0$ in $C^\infty_c(\R)$  and $\varphi\in \S(\R^n)$ of type (1) and type (2), respectively, we set
\[
\begin{split}
&a_{i,\eps}= a_i\ast \psi_{\omega(\eps)},\,
c_{i,\eps}= c_i\ast \psi_{\nu(\eps)},\,
d_\eps= d\ast \psi_{\nu(\eps)},\,
e_\eps= e\ast \psi_{\nu(\eps)},\\
&f_\eps = f(t,\cdot)\ast\varphi_\eps,\,
g_{0,\eps}=g_0\ast\varphi_\eps,\,
g_{1,\eps}=g_1\ast\varphi_\eps.
\end{split}
\]
After regularisation, for fixed $\eps$ the Cauchy problem above fulfils the hypotheses of Theorem \ref{theo_GRi} so we know that there exists a net of solutions $(u_\eps)_\eps$ with $u_\eps\in C^2([0,T], \gamma^s(\R^n)$. This is true for every $s$ because after regularisation the coefficients are smooth. To prove that we have a very weak solution we need to show that, choosing the scales $\omega(\eps)$ and $\nu(\eps)$ suitably, then the net $(u_\eps)_\eps$ is moderate. We will achieve this by writing an $\eps$-version of the proof of Theorem \ref{theo_GRi} for the system \eqref{system_new_V}. We begin by defining the energy 
\[
E_{\delta,\eps}(t,\xi)\coloneqq(Q^{(2)}_{\delta,\eps}(t,\xi) V(t,\xi), V(t,\xi)).
\] 
We hence have
\begin{align} \label{EQ:energy_deriv_general}
\partial_t E_{\delta,\eps}(t,\xi)=&(\partial_tQ^{(2)}_{\delta,\eps} V,V)+ (Q^{(2)}_{\delta,\eps} \partial_tV,V)+(Q^{(2)}_{\delta,\eps} V,\partial_tV) \\ \nonumber
=&(\partial_tQ^{(2)}_{\delta,\eps} V,V)+ {\rm i}(Q^{(2)}_{\delta,\eps} D_tV,V)-{\rm i}(Q^{(2)}_{\delta,\eps} V,D_tV)\\ \nonumber
=&(\partial_tQ^{(2)}_{\delta,\eps} V,V)+{\rm i}(Q^{(2)}_{\delta,\eps}(\lara{\xi}A_{\eps}+B_{\eps})V-\widehat{F}_{\eps},V)\\ \nonumber
&-{\rm i}(Q^{(2)}_{\delta,\eps} V,(\lara{\xi}A_{\eps}+B_{\eps})V-\widehat{F}_{\eps})\\ \nonumber
=&(\partial_tQ^{(2)}_{\delta,\eps} V,V)+{\rm i}\lara{\xi}((Q^{(2)}_{\delta,\eps} A_\eps-A_\eps^\ast Q^{(2)}_{\delta,\eps})V,V)\\ \nonumber
&+{\rm i}((Q^{(2)}_{\delta,\eps} B_{\eps}-B_{\eps}^\ast Q^{(2)}_{\delta,\eps})V,V)-{\rm i}(Q^{(2)}_{\delta,\eps} \widehat{F}_{\eps},V)+ {\rm i}(Q^{(2)}_{\delta,\eps} V,\widehat{F}_{\eps}).
\end{align} 
Since our solution depends on the parameter $\eps$, we will use the notation $V_{\eps}$ from now on. From \eqref{EQ:energy_deriv_general} we have that 

\begin{align*} \label{EQ:energy_deriv_2}
\partial_t E_{\delta,\eps}(t,\xi) \le& |(\partial_tQ^{(2)}_{\delta,\eps} V_{\eps},V_{\eps})|+|\lara{\xi}((Q^{(2)}_{\delta,\eps} A_\eps-A_\eps^\ast Q^{(2)}_{\delta,\eps})V_{\eps},V_{\eps})|\\ \nonumber
&+|((Q^{(2)}_{\delta,\eps} B_{\eps}-B_{\eps}^\ast Q^{(2)}_{\delta,\eps})V_{\eps},V_{\eps})|+|(Q^{(2)}_{\delta,\eps} \widehat{F}_{\eps},V_{\eps})- (Q^{(2)}_{\delta,\eps} V_{\eps},\widehat{F}_{\eps})|.
\end{align*}
To estimate the first two terms in the right-hand side above, we follow the arguments of Section 4.2 in \cite{GR:14}. For the next two terms we employ Propositions \ref{prop_qs_2} and \ref{prop_lot_bound}. We get
\begin{equation}\label{est-rev_general}
\partial_t E_{\delta,\eps}(t,\xi) \le (K_{\delta,\eps}(t,\xi)+C_2\delta\lara{\xi}+C_{\eps})E_{\delta,\eps}(t,\xi)+|(Q^{(2)}_{\delta,\eps} \widehat{F}_{\eps},V_{\eps})- (Q^{(2)}_{\delta,\eps} V_{\eps},\widehat{F}_{\eps})|,
\end{equation}
where $K_{\delta,\eps}(t,\xi)$ has the property

\[
\int_0^T K_{\delta,\eps}(t,\xi)\, dt \le C_1\delta^{-\frac{2}{k}}\omega(\eps)^{-\frac{3L}{k}-1},
\]
with $k$ and $L$ depending on $a_i$ and $b_i$ and $C_1$, $C_2$ positive constants and the net $C_{\eps}$ depending on the Levi conditions on the lower order terms and defined as in Proposition \ref{prop_lot_bound}.
Note that 
\begin{align} 
\label{RHS_estimate}
|(Q^{(2)}_{\delta,\eps} \widehat{F}_{\eps},V_{\eps})- (Q^{(2)}_{\delta,\eps} V_{\eps},\widehat{F}_{\eps})| 
	&= |2\overline{V}_{2,\eps}\widehat{f_{\eps}}-2V_{2,\eps}\overline{\widehat{f_{\eps}}}| 
	\le 4|V_{2,\eps}||\widehat{f_{\eps}}| \nonumber\\
	&\le 2(|V_{2,\eps}|^2+|\widehat{F}_{\eps}|^2)
\end{align}
where in the last step we used that the quasi-symmetriser is a family of (nearly) diagonal Hermitian matrices. Hence 
\begin{align*}
E_{\delta,\eps}(t,\xi)&=(Q^{(2)}_{\delta,\eps} V_{\eps},V_{\eps}) =  (\text{diag }Q^{(2)}_{\delta,\eps} V_{\eps},V_{\eps}) \\
	&=\left(2\sum_{i=1}^n a_{i,\eps}\xi_i^2\lara{\xi}^{-2}+2\delta^2\right)|V_{1,\eps}|^2	+2|V_{2,\eps}|^2
	\ge 2|V_{2,\eps}|^2
\end{align*}
since $ a_i\ge0$. 
Using these inequalities in \eqref{RHS_estimate} we get that
\begin{align*}
|(Q^{(2)}_{\delta,\eps} \widehat{F}_{\eps},V_{\eps})- (Q^{(2)}_{\delta,\eps} V_{\eps},\widehat{F}_{\eps})| 	
	&\le 2|\widehat{F}_{\eps}|^2 + E_{\delta,\eps}(t,\xi).
\end{align*}

Therefore \eqref{est-rev_general} becomes
\begin{align*}
\partial_t E_{\delta,\eps}(t,\xi) \le (K_{\delta,\eps}(t,\xi)+C_2\delta\lara{\xi}+C_{\eps}+1)E_{\delta,\eps}(t,\xi)+2\underset{t \in [0,T]}{\sup}|\widehat{F}_{\eps}(t,\xi)|^2 .
\end{align*}

By Gronwall's lemma we obtain 
\begin{align*}
E_{\delta,\eps}(t,\xi)&\le \left( E_{\delta,\eps}(0,\xi)+2T\underset{t \in [0,T]}{\sup}|\widehat{F}_{\eps}(t,\xi)|^2 \right) \esp^{C_1\delta^{-\frac{2}{k}}\omega(\eps)^{-\frac{3L}{k}-1}+C_2T\delta \lara{\xi}+TC_{\eps}+T} \\
	&\le \left(  E_{\delta,\eps}(0,\xi)+2T\underset{t \in [0,T]}{\sup}|\widehat{F}_{\eps}(t,\xi)|^2 \right) \esp^{C_T(\delta^{-\frac{2}{k}}\omega(\eps)^{-\frac{3L}{k}-1}+\delta \lara{\xi}+C_{\eps}+1)}.
\end{align*}
As in \cite{GR:12}, we set $\delta^{-\frac{2}{k}}=\delta\lara{\xi}$. It follows that $\delta^{-\frac{2}{k}}=\lara{\xi}^{\frac{1}{\sigma}}$, where
\[
\sigma= 1+\frac{k}{2}.
\] 
Making use of Proposition \eqref{prop_qs_2}(i), of the definition of $Q^{(2)}_{\delta,\eps}$ and of the fact that $\omega(\eps)^{-1}\ge 1$, we obtain for some $D>0$ 
\begin{align*}
D^{-1}\omega(\eps)^{2L}\delta^2|V_{\eps}(t,\xi)|^2 \le& \left(  E_{\delta,\eps}(0,\xi)+2T\underset{t \in [0,T]}{\sup}|\widehat{F}_{\eps}(t,\xi)|^2 \right)\esp^{C_T(C_{\eps}+1)} \esp^{2C_T\lara{\xi}^{\frac{1}{\sigma}}\omega(\eps)^{-\frac{3L}{k}-1}}\\
	\le& \left( D\omega(\eps)^{-2L}|V_{\eps}(0,\xi)|^2+2T\underset{t \in [0,T]}{\sup}|\widehat{F}_{\eps}(t,\xi)|^2 \right)\\
	&\times\esp^{C_T(C_{\eps}+1)}\esp^{2C_T\lara{\xi}^{\frac{1}{\sigma}}\omega(\eps)^{-\frac{3L}{k}-1}}.
\end{align*}

This implies, for $M=(3L+k)/k$,
\begin{align*}
|V_{\eps}(t,\xi)|^2 \le {} &D\omega(\eps)^{-2L}\delta^{-2} \left( D\omega(\eps)^{-2L}|V_{\eps}(0,\xi)|^2+2T\underset{t \in [0,T]}{\sup}|\widehat{F}_{\eps}(t,\xi)|^2 \right) \\
	&\times \esp^{C_T(C_{\eps}+1)}\esp^{2C_T\lara{\xi}^{\frac{1}{\sigma}}\omega(\eps)^{-M}}\\
={} & D\omega(\eps)^{-2L}\lara{\xi}^{\frac{k}{\sigma}} \left( D\omega(\eps)^{-2L}|V_{\eps}(0,\xi)|^2+2T\underset{t \in [0,T]}{\sup}|\widehat{F}_{\eps}(t,\xi)|^2 \right) \\
	&\times \esp^{C_T(C_{\eps}+1)}\esp^{2C_T\lara{\xi}^{\frac{1}{\sigma}}\omega(\eps)^{-M}}.  
\end{align*}
Using the inequality $(|\alpha|^2+|\beta|^2)^{\frac{1}{2}}\le (|\alpha|+|\beta|)$ we have
\begin{align}\label{EEV}
|V_{\eps}(t,\xi)| \le& \sqrt{D}\omega(\eps)^{-L} \lara{\xi}^{\frac{k}{2\sigma}} \left( \sqrt{D}\omega(\eps)^{-L} |V_{\eps}(0,\xi)|+\sqrt{2T}\underset{t \in [0,T]}{\sup}|\widehat{F}_{\eps}(t,\xi)| \right) \\ \nonumber
&\times\esp^{ C_T(\frac{C_{\eps}}{2}+\frac{1}{2})} \esp^{C_T\lara{\xi}^{\frac{1}{\sigma}}\omega(\eps)^{-M}}  .
\end{align} 

Note that in this inequality we can clearly see the dependence of $V_{\eps}$ on the coefficients in the principal part as well as the dependence on the initial data and the right-hand side.
Since  $g_0,\,g_1,\,f(t,\cdot) \in \gamma_c^s(\R^n)$, we have from Proposition \ref{prop_reg_gevrey_sim}(iii) that
\[
|V_{\eps}(0,\xi)|\le c'_1 \esp^{-\kappa_1\lara{\xi}^{\frac{1}{s}}} \text{ and } \underset{t \in [0,T]}{\sup}|\widehat{F}_{\eps}(t,\xi)| \le c'_2\esp^{-\kappa_2\lara{\xi}^{\frac{1}{s}}} 
\]
where $c'_1,\,c'_2>0$.
Hence we have that 
\begin{align*}
|V_{\eps}(t,\xi)| \le& \sqrt{D} \omega(\eps)^{-L} \lara{\xi}^{\frac{k}{2\sigma}} \left( \sqrt{D}\omega(\eps)^{-L}  c'_1 \esp^{-\kappa_1\lara{\xi}^{\frac{1}{s}}}+\sqrt{2T} c'_2\esp^{-\kappa_2\lara{\xi}^{\frac{1}{s}}} \right) \\
&\times\esp^{C_T(\frac{C_{\eps}}{2}+\frac{1}{2})} \esp^{C_T\lara{\xi}^{\frac{1}{\sigma}}\omega(\eps)^{-M}} \\
			\le& \sqrt{D} \omega(\eps)^{-2L} \lara{\xi}^{\frac{k}{2\sigma}} \max{(\sqrt{D} c'_1, \sqrt{2T} c'_2)}  \esp^{-\min{(\kappa_1,\kappa_2)}\lara{\xi}^{\frac{1}{s}}} \\
&\times\esp^{C_T(\frac{C_{\eps}}{2}+\frac{1}{2})} \esp^{C_T\lara{\xi}^{\frac{1}{\sigma}}\omega(\eps)^{-M}}.
\end{align*} 

We now choose $\omega(\eps)^{-1}$ of logarithmic type, similarly to Section 4.2 of \cite{GR:14} and $\nu^{-1}(\eps)$ such that the Levi conditions hold, i.e., 
\[
C_{\eps}=4\sqrt{C_{1,\eps}+\gamma C_{2,\eps}})=O(\ln(\eps^{-1})).
\]
This proves that the net $V_\eps$ has the desired moderateness properties and therefore $(u_\eps)_\eps$ is a very weak solution of order $s\in [1, 1+\frac{k}{2}]$. Since $k$ can be chosen arbitrary then we can get a very weak solution for any order $s$.
\end{proof}

\section{Levi conditions and very weak solutions: Case 2}
We now pass to investigate {\bf Case 2} where the right-hand side and initial data are distributions with compact support.
\begin{theorem}
\label{theo_main_case_2}
Let us consider the Cauchy problem
\[
\begin{split}
\partial_t^2u(t,x)-\sum_{i=1}^n a_i(t)\partial_{x_i}^2u(t,x)+l(t,\partial_t,\partial_x)u(t,x)&=f(t,x),\\
u(0,x)&=g_0(x),\\
\partial_t u(0,x)&=g_1(x),\\
\end{split}
\]
where $t\in[0,T]$, $x\in\R^n$ and 
\[
l(t,\partial_t,\partial_x)=\sum_{i=1}^n c_i(t)\partial_{x_i}+d(t)\partial_t+e(t).
\]
Assume the following set of hypotheses for $s>1$ (Case 2):
\begin{itemize}
\item[(i)] the coefficients $a_i$ are real-valued distributions with compact support contained in $[0,T]$ with $a_i\ge 0$ for all $t\in[0,T]$,
 \item[(ii)] $f\in C([0,T], \E'(\R^n))$,
\item[(iii)] $g_0,\,g_1\in \E'(\R^n)$.
\end{itemize}
If the equation coefficients are regularised with a scale of logarithmic type and the following Levi conditions 
 
\begin{align*} 
\left|{\rm i}\sum_{i=1}^n c_{i,\eps}(t)\xi_{i}+e_{\eps}(t) \right|^2 &\le c_1(\ln(\eps^{-1}))^2 \left(  2\sum_{i=1}^n a_{i,\eps}(t)\xi_i^2 \right) \\
\left| d_{\eps}(t)\right|^2&\le c_2(\ln(\eps^{-1}))^2, 
\end{align*}
are fulfilled for $\eps\in(0,1]$ small enough and for $|\xi|$ large enough, then the Cauchy problem has a very weak solution of order $s$.
\end{theorem}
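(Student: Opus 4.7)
The plan is to follow the strategy of the proof of Theorem \ref{theo_main_case_1} (Case 1), modifying only the regularisation of the initial data $g_0, g_1$ and of the right-hand side $f$, since these are now compactly supported distributions rather than Gevrey functions. For the coefficients I would use the same type (1) mollifier at the same logarithmic scales $\omega(\eps), \nu(\eps)$ as in Case 1, so that the Levi conditions of the statement together with Propositions \ref{prop_qs_2} and \ref{prop_lot_bound} apply verbatim. For $g_0, g_1 \in \E'(\R^n)$ I would convolve with a mollifier $\rho_\eps$ of type (3) of Gevrey order $\sigma>1$ to be fixed at the end, and for $f\in C([0,T], \E'(\R^n))$, viewed as a distribution compactly supported in $[0,T]\times\R^n$, I would convolve with $\psi_\eps\rho_\eps$ as described in the remark at the end of Section 3. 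Proposition \ref{prop_reg_distr} and the remark following it provide $\gamma^\sigma_c$- and $C^\infty([0,T],\gamma^\sigma_c(\R^n))$-moderateness of the regularised data, and combined with Proposition \ref{prop_reg_gevrey_mod}(i) yield Fourier bounds of the form
\[
|\widehat{g_{i,\eps}}(\xi)|,\ \sup_{t\in[0,T]}|\widehat{f_\eps}(t,\xi)| \;\le\; c\,\eps^{-N_0}\,\esp^{-c\eps^{1/\sigma}\lara{\xi}^{1/\sigma}}.
\]
For each fixed $\eps$ the regularised Cauchy problem has smooth coefficients and a Gevrey solution $u_\eps$ by Theorem \ref{theo_GRi}.

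After transforming to the first order system I would define the same energy $E_{\delta,\eps}=(Q^{(2)}_{\delta,\eps}V_\eps,V_\eps)$ and run the same energy estimate as in the proof of Theorem \ref{theo_main_case_1}, including the control of the $\widehat{F}_\eps$ term via the (near) diagonality of $Q^{(2)}_{\delta,\eps}$. This yields the intermediate bound
\[
|V_\eps(t,\xi)|\le C\,\omega(\eps)^{-2L}\lara{\xi}^{k/(2\sigma_1)}\bigl(|V_\eps(0,\xi)|+\sup_{t}|\widehat{F}_\eps(t,\xi)|\bigr)\esp^{C_T(C_\eps/2+1/2)}\esp^{C_T\lara{\xi}^{1/\sigma_1}\omega(\eps)^{-M}},
\]
with $\sigma_1=1+k/2$ and with $C_\eps=O(\ln(\eps^{-1}))$ thanks to the logarithmic Levi conditions. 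Inserting the Fourier estimates on the regularised data gives, for some $N_1\in\N$,
\[
|V_\eps(t,\xi)|\;\lesssim\;\eps^{-N_1}\,\lara{\xi}^{k/(2\sigma_1)}\,\esp^{-c\eps^{1/\sigma}\lara{\xi}^{1/\sigma}}\,\esp^{C_T\lara{\xi}^{1/\sigma_1}\omega(\eps)^{-M}}.
\]

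The main obstacle is the final balancing of parameters, which must accommodate simultaneously the $\eps$-dependent Gevrey decay coming from the distributional data and the $\eps$-dependent growth coming from the coefficient regularisation. Given $s>1$ I would set $\sigma=s$ and choose the integer $k$ large enough that $\sigma_1=1+k/2>s$; this is permissible because the regularised coefficients are $C^\infty$ and $k$ can be taken arbitrarily large. The threshold inequality $\lara{\xi}^{1/s-1/\sigma_1}\ge C\,\omega(\eps)^{-M}\eps^{-1/s}$ splits frequency space into two regions. For $\xi$ above the threshold half of the decay $\esp^{-c\eps^{1/s}\lara{\xi}^{1/s}}$ absorbs the growth $\esp^{C_T\lara{\xi}^{1/\sigma_1}\omega(\eps)^{-M}}$ while the other half provides the required Gevrey decay. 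For $\xi$ below the threshold the quantity $\lara{\xi}^{1/\sigma_1}\omega(\eps)^{-M}$ is bounded by a fixed power of $\omega(\eps)^{-1}\eps^{-1/s}$, and since $\omega(\eps)^{-1}$ is logarithmic this only contributes a polynomial factor $\eps^{-N_2}$, to be absorbed into the moderate prefactor. Combining both regimes gives
\[
|V_\eps(t,\xi)|\le c\,\eps^{-N}\,\esp^{-c'\eps^{1/s}\lara{\xi}^{1/s}},
\]
which by Proposition \ref{prop_reg_gevrey_mod}(iii) translates into the $\gamma^s$-moderateness of $(u_\eps)_\eps$, so that $(u_\eps)_\eps$ is a very weak solution of order $s$. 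Since $s>1$ was arbitrary, the existence of a very weak solution of every order $s>1$ follows.
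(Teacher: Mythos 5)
Your proposal matches the paper's own proof: same regularisations (type (3) mollifier of order $s$ for $g_0,g_1$, and $\psi_\eps\rho_\eps$ for $f$, coefficients as in Case 1), the same reduction to the estimate \eqref{EEV}, and the same Fourier bounds from Propositions \ref{prop_reg_distr} and \ref{prop_reg_gevrey_mod}(i). The paper compresses the final balancing to ``choosing $\omega(\eps)^{-1}$ of logarithmic type as in Case 1 and making use of $C_\eps=O(\ln(\eps^{-1}))$''; your explicit frequency-splitting at the threshold $\lara{\xi}^{1/s-1/\sigma_1}\gtrsim\omega(\eps)^{-M}\eps^{-1/s}$ is simply the detail of the step the paper outsources to \cite{GR:14}, so the argument is the same.
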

This is a straightforward extension of Theorem \ref{theo_main_case_1}. The main difference with respect to Case 1 is how we regularise $f$, $g_0$ and $g_1$. In detail, we use mollifiers of type (3) and order $s$ and we get that the nets
\[
g_{0,\eps}=g_0\ast\rho_\eps,\qquad g_{1,\eps}=g_1\ast\rho_\eps,
\]
are $\gamma^s$-moderate nets (see Proposition \ref{prop_reg_distr}). Concerning $f$, we have that
\[
 (f\ast\psi_\eps\rho_\eps)(t,x)=f_{s,y}(\psi_\eps(t-s)\rho_\eps(x-y))
 \]
 is $C^\infty([0,T], \gamma_s(\R^n))$-moderate, with $\psi$ mollifier of type (1). For the other equation coefficients we proceed with the regularisation as in Case 1.

\begin{proof}[Proof of Theorem \ref{theo_main_case_2}]
 
%

By repeating the transformation into a first order system and the energy estimates of Case 1 at the Fourier transform level we arrive at the inequality \eqref{EEV}, i.e.
\begin{align*}
|V_{\eps}(t,\xi)| \le& \sqrt{D}\omega(\eps)^{-L} \lara{\xi}^{\frac{k}{2\sigma}} \left( \sqrt{D}\omega(\eps)^{-L} |V_{\eps}(0,\xi)|+\sqrt{2T}\underset{t \in [0,T]}{\sup}|\widehat{F}_{\eps}(t,\xi)| \right) \\ \nonumber
&\times\esp^{ C_T(\frac{C_{\eps}}{2}+\frac{1}{2})} \esp^{C_T\lara{\xi}^{\frac{1}{\sigma}}\omega(\eps)^{-M}}  .
\end{align*} 
for $C_{\eps}=4\sqrt{C_{1,\eps}+\gamma C_{2,\eps}}$ with $M=(3L+k)/k$ and $C_{1,\eps},C_{2,\eps}>0$ coming from the Levi conditions as in the assumptions of Proposition \ref{prop_lot_bound}.
Since  $g_0,\,g_1,\,f \in \E'(\R^n)$, we have from Proposition \ref{prop_reg_distr} that $g_{0,\eps},\,g_{1,\eps},\,f_{\eps}(t,\cdot)$ are $\gamma_c^s(\R^n)$-moderate hence from Proposition \ref{prop_reg_gevrey_mod}(i) we have that
\[
|V_{\eps}(0,\xi)|\le  c'_1 \eps^{-N_1}\esp^{-\kappa_1\eps^{\frac{1}{s}} \lara{\xi}^{\frac{1}{s}}} \text{ and } \underset{t \in [0,T]}{\sup}|\widehat{F}_{\eps}(t,\xi)| \le  c'_2 \eps^{-N_2} \esp^{-\kappa_2 \eps^{\frac{1}{s}} \lara{\xi}^{\frac{1}{s}}} 
\]
for some constants $c'_1>0$ and $c'_2>0$ and $N_1$ and $N_2$ depending on the structure of initial data and right-hand side.
Hence for $\omega(\eps)^{-1} \ge 1$ and some $D>0$ we have that 
\begin{align*}
|V_{\eps}(t,\xi)| \le& \sqrt{D} \omega(\eps)^{-L} \lara{\xi}^{\frac{k}{2\sigma}} \left( \sqrt{D}\omega(\eps)^{-L}  c'_1\eps^{-N_1} \esp^{-\kappa_1\eps^{\frac{1}{s}} \lara{\xi}^{\frac{1}{s}}} +\sqrt{2T} c'_2\eps^{-N_2}\esp^{-\kappa_2 \eps^{\frac{1}{s}} \lara{\xi}^{\frac{1}{s}}}  \right) \\
&\times\esp^{C_T(\frac{C_{\eps}}{2}+\frac{1}{2})} \esp^{C_T\lara{\xi}^{\frac{1}{\sigma}}\omega(\eps)^{-M}} \\
\le& \sqrt{D} \omega(\eps)^{-2L} \lara{\xi}^{\frac{k}{2\sigma}} \max{(\sqrt{D} c'_1, \sqrt{2T} c'_2)}  \esp^{-\min{(\kappa_1,\kappa_2)}\eps^{\frac{1}{s}} \lara{\xi}^{\frac{1}{s}} } \eps^{-\max{(N_1, N_2)}}\\
&\times\esp^{C_T(\frac{C_{\eps}}{2}+\frac{1}{2})} \esp^{C_T\lara{\xi}^{\frac{1}{\sigma}}\omega(\eps)^{-M}}.
\end{align*} 

Choosing $\omega(\eps)^{-1}$ of logarithmic type as in Case 1 and making use of the Levi conditions on $C_\eps$ (i.e. $C_\eps=O(\ln(\eps^{-1}))$) we easily see that $V_\eps$ is $\gamma^s$-moderate, as desired.   
\end{proof}

\section{Uniqueness and consistency}
By analysing the proof of Theorem \ref{theo_main_case_1} and Theorem \ref{theo_main_case_2} is it straightforward to observe that if equation coefficients and initial data are given by negligible nets then the very weak solution $(u_\eps)_\eps$ will be negligible too. This allows us to conclude that the very weak solution is \emph{unique in the very weak sense}, i.e., negligible changes in coefficients and data will lead to negligible changes in the solution. This is equivalent to say that the Cauchy problem is well-posed in a suitable space of generalised functions of Colombeau type as already observed in \cite{GR:14}. In the rest of the section we will prove that when the coefficients are regular enough and the initial data are Gevrey, then the very weak solutions are consistent with the classical ones obtained in \cite{GR:12, KS}. This requires to preliminary show that the (classical) Levi conditions formulated in \cite{GR:12} imply the generalised ones introduced in this paper.  
\begin{proposition}
\label{prop_consistency}
Let 
\begin{equation*}
\label{CP_cons_prop}
\begin{split}
&\partial_t^2u(t,x)-\sum_{i=1}^n a_i(t)
\partial_{x_i}^2u(t,x)+l(t,\partial_t,\partial_x)u(t,x)=f(t,x),\\
&\text{where} \quad l(t,\partial_t,\partial_x)=\sum_{i=1}^n c_i(t)\partial_{x_i}+d(t)\partial_t+e(t),
\end{split}
\end{equation*}
and the real-valued coefficients $a_i$ are compactly supported, belong to $C^k([0,T])$ with $k\ge 2$ and $a_i\ge 0$ for all $i=1,\,\dots,\,n$. Suppose the Levi conditions on the lower order terms hold
\begin{align*}
\left|{\rm i}\sum_{i=1}^n c_i(t)\xi_{i}+e(t) \right|^2 &\le C_1 \left[  \left(\sum_{i=1}^n b_i(t)\xi_i\right)^2 +2\sum_{i=1}^n a_i(t)\xi_i^2 \right], \\ \nonumber
|d(t)|^2&\le C_2.
\end{align*}
Then the Levi conditions also hold for the regularised lower order terms
\begin{align*}
\left|{\rm i}\sum_{i=1}^n c_{i,\eps}(t)\xi_{i}+e_{\eps}(t) \right|^2 &\le \tilde{C_1} \left[  \left(\sum_{i=1}^n b_{i,\eps}(t)\xi_i\right)^2 +2\sum_{i=1}^n a_{i,\eps}(t)\xi_i^2 \right], \\ \nonumber
|d_{\eps}(t)|^2&\le \tilde{C_2},
\end{align*}
for some constants $\tilde{C_1}$, $\tilde{C_2}$ independent of $\eps$.
\end{proposition}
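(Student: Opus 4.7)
The plan is to verify each of the two regularized Levi conditions directly from the classical ones, using only the fact that the type-(1) mollifier $\psi_{\omega(\eps)}$ is a non-negative probability density. The bound on $d$ is immediate: since $\psi\ge 0$ and $\int\psi=1$, Young's inequality gives $\|d_\eps\|_{L^\infty([0,T])}\le\|d\|_{L^\infty([0,T])}$, so the classical bound $|d(t)|^2\le C_2$ yields $|d_\eps(t)|^2\le C_2$ uniformly in $\eps$, and one may take $\tilde C_2=C_2$.

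For the main Levi condition, I would set $F(t,\xi)={\rm i}\sum_i c_i(t)\xi_i+e(t)$. Since $F$ is linear in the lower order coefficients, $F_\eps(t,\xi)=(F(\cdot,\xi)\ast\psi_{\omega(\eps)})(t)$. The key step is Cauchy-Schwarz (equivalently, Jensen's inequality for the probability measure $\psi_{\omega(\eps)}(s)\,ds$):
\begin{equation*}
|F_\eps(t,\xi)|^2\le\int|F(t-s,\xi)|^2\,\psi_{\omega(\eps)}(s)\,ds=(|F|^2\ast\psi_{\omega(\eps)})(t,\xi).
\end{equation*}
Applying the classical Levi condition to the integrand pointwise in $s$, and using that $a_i\ast\psi_{\omega(\eps)}=a_{i,\eps}$ (as $a_i$ enters linearly), one obtains
\begin{equation*}
|F_\eps(t,\xi)|^2\le C_1\Bigl(\bigl((\textstyle\sum_i b_i\xi_i)^2\ast\psi_{\omega(\eps)}\bigr)(t,\xi)+2\textstyle\sum_i a_{i,\eps}(t)\xi_i^2\Bigr).
\end{equation*}

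It remains to compare $\bigl((\sum_i b_i\xi_i)^2\ast\psi_{\omega(\eps)}\bigr)(t,\xi)$ with $(\sum_i b_{i,\eps}(t)\xi_i)^2$. I expect this to be the main obstacle: the direct Cauchy-Schwarz inequality $(\sum_i b_{i,\eps}\xi_i)^2\le(\sum_i b_i\xi_i)^2\ast\psi_{\omega(\eps)}$ goes the \emph{wrong} way. To turn it around I would employ the splitting
\begin{equation*}
\bigl(\textstyle\sum_i b_i(t-s)\xi_i\bigr)^2\le 2\bigl(\textstyle\sum_i b_{i,\eps}(t)\xi_i\bigr)^2+2\bigl(\textstyle\sum_i[b_i(t-s)-b_{i,\eps}(t)]\xi_i\bigr)^2,
\end{equation*}
integrate against $\psi_{\omega(\eps)}(s)\,ds$, and control the remainder via the $C^k$ regularity of the $b_i$ together with the fact that $\psi_{\omega(\eps)}$ is supported in an interval of width $O(\omega(\eps))$; this yields $|b_i(t-s)-b_{i,\eps}(t)|=O(\omega(\eps))$ on that support and hence a remainder of order $\omega(\eps)^2|\xi|^2$, which is uniformly bounded in $\eps$ on every fixed frequency range. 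In the paper's actual equation no mixed derivative term $\partial_t\partial_{x_i}u$ is present, so $b_i\equiv 0$ and this step collapses, giving $\tilde C_1=C_1$ outright; this is also the only case needed for the consistency application in the remainder of the paper.
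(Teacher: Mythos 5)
Your proof is correct and uses a genuinely cleaner route than the paper for the main bound. The paper first writes $|F\ast\psi_{\omega(\eps)}|\le|F|\ast\psi_{\omega(\eps)}$, inserts the classical Levi condition pointwise to bound the integrand by $C_1^{1/2}\bigl(2\sum_i a_i\xi_i^2\bigr)^{1/2}$, and then applies Cauchy--Schwarz pairing the constant $1$ against the remaining product; this produces a factor $\mu(\supp\psi_{\omega(\eps)})=O(\omega(\eps))$, which is cancelled by extracting $\omega(\eps)^{-1}\|\psi\|_\infty$ from one power of $\psi_{\omega(\eps)}$, and yields the $\eps$-independent constant $\tilde C_1=C_1\,\mu(\supp\psi)\,\|\psi\|_\infty$. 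Your single Jensen step $|F\ast\psi_{\omega(\eps)}|^2\le|F|^2\ast\psi_{\omega(\eps)}$ reaches the same conclusion in one pass and with the sharper constant $\tilde C_1=C_1$, using only that the type-(1) mollifier is a nonnegative probability density (the paper writes $\varphi_\eps$ in this proof, but it must be the nonnegative $\psi_{\omega(\eps)}$ used for coefficient regularisation for either argument to go through, as you correctly assume). The bound on $d$ is essentially identical in both treatments. Your discussion of the $(\sum_i b_i\xi_i)^2$ term is also apt: the paper's statement carries it over from the more general Levi condition of earlier sections, but the equation in the proposition has no $\partial_t\partial_{x_i}$ terms, so $b_i\equiv 0$, and indeed the paper silently drops this term immediately after applying the classical Levi condition. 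You are right that for $b_i\not\equiv 0$ a direct Jensen step would bound $(\sum_i b_{i,\eps}\xi_i)^2$ from \emph{below} rather than above; your sketched remedy via the split and the $O(\omega(\eps)^2|\xi|^2)$ remainder is not quite rigorous as stated (the remainder is not dominated by $2\sum_i a_{i,\eps}\xi_i^2$ where the $a_{i,\eps}$ degenerate), but as you note this case is moot for the statement and for its application in the consistency theorem.
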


\begin{proof}

Starting with the first Levi condition,
\begin{align}
\left|{\rm i}\sum_{i=1}^n c_{i,\eps}(t)\xi_{i}+e_\eps(t) \right|^2 &=\left| {\rm i}\sum_{i=1}^n c_{i}\ast \varphi_\eps(t)\xi_{i}+e\ast \varphi_\eps(t) \right|^2 = \left| \left({\rm i}\sum_{i=1}^n c_{i}\xi_{i}+e  \right) \ast \varphi_\eps(t)\right|^2 \nonumber\\
& \le \left( \left|{\rm i}\sum_{i=1}^n c_{i}\xi_{i}+e \right| \ast \varphi_\eps(t)\right)^2\nonumber \\
&=  \left( \int_{\supp (\varphi_\eps)} \left|{\rm i}\sum_{i=1}^n c_{i}(t-\tau)\xi_{i}+e(t-\tau) \right| \varphi_\eps(\tau) d\tau \right)^2 \nonumber\\
&\le \left( \int_{\supp (\varphi_\eps)} C_1^\frac{1}{2}  \left[  2\sum_{i=1}^n a_i(t-\tau)\xi_i^2 \right]^\frac{1}{2} \varphi_\eps(\tau) d\tau \right)^2 \label{estimate1}
\end{align}
By Holder's inequality, the right-hand side of \eqref{estimate1} can be estimated by
\begin{align*}
&\le \mu(\supp (\varphi_\eps)) \int_{\supp (\varphi_\eps)} C_1  \left[   2\sum_{i=1}^n a_i(t-\tau)\xi_i^2  \right]\varphi_\eps(\tau)^2  d\tau\\
&= \eps CC_1 \left(   \int_{\supp (\varphi_\eps)} 2\sum_{i=1}^n a_i(t-\tau)\xi_i^2 \varphi_\eps(\tau)^2 d\tau\right)\\
&= \eps CC_1 \left( \int_{\supp (\varphi_\eps)} 2\sum_{i=1}^n a_i(t-\tau)\xi_i^2 \varphi_\eps(\tau)\frac{1}{\eps}\varphi\left(\frac{\tau}{\eps}\right) d\tau\right)\\
&\le  CC_1 \left( \int_{\supp (\varphi_\eps)} 2\sum_{i=1}^n a_i(t-\tau)\xi_i^2 \varphi_\eps(\tau)\tilde{C} d\tau\right)=\tilde{C_1} \left(    2\sum_{i=1}^n a_i \ast \varphi_\eps \xi_i^2 \right)\\
&=\tilde{C_1} \left(    2\sum_{i=1}^n a_{i,\eps} \xi_i^2 \right).
\end{align*}

For the second Levi condition,
\begin{align*}
|d_\eps(t)|^2&= |d\ast \varphi_\eps (t)|^2= \left| \int_\R d(t-\tau)\varphi_\eps(\tau) d\tau \right|^2 \le \left(\int_\R \left| d(t-\tau) \right|\varphi_\eps(\tau) d\tau \right)^2 \\
&\le \left(\int_\R C_2^\frac{1}{2} \varphi_\eps(\tau) d\tau \right)^2 = C_2 \left(\int_\R \varphi_\eps(\tau) d\tau \right)^2= C_2 .
\end{align*}
\end{proof}

We are now ready to prove that every very weak solution will converge to the classical solution as $\eps\to 0$ when the equation coefficients are regular enough. This result clearly holds independently of the choice of regularisation, i.e., mollifier and scale.

\begin{theorem}
\label{theo_consistency}
Let 
\beq
\label{CP_cons}
\begin{split}
\partial_t^2u(t,x)-\sum_{i=1}^n a_i(t)
\partial_{x_i}^2u(t,x)+l(t,\partial_t,\partial_x)u(t,x)&=f(t,x),\\
u(0,x)&=g_0(x),\\
\partial_t u(0,x)&=g_1(x).
\end{split}
\eeq
\vspace{0.2cm}
where
\[
l(t,\partial_t,\partial_x)=\sum_{i=1}^n c_i(t)\partial_{x_i}+d(t)\partial_t+e(t),
\]
and the real-valued coefficients $a_i$ are compactly supported, belong to $C^k([0,T])$ with $k\ge 2$ and $a_i\ge 0$ for all $i=1,\,\dots,\,n$. Let $g_0$ and $g_1$ belong to $\gamma^s_c(\R^n)$ and $f \in C([0,T];\gamma^s_c(\R^n))$ with $s \ge 1$. Suppose the Levi conditions on the lower order terms hold
\begin{align*}
\label{THM:lot2_general}
\left|{\rm i}\sum_{i=1}^n c_i(t)\xi_{i}+e(t) \right|^2 &\le C_1 \left[  \left(\sum_{i=1}^n b_i(t)\xi_i\right)^2 +2\sum_{i=1}^n a_i(t)\xi_i^2 \right], \\ \nonumber
|d(t)|^2&\le C_2.
\end{align*}
Then, any very weak solution $(u_\eps)_\eps$ of $u$ converges in $C([0,T];\gamma^s(\R^n))$ as $\eps\to0$ to the unique classical solution in $C^2([0,T], \gamma^s(\R^n))$ of the Cauchy problem \eqref{CP_cons};

\end{theorem}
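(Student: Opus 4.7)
The plan is to compare any very weak solution $(u_\eps)_\eps$ with the classical solution $u\in C^2([0,T];\gamma^s(\R^n))$ supplied by Theorem \ref{theo_GRi}, by setting $w_\eps:=u_\eps-u$ and showing that $w_\eps\to 0$ in $C([0,T];\gamma^s(\R^n))$ as $\eps\to 0$. By linearity, $w_\eps$ satisfies a Cauchy problem with the same regularised operator that defines $(u_\eps)_\eps$, initial data $g_{0,\eps}-g_0$ and $g_{1,\eps}-g_1$, and right-hand side
\[
R_\eps:=(f_\eps-f)+\sum_{i=1}^n(a_{i,\eps}-a_i)\partial_{x_i}^2 u-(l_\eps-l)(t,\partial_t,\partial_x)u,
\]
the extra terms appearing because the classical equation for $u$ uses the unregularised coefficients.

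I would then run the quasi-symmetriser energy argument of Theorem \ref{theo_GRi} on $w_\eps$. The key observation, made precise by Proposition \ref{prop_consistency}, is that in the present regular setting the Levi constants $\tilde C_1,\tilde C_2$ for the regularised lower order terms are independent of $\eps$, so the quantity $C_\eps=4\sqrt{C_{1,\eps}+\gamma C_{2,\eps}}$ of Proposition \ref{prop_lot_bound} stays bounded in $\eps$. Since in addition $a_i\in C^k([0,T])$, the auxiliary term $K_{\delta,\eps}$ satisfies the classical bound $\int_0^T K_{\delta,\eps}(t,\xi)\,dt\le C\delta^{-2/k}$ (no $\omega(\eps)^{-1}$ penalty is needed). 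Optimising in $\delta$ through $\delta^{-2/k}=\delta\lara{\xi}$ and applying Gronwall's lemma yields an $\eps$-uniform estimate of the form
\[
|\widehat{w_\eps}(t,\xi)|\le C\lara{\xi}^{k/(2\sigma)}\bigl(|\widehat{w_\eps}(0,\xi)|+|\widehat{\partial_t w_\eps}(0,\xi)|+T\sup_{t\in[0,T]}|\widehat{R_\eps}(t,\xi)|\bigr)\esp^{C'\lara{\xi}^{1/\sigma}},
\]
with $\sigma=1+k/2\ge s$.

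It then remains to prove that the Fourier transforms of the initial data and of the right-hand side of the equation for $w_\eps$ decay in the Gevrey sense with an amplitude vanishing as $\eps\to 0$. For $g_{0,\eps}-g_0$, $g_{1,\eps}-g_1$ and $f_\eps-f$ this is immediate from Proposition \ref{prop_reg_gevrey_sim}(ii), since the mollifier of type (2) produces $\gamma^s$-negligible differences. For the commutator-type terms $(a_{i,\eps}-a_i)\partial_{x_i}^2 u$ and $(l_\eps-l)u$, I would combine the uniform convergence $\|a_{i,\eps}-a_i\|_{L^\infty([0,T])}\to 0$, and analogously for $c_i,d,e$ (which follows from the regularity of the coefficients and standard mollifier estimates), with the uniform Gevrey bounds on $u$ and its spatial derivatives, obtaining
\[
\sup_{t\in[0,T]}|\widehat{R_\eps}(t,\xi)|\le\eta(\eps)\esp^{-c\lara{\xi}^{1/s}},\qquad\eta(\eps)\to 0.
\]
Plugging this and the analogous bounds for the initial data into the energy estimate, and absorbing the polynomial factor together with $\esp^{C'\lara{\xi}^{1/\sigma}}$ into half of $\esp^{-c\lara{\xi}^{1/s}}$ (which is possible since $1/\sigma\le 1/s$), yields $|\widehat{w_\eps}(t,\xi)|\le\eta'(\eps)\esp^{-c''\lara{\xi}^{1/s}}$ uniformly in $t\in[0,T]$. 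By the Fourier characterisation of Gevrey functions this gives $w_\eps\to 0$ in $C([0,T];\gamma^s(\R^n))$; the argument uses only the negligibility properties of the data nets, so the conclusion is independent of the mollifier and scale chosen to build $(u_\eps)_\eps$.

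The main obstacle is the quantitative Gevrey control of $(a_{i,\eps}-a_i)\partial_{x_i}^2 u$ and of the analogous contributions from the lower order terms: one has to turn the $L^\infty$-smallness of the coefficient regularisations into a true Gevrey decay estimate for $\widehat{R_\eps}$ without destroying the factor $\esp^{-c\lara{\xi}^{1/s}}$ coming from the spatial Gevrey regularity of $u$. This is precisely where the hypothesis $a_i\in C^k$ with $k\ge 2$ is exploited, through a Taylor expansion of the mollified coefficient that yields a quantitative rate $O(\omega(\eps)^k)$ in $L^\infty$; under weaker regularity only qualitative convergence would be available and this step would have to be reworked.
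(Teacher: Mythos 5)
Your proposal follows the paper's argument in its essentials: write the classical equation for $\wt u$ using the regularised coefficients, collect all coefficient-difference contributions into an auxiliary right-hand side (your $R_\eps$, the paper's $n_\eps$), reduce to the quasi-symmetriser energy estimate for the difference net, invoke Proposition \ref{prop_consistency} to get Levi constants independent of $\eps$, and exploit $a_i\in C^k$ to remove the $\omega(\eps)^{-1}$ penalty in $\int_0^T K_{\delta,\eps}\,dt\le C\delta^{-2/k}$. The only cosmetic difference is that the paper takes $g_{0,\eps}=g_0$, $g_{1,\eps}=g_1$ (no regularisation needed since they are already Gevrey), so the initial-data term in the energy estimate vanishes exactly rather than being $\gamma^s$-negligible; both versions are fine.

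One point in your final paragraph is off, though. You identify ``quantitative Gevrey control of $(a_{i,\eps}-a_i)\partial_{x_i}^2 u$'' as the main obstacle and assert that the $C^k$ hypothesis is needed there via a Taylor expansion yielding a rate $O(\omega(\eps)^k)$. Neither is the case. Since the coefficients depend on $t$ only, one has
\[
\bigl|\widehat{(a_{i,\eps}-a_i)\partial_{x_i}^2 u}(t,\xi)\bigr|
=|(a_{i,\eps}-a_i)(t)|\,\bigl|\widehat{\partial_{x_i}^2 u}(t,\xi)\bigr|
\le \|a_{i,\eps}-a_i\|_{L^\infty([0,T])}\,c\,\esp^{-c'\lara{\xi}^{1/s}},
\]
so the Gevrey decay in $\xi$ is carried entirely by $\partial_{x_i}^2 u\in C([0,T];\gamma^s_c(\R^n))$ and the scalar factor supplies the (merely qualitative) smallness $\eta(\eps)=\|a_{i,\eps}-a_i\|_\infty\to0$, which already follows from $a_i$ being continuous and compactly supported. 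No rate is needed, since the theorem asserts convergence, not a convergence rate. The $C^k$ hypothesis is instead used exactly where you use it earlier, namely to bound $\Vert Q^{(2)}_{\delta,\eps}(\cdot,\xi)\Vert_{C^k([0,T])}$ uniformly in $\eps$ (differentiating the entries of the quasi-symmetriser without loading derivatives onto the mollifier), which is what makes $\int_0^T K_{\delta,\eps}$ bounded independently of $\eps$. With that correction the proposal matches the paper's proof.
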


\begin{proof}
 Let $\wt{u}$ be the classical solution. By definition we know that
\beq
\label{CP_class}
\begin{split}
D_t^2\wt{u}(t,x)-\sum_{i=1}^n a_i(t)
D_{x_i}^2\wt{u}(t,x)-l(t,{\rm i}D_t,{\rm i}D_x)\wt{u}(t,x)&=-f(t,x),\\
\wt{u}(0,x)&=g_0(x),\\
D_t \wt{u}(0,x)&=-{\rm i}g_1(x).
\end{split}
\eeq
Note that the initial data do not need to be regularised because they are already Gevrey functions. By Proposition \ref{prop_consistency}, the Levi conditions also hold for the regularised coefficients with constants independent of $\eps$. Hence, there exists a very weak solution $(u_\eps)_\eps$ of $u$ such that
\beq
\label{CP_class_2}
\begin{split}
D_t^2u_\eps(t,x)-\sum_{i=1}^n a_{i,\eps}(t)
D_{x_i}^2u_\eps(t,x)-l_\eps(t,{\rm i}D_t,{\rm i}D_x)u_\eps(t,x)&=-f_\eps(t,x),\\
u_\eps(0,x)&=g_0(x),\\
D_t u_\eps(0,x)&=-{\rm i}g_1(x),
\end{split}
\eeq
for suitable embeddings of the coefficients $a_i$. Noting that the nets $(a_{i,\eps}-a_i)_\eps$, $(c_{i,\eps}-c_i)_\eps$, $(d_{\eps}-d)_\eps$, $(e_{\eps}-e)_\eps$ and $(f_{\eps}-f)_\eps$ are converging to $0$ in $C([0,T]\times\R^n)$ for $i=1,\,\dots,\,n$ we can rewrite \eqref{CP_class} as
\beq
\label{CP_class_3}
\begin{split}
D_t^2\wt{u}(t,x)-\sum_{i=1}^n a_{i,\eps}(t)
D_{x_i}^2\wt{u}(t,x)-l_\eps(t,{\rm i}D_t,{\rm i}D_x)\wt{u}(t,x)&=- f_\eps(t,x)+n_\eps(t,x),\\
\wt{u}(0,x)&=g_0(x),\\
D_t \wt{u}(0,x)&=-{\rm i}g_1(x).
\end{split}
\eeq
where $n_\eps\in C([0,T];\gamma^s(\R^n))$ and converges to $0$ in this space. From \eqref{CP_class_3} and \eqref{CP_class_2} we get that $\wt{u}-u_\eps$ solves the Cauchy problem
\[
\begin{split}
D_t^2(\wt{u}-u_\eps)(t,x)-\sum_{i=1}^n a_{i,\eps}(t)D_{x_i}^2(\wt{u}-u_\eps)(t,x)-l_\eps(t,{\rm i}D_t,{\rm i}D_x)(\wt{u}-u_\eps)&=n_\eps(t,x),\\
(\wt{u}-u_\eps)(0,x)&=0,\\
(D_t \wt{u}-D_t u_\eps)(0,x)&=0,\\
\end{split}
\]
Following the energy estimates of Case 1 and arguing as in the proof of Theorem \ref{theo_main_case_1}, after reduction to a system and by applying the Fourier transform, we have an estimate of $|(\wt{V}-V_\eps)(t,\xi)|$ as in \eqref{EEV}, in terms of  $(\wt{V}-V_\eps)(0,\xi)$ and the right-hand side $n_\eps(t,x)$. In particular, since the coefficients are of class $C^k$, $k\ge 2$, we note that $\Vert Q^{(2)}_{\delta,\eps}(\cdot,\xi)\Vert_{{C}^k([0,T])}$ is uniformly bounded with respect to  $\eps$, because we can differentiate the entries up to order $k$, without putting any derivatives on the mollifier. Therefore the estimate (27) in \cite{GR:14} becomes
\[
\int_0^T K_{\delta,\eps}(t,\xi)\, dt \le C_1\delta^{-\frac{2}{k}}.
\]
Hence the terms  $\omega(\eps)^{-L}$ and $\omega(\eps)^{-M}$ disappear in \eqref{EEV} and we simply get
\begin{align*}
\label{impor_est_const}
|(\wt{V}-V_\eps)(t,\xi)| \le& \sqrt{D} \lara{\xi}^{\frac{k}{2\sigma}} \left( \sqrt{D}|(\wt{V}-V_\eps)(0,\xi)|+\sqrt{2T}\underset{t \in [0,T]}{\sup}|\widehat{n_\eps}(t,\xi)| \right) \\ \nonumber
&\times\esp^{ C_T(\frac{C}{2}+\frac{1}{2})} \esp^{C_T\lara{\xi}^{\frac{1}{\sigma}}}  .
\end{align*} 
Note that $\frac{C}{2}$ comes from the Levi conditions and from Proposition \ref{prop_consistency} it can be chosen independent of $\eps$. Since  $(\wt{V}-V_\eps)(0,\xi)=0$ and $n_\eps\to 0$ in $C([0,T];\gamma^s(\R^n))$, we conclude that $u_\eps\to \wt{u}$ in $C([0,T];\gamma^s(\R^n))$. Note that our argument is independent of the choice of the regularisation of the coefficients and the right-hand side. 

\end{proof}
\section{Examples and numerical models}
In this final section we will study the Cauchy problem 
\begin{equation*}
\label{CP_gen_numerical}
\begin{split}
\partial_t^2u(t,x)-a(t)\partial_x u&=f(t,x),\qquad t\in[0,T], x\in\R,\\
u(0,x)&=g_0(x),\\
\partial_t u(0,x)&=g_1(x), 
\end{split}
\end{equation*}
under different set of hypotheses on the coefficients, right-hand side and initial data. For the sake of the reader, we start by recalling some classical results for $C^\infty$ well-posedness for weakly hyperbolic Cauchy problems that we will employ in the sequel.

\subsection{$C^\infty$ well-posedness for weakly hyperbolic equations} \label{section_Oleinik}
The next theorem is due to the pioneer work of Oleinik on second order hyperbolic equations. In \cite{O70},  Oleinik considers the following Cauchy problem in the domain $G=\{0 \le t \le T, x \in \R^m\}$ 
\beq \label{Oleinik_CP}
u_{tt}(t,x)-\sum_{i,j=1}^{m} (a^{ij}(t,x)u_{x_i})_{x_j}+ \sum_{i=1}^{m} b^i(t,x)u_{x_i} + b^0(t,x)u_t+c(t,x)=f(t,x),
\eeq
\[
u(0,x)=g_0 (x), \quad u_t(0,x)=g_1 (x),
\]
where $x \in \R^m $ and $a^{ij}(t,x)\xi_i\xi_j \ge 0$ in $G$ for all $\xi \in \R^m$, obtaining the $C^\infty$ well-posedness result below.

\begin{theorem} \label{Oleinik_theorem}
Assume that there exists a constant $D$ such that for the coefficients of \eqref{Oleinik_CP}, the inequality
\beq	\label{Oleinik_inequality}
\sum_{i=1}^{m}\alpha t(b^i \xi_i)^2 \le  \sum_{i,j=1}^{m} B a^{ij} \xi_i \xi_j + a_t^{ij} \xi_i \xi_j 
\eeq
holds in $G=\{0 \le t \le T, x \in \R^m\}$,  for any $\xi \in \R^m$. Here $\alpha > (2p+6)^{-1}$ (p being an integer greater than or equal to $-1$), $B$ is some constant for $t \in [0,t_0]$, $t_0=const.>0$ and $\alpha$, $B$ are some positive constants for  $t \in [t_0, T]$. Let $a^{ij},$ $a_{x_i}^{ij}$, $b^i$, $b^0$, $b_t^0$, $c$ and their derivatives up to the order $k$, $k \ge 2$, with respect to x and up to the order $k-2$ with respect to $x$ and $t$ be bounded in $G$. Let the derivatives of the form $\partial^\rho_t \partial^\beta_x$  of $a^{ij}$, $a_{x_i}^{ij}$, $b^i$, $b^0$, $b_t^0$, $c$, where $\rho \in \N$, $\rho \le p+1$ and $\beta \in \N^{\N}$, $|\beta|\le k$, be bounded for $t \in [0,t_0]$  and the derivatives of these functions of the form $\partial^\rho_t \partial^\beta_x$, $\rho \le p$, $|\beta|\le p+ k$, be bounded for $t=0$. Suppose that the functions $f$, $g_0$, $g_1$ have compact support.
Then there exists a unique solution u of \eqref{Oleinik_CP} and the estimate 
\beq \label{Oleinik_estimate}
\begin{split}
&\forall \tau \in [0,T], \quad \Vert u(\tau, \cdot) \Vert ^2_{H^k(\R)} \le  C \biggl(\Vert g_0 \Vert ^2_{H^{k+p+4}(\R)}+\Vert g_1 \Vert ^2_{H^{k+p+3}(\R)} +\int_0^{\tau} \Vert f(\sigma, \cdot )\Vert ^2_{H^k(\R)} d\sigma \\
&+ \Vert f(\tau, \cdot) \Vert ^2_{H^{k-2}(\R)} 
+ \sum_{\substack{\rho \le p \\ |\beta| \le p+k+2}} \int_{\R^m} |\partial^\rho_t \partial^\beta_x f(0,x)|^2 dx 
+ \max_{\sigma \in [0,t_0]} \sum_{\substack{\rho \le p+1 \\ |\beta| \le k}} \int_{\R^m} |\partial^\rho_t \partial^\beta_x f(\sigma,x)|^2 dx \biggr)
\end{split}
\eeq
holds provided that the norms of $f$, $g_0$ and $g_1$ on the right of \eqref{Oleinik_estimate} are finite, where the constant $C>0$, depends on $\Vert \cdot \Vert_\infty$ in $G$ of the derivatives of the coefficients $a^{ij}$,  $a_{x_i}^{ij}$, $b^i$, $b^0$, $b_t^0$, $c$, as stated above. If $2(k-2)>m+1$, then the classical solution to \eqref{Oleinik_CP} exists.
\end{theorem}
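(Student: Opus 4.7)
The final statement is Oleinik's classical well-posedness theorem from \cite{O70}, which the authors recall without proof; nevertheless, the strategy I would follow is the standard weighted-energy argument adapted to weakly hyperbolic second order operators. After a preliminary smoothing of the data and coefficients that preserves the Oleinik inequality \eqref{Oleinik_inequality} with constants independent of the approximation parameter, the problem reduces to proving a uniform a priori estimate; existence then follows by passing to the limit in the approximating (strictly hyperbolic) problems, and uniqueness follows from the same estimate applied to the difference of two solutions.

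For the base energy estimate (the case $k=0$) the natural object is
\[
E(t) = \int_{\R^m}\!\left(|u_t|^2 + \sum_{i,j}a^{ij}(t,x)u_{x_i}\overline{u_{x_j}} + \lambda|u|^2\right)\,dx,
\]
where $\lambda$ is a large auxiliary constant chosen later. Differentiating in $t$ and using \eqref{Oleinik_CP} to eliminate $u_{tt}$, integration by parts produces on the one hand the \emph{good} term $\int \sum_{i,j} a^{ij}_t\, u_{x_i}\overline{u_{x_j}}\,dx$ coming from the time derivative of the principal part, and on the other hand a \emph{bad} first order contribution of the shape $2\,\mathrm{Re}\int \sum_i b^i u_{x_i}\,\overline{u_t}\,dx$. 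The Oleinik condition is precisely what tames the bad contribution: by Cauchy--Schwarz combined with the pointwise bound $t\alpha\,(\sum_i b^i\xi_i)^2 \le B\,\sum_{i,j} a^{ij}\xi_i\xi_j + \sum_{i,j}a^{ij}_t\xi_i\xi_j$ applied at $\xi_i = u_{x_i}$, the bad term is absorbed partly into $E(t)$ and partly into the good term (up to a harmless $t$-weight, controlled near $t=0$ thanks to the factor $\alpha t$ on the left of \eqref{Oleinik_inequality}). An application of Gronwall's lemma then yields the $L^2$-level version of \eqref{Oleinik_estimate}.

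The main obstacle, and where most of the technical work lies, is lifting this base estimate to higher Sobolev regularity $H^k$. Here one applies $\partial_t^{\rho}\partial_x^{\beta}$ to the equation and repeats the energy argument on the commuted system. Each commutator with the principal part produces new lower order terms; verifying that these still satisfy an Oleinik-type inequality is the technical heart of the proof and is exactly what dictates the precise differentiability orders assumed on $a^{ij}$, $b^i$, $b^0$, $c$ in the statement. The initial values of the higher-order energies are computed by solving the equation for $u_{tt}$ and iterating, which expresses $\partial_t^{\rho}u(0,\cdot)$ in terms of $g_0$, $g_1$ and derivatives of $f$, $a^{ij}$, $b^i$, $b^0$, $c$ at $t=0$; this is precisely why the right-hand side of \eqref{Oleinik_estimate} involves $L^2$-norms of $\partial_t^{\rho}\partial_x^{\beta}f(0,x)$ for $\rho\le p$, $|\beta|\le p+k+2$, as well as a sup-in-$t$ term on $[0,t_0]$. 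The final claim that a \emph{classical} solution exists then follows by Sobolev embedding as soon as $2(k-2)>m+1$, which guarantees $H^k \hookrightarrow C^2$.
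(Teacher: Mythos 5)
Theorem \ref{Oleinik_theorem} is quoted verbatim from Oleinik \cite{O70} and is not proved in this paper, so there is no proof in the paper for you to match against. Your road map --- regularise to a strictly hyperbolic family, derive a uniform a priori estimate via an energy functional, apply the Oleinik inequality \eqref{Oleinik_inequality} at $\xi_i=u_{x_i}$ to control the first-order terms, iterate with $\partial_t^\rho\partial_x^\beta$-commutators for the $H^k$ version, and invoke Sobolev embedding when $2(k-2)>m+1$ --- is a fair description of the Oleinik strategy and correctly explains why the hypotheses on derivatives of the coefficients and of $f$ at $t=0$ appear in the estimate.

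The one place your sketch is too casual is the treatment of the degeneracy. The form $\sum_{i,j}a^{ij}\xi_i\xi_j$ is only nonnegative, so the energy $E(t)$ you propose does not dominate $\Vert\nabla u\Vert_{L^2}^2$ from below, and a direct Gronwall argument on $E(t)$ does not by itself produce a Sobolev bound on $u$. Oleinik compensates with explicit time weights (this is where the integer parameter $p$ and the threshold $\alpha>(2p+6)^{-1}$ enter) and accepts a genuine loss of derivatives between data and solution, which is exactly what the $H^{k+p+4}$- and $H^{k+p+3}$-norms on the right of \eqref{Oleinik_estimate} record, as opposed to the $H^k$-norm of $u$ on the left. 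Flagging that loss of derivatives explicitly, rather than dismissing the weight as harmless, would bring your sketch closer to the actual argument; but since the paper merely cites the theorem, none of this represents a discrepancy with the paper itself.
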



We now consider the Cauchy problem 
\beq \label{Oleinik_CP_simple}
u_{tt}(t,x)-a(t)u_{xx}=0,
\eeq
\[
u(0,x)=g_0 (x), \quad u_t(0,x)=g_1 (x),
\]
where $x \in \R $, $t \in [0,T]$ and $a(t) \ge 0$ on $[0,T]$. By reformulating Theorem \ref{Oleinik_theorem} in this context we obtain the next statement.

\begin{theorem} \label{Oleinik_theorem_simple}
Assume that there exists a constant $D$ such that for the coefficient of \eqref{Oleinik_CP_simple}, the inequality
\beq	\label{Oleinik_inequality_simple}
0 \le  D a(t) + a_t(t)
\eeq
holds for any $t \in [0,T]$. Here p is an integer greater than or equal to $-1$, $D$ is some constant for $t \in [0,t_0]$, $t_0=const.>0$ and some positive constant for  $t \in [t_0, T]$. Let $a$ and its derivatives up to the order $k-2$, $k \ge 2$ with respect to  $t$ be bounded on $[0,T]$. Let the derivatives of the form $d^{\rho}/dt^{\rho}$ of $a$, where $\rho \in \N$, $\rho \le p+1$, be bounded for $t \in [0,t_0]$  and the derivatives  of the form $d^{\rho}/dt^{\rho}$, $\rho \le p$, be bounded for $t=0$. Suppose that the functions $g_0$, $g_1$ have compact support.
Then there exists a unique solution u of \eqref{Oleinik_CP_simple} and the estimate 
\beq \label{Oleinik_estimate_simple}
\begin{split}
\forall \tau \in [0,T], \quad \Vert u(\tau, \cdot) \Vert ^2_{H^k(\R)} \le & C (\Vert g_0 \Vert ^2_{H^{k+p+4}(\R)}+\Vert g_1 \Vert ^2_{H^{k+p+3}(\R)})
\end{split}
\eeq
holds provided that the norms of $g_0$ and $g_1$ on the right of \eqref{Oleinik_estimate_simple} are finite, where the constant $C>0$, depends on $\Vert \cdot \Vert_\infty$ on $[0,T]$ of the derivatives of the coefficient $a$, as stated above. If $2(k-2)>m+1$, then the classical solution to \eqref{Oleinik_CP_simple} exists.
\end{theorem}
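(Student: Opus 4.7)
The plan is to obtain Theorem \ref{Oleinik_theorem_simple} as a direct specialization of Theorem \ref{Oleinik_theorem} to the one-dimensional case ($m=1$) with a specific choice of coefficients. Namely, I would set $a^{11}(t,x):=a(t)$, $b^1(t,x):=0$, $b^0(t,x):=0$, $c(t,x):=0$, and $f(t,x):=0$ in the general Cauchy problem \eqref{Oleinik_CP}, and then check that every hypothesis and the conclusion of Theorem \ref{Oleinik_theorem} collapse exactly onto those of Theorem \ref{Oleinik_theorem_simple}.

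First, I would verify that the Oleinik condition \eqref{Oleinik_inequality} reduces to \eqref{Oleinik_inequality_simple} under these choices. Since all $b^i$ vanish, the left-hand side of \eqref{Oleinik_inequality} is identically zero, so any $\alpha>(2p+6)^{-1}$ is admissible, and the inequality becomes $0\le B a(t)\xi_1^2 + a_t(t)\xi_1^2$ for every $\xi_1\in\R$, which is equivalent to $0\le D a(t)+a_t(t)$ upon identifying $D=B$. This is precisely \eqref{Oleinik_inequality_simple}, with the stated distinction between the subintervals $[0,t_0]$ and $[t_0,T]$ inherited verbatim from the general statement.

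Next, I would observe that since $a$ depends only on $t$, all spatial derivatives of $a^{11}$ vanish, so the mixed bounds on $\partial^\rho_t\partial^\beta_x a^{ij}$ required by Theorem \ref{Oleinik_theorem} collapse to bounds on $d^\rho a/dt^\rho$ alone, matching exactly the hypotheses of Theorem \ref{Oleinik_theorem_simple}. The hypotheses on the lower-order coefficients $b^i$, $b^0$, $c$ and their derivatives are trivially satisfied because these functions vanish identically. Finally, since $f\equiv 0$ and there are no lower-order terms, all the integrals and norms involving $f$ on the right-hand side of \eqref{Oleinik_estimate} drop out, leaving precisely the right-hand side of \eqref{Oleinik_estimate_simple}, with the constant $C$ now depending only on the $L^\infty$-bounds of the derivatives of $a$ stated in the hypotheses. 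The existence of a classical solution under the sharper regularity assumption $2(k-2)>m+1=2$, i.e., $k>3$, is transferred directly from the corresponding clause of Theorem \ref{Oleinik_theorem}. There is no real analytical obstacle here: the only care needed is bookkeeping, checking that the general hypotheses genuinely specialize without hidden compatibility issues, in particular that the constant $B$ (with its separate values on $[0,t_0]$ and $[t_0,T]$) corresponds to the constant $D$ appearing in the simplified statement.
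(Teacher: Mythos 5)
Your proof is correct and takes essentially the same route as the paper, which simply invokes Theorem \ref{Oleinik_theorem} specialized to $m=1$, $a^{11}(t,x)=a(t)$, $b^i=b^0=c=f=0$. The bookkeeping you carry out — vanishing of the $b^i$-terms trivializing the left side of \eqref{Oleinik_inequality}, the drop of all spatial derivatives of the coefficient and all $f$-terms in \eqref{Oleinik_estimate}, and the identification $D=B$ with $k>3$ for the classical-solution clause — is exactly the reformulation the paper refers to.
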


\begin{remark}
Note that if $a,\,a_t \ge 0 $ then the inequality \eqref{Oleinik_inequality_simple} is automatically satisfied for all $D>0$ .
\end{remark}

Choosing $p=-1$ and $k=2$ in the above estimate, we obtain the following.
\begin{corollary}\label{Oleinik_corollary_k,p}
Assume that there exists a constant $D$ such that for the coefficient of \eqref{Oleinik_CP_simple}, the inequality
\beq	\label{Oleinik_inequality_simple_k,p}
0 \le  D a(t)+ a_t(t)
\eeq
holds for any $t \in [0,T]$. Here $D$ is some constant for $t \in [0,t_0]$, $t_0=const.>0$ and some positive constant for  $t \in [t_0, T]$. Let $a$  be bounded on $[0,T]$. Suppose that the functions $g_0,\, g_1$ have compact support.
Then there exists a unique solution u of \eqref{Oleinik_CP_simple} and the estimate 
\beq \label{Oleinik_estimate_simple_k,p}
\begin{split}
\forall \tau \in [0,T], \quad \Vert u(\tau, \cdot) \Vert ^2_{H^2(\R)} \le & C (\Vert g_0 \Vert ^2_{H^{5}(\R)}+\Vert g_1 \Vert ^2_{H^{4}(\R)})
\end{split}
\eeq
holds provided that the norms of $g_0$ and $g_1$ on the right of \eqref{Oleinik_estimate_simple_k,p} are finite, where the constant $C>0$, depends on $\Vert \cdot \Vert_\infty$ on $[0,T]$ of the coefficient $a$.   
\end{corollary}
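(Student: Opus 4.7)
The plan is to obtain Corollary \ref{Oleinik_corollary_k,p} as a direct specialisation of Theorem \ref{Oleinik_theorem_simple} by choosing the parameters $p=-1$ and $k=2$, so essentially no new analytic content is needed: one only has to check that each hypothesis and the stated estimate transform as claimed under this parameter choice.

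First I would translate the regularity assumptions on $a$. With $k=2$, the requirement that derivatives of $a$ up to order $k-2=0$ be bounded on $[0,T]$ is just the boundedness of $a$ itself on $[0,T]$, which is exactly the hypothesis stated in the corollary. With $p=-1$, the further condition that $d^\rho a/dt^\rho$ be bounded on $[0,t_0]$ for $\rho\le p+1=0$ again reduces to the boundedness of $a$, while the requirement that $d^\rho a/dt^\rho$ be bounded at $t=0$ for $\rho\le p=-1$ is vacuous, since the range of $\rho$ is empty. Thus the single assumption ``$a$ bounded on $[0,T]$'' captures the entire regularity content of Theorem \ref{Oleinik_theorem_simple} at these parameter values.

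Next I would verify that the Oleinik-type inequality takes exactly the stated form. Since equation \eqref{Oleinik_CP_simple} has no first-order terms, the left-hand side of \eqref{Oleinik_inequality_simple} is identically zero, so the inequality collapses to $0\le D\,a(t)+a_t(t)$ with the same dichotomy on the constant $D$ over $[0,t_0]$ and $[t_0,T]$ as in Theorem \ref{Oleinik_theorem_simple}, matching \eqref{Oleinik_inequality_simple_k,p} verbatim. No dependence on $\alpha$ or $p$ survives, because those parameters enter only through the dropped $b^i$-terms.

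Finally I would read off the estimate. With $f\equiv 0$ all the $f$-terms on the right-hand side of \eqref{Oleinik_estimate_simple} vanish, and the Sobolev indices become $k+p+4=5$ and $k+p+3=4$, yielding precisely \eqref{Oleinik_estimate_simple_k,p}; the constant $C$ depends only on $\|a\|_\infty$ on $[0,T]$, since this is the only bounded quantity appearing in the hypotheses after the reduction. The only point requiring any care is to confirm that the values $p=-1$, $k=2$ are admissible, which is immediate from the statement of Theorem \ref{Oleinik_theorem_simple} (it asks $p\ge -1$ and $k\ge 2$), so the specialisation is legitimate and no genuine obstacle arises. Note that the classical-solution clause of Theorem \ref{Oleinik_theorem_simple}, requiring $2(k-2)>m+1$, fails for $k=2$ and $m=1$, which is why the corollary only asserts the Sobolev estimate and not the existence of a $C^2$ solution.
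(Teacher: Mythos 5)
Your proposal is correct and follows exactly the paper's approach: the paper simply says ``Choosing $p=-1$ and $k=2$ in the above estimate, we obtain the following'' and states the corollary, and your write-up supplies the routine bookkeeping (reduction of the regularity hypotheses to boundedness of $a$, the Sobolev indices $k+p+4=5$ and $k+p+3=4$, and the observation that the classical-solution clause is not invoked) that the paper leaves implicit. The minor remarks you make about the $b^i$-terms and the $f$-terms vanishing actually describe the passage from Theorem \ref{Oleinik_theorem} to Theorem \ref{Oleinik_theorem_simple} rather than the specialisation at hand, but this does not affect the validity of the argument.
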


We conclude this section by focusing on two toy models where the coefficient $a(t)$ in the principal is a Heaviside function or a delta distribution. For simplicity, we set $f=0$.

\subsection{First model: $a(t)=H(t)$}
Let
\beq
\label{CP_gen_Oleinik}
\begin{split}
\partial_t^2u(t,x)-H(t-1)\partial_x^2 u(t,x)&=0,\qquad t\in[0,T], x\in\R,\\
u(0,x)&=g_0(x),\\
\partial_t u(0,x)&=g_1(x),
\end{split}
\eeq
where $g_0,\, g_1 \in \mathcal{C}_c^{\infty}(\R)$ and $H$ is the Heaviside function 
\begin{equation*}
H(t)=
\begin{cases}
1, \quad \text{for } t \ge 0 \\
0, \quad \text{for } t <0.
\end{cases}
\end{equation*}
By applying the regularisation methods described before, one can transform the Cauchy problem \eqref{CP_gen_Oleinik} to 
\beq
\label{CP_gen_reg}
\begin{split}
\partial_t^2u_\eps(t,x)-H_\eps(t-1)\partial_x^2 u_\eps(t,x)&=0,\qquad t\in[0,T], x\in\R,\\
u_\eps(0,x)&=g_{0}(x),\\
\partial_t u_\eps(0,x)&=g_{1}(x),
\end{split}
\eeq
where $H_{\eps}=H*\varphi_\eps$, $\varphi_\eps(t)=\frac{1}{\eps}\varphi(\frac{t}{\eps})$ and $\varphi \in \mathcal{C}_c^{\infty}(\R)$ with $\varphi \ge 0$ and $\int \varphi =1$. Note that since $g_0,\, g_1 \in \mathcal{C}_c^{\infty}(\R)$, we do not need to mollify them.

We now apply the results of Section \ref{section_Oleinik} to the Cauchy problem \eqref{CP_gen_reg}.

\begin{proposition} \label{Oleinik_estimate_Heaviside_proposition}
The solution net, $u_\eps$, of the Cauchy problem \eqref{CP_gen_reg} fulfils
\beq \label{Oleinik_estimate_Heaviside}
\begin{split}
\forall \tau \in [0,T], \quad \Vert u_{\eps}(\tau, \cdot) \Vert ^2_{H^2(\R)} \le & C (\Vert g_{0} \Vert ^2_{H^{5}(\R)}+\Vert g_{1}\Vert ^2_{H^{4}(\R)}),
\end{split}
\eeq
where $C$ is independent of $\eps$.
\end{proposition}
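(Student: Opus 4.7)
The plan is to verify that Corollary \ref{Oleinik_corollary_k,p} applies to the regularised problem \eqref{CP_gen_reg} with a constant that does not depend on $\eps$. To this end I would take $a_\eps(t) = H_\eps(t-1) = (H \ast \varphi_\eps)(t-1)$ and check the two ingredients Oleinik's result needs: the Oleinik inequality \eqref{Oleinik_inequality_simple_k,p} and an $L^\infty$ bound on the coefficient.

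First I would observe that since $\varphi \geq 0$ is a Friedrichs mollifier with $\int \varphi = 1$ and $H \geq 0$, the convolution $H_\eps \geq 0$ and $\Vert a_\eps \Vert_\infty = \Vert H_\eps\Vert_\infty \le \Vert \varphi_\eps\Vert_{L^1}\Vert H\Vert_\infty = 1$, uniformly in $\eps \in (0,1]$. Next, by direct differentiation,
\[
a_\eps'(t) = \frac{d}{dt}(H\ast\varphi_\eps)(t-1) = (H' \ast \varphi_\eps)(t-1) = (\delta \ast \varphi_\eps)(t-1) = \varphi_\eps(t-1) \geq 0,
\]
so $a_\eps' \geq 0$ on $[0,T]$. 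Hence the Oleinik inequality
\[
0 \le D\, a_\eps(t) + a_\eps'(t)
\]
holds on $[0,T]$ trivially for any $D > 0$ (say $D = 1$), again uniformly in $\eps$.

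Having verified both assumptions with constants independent of $\eps$, I would apply Corollary \ref{Oleinik_corollary_k,p} to \eqref{CP_gen_reg} with $p=-1$, $k=2$. The corollary yields, for every $\tau \in [0,T]$,
\[
\Vert u_\eps(\tau, \cdot)\Vert^2_{H^2(\R)} \le C_\eps\bigl(\Vert g_0\Vert^2_{H^5(\R)} + \Vert g_1\Vert^2_{H^4(\R)}\bigr),
\]
where $C_\eps$ depends only on $\Vert a_\eps \Vert_{L^\infty([0,T])}$ and on the constant $D$ in the Oleinik inequality. Since both of these are controlled uniformly in $\eps$ by the previous paragraph, we may replace $C_\eps$ with a single constant $C$ independent of $\eps$, which proves \eqref{Oleinik_estimate_Heaviside}.

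The only subtlety worth flagging is the uniformity of the constant $C$ coming out of Oleinik's estimate: a careful reading of the statement of Corollary \ref{Oleinik_corollary_k,p} (and its parent Theorem \ref{Oleinik_theorem_simple}) shows that $C$ is a function only of $T$, of $D$, and of $\Vert a\Vert_\infty$ on $[0,T]$ (no derivatives of $a$ enter, since $k=2$ and $p=-1$). As all of these quantities are uniform in $\eps$ for our specific choice $a_\eps = H_\eps(\cdot - 1)$, there is no hidden $\eps$-dependence, and this is the step I would be most careful about when writing the proof out in full.
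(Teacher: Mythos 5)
Your proposal is correct and follows essentially the same route as the paper: verify that $H_\eps, H_\eps' \ge 0$ so that Oleinik's inequality holds trivially, bound $\Vert H_\eps(\cdot-1)\Vert_\infty \le 1$ uniformly in $\eps$, and then invoke Corollary \ref{Oleinik_corollary_k,p}. The only cosmetic difference is that you spell out $a_\eps' = \varphi_\eps(\cdot - 1)$ explicitly and present the $L^\infty$ bound before the Oleinik inequality rather than after, but the substance is identical.
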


\begin{proof}
We note that $H_{\eps},\,H'_{\eps}\ge 0$ and hence for any $D>0$, the condition \eqref{Oleinik_inequality_simple_k,p} is automatically satisfied. Hence from Corollary \ref{Oleinik_corollary_k,p}, we get the estimate \eqref{Oleinik_estimate_Heaviside} where $C$ depends on $\Vert H_\eps(\cdot -1) \Vert_\infty$ on $[0,T]$. We now calculate $\Vert H_\eps(\cdot -1) \Vert_\infty$ to show that it is independent of $\eps$.
\begin{align*}
\Vert H_\eps (\cdot -1) \Vert_\infty &= \Vert H* \varphi_\eps (\cdot -1) \Vert_\infty= \sup_{t \in [0,T]} \left|\int_\R H(t-1 -\tau) \varphi_\eps(\tau) d\tau \right| \\
&\le  \int_\R \sup_{t \in [0,T]}  \left| H(t-1 -\tau) \varphi_\eps(\tau) \right| d\tau 
 = \int_\R 1 \times \left| \varphi_\eps(\tau) \right| d\tau \\
&=  \int_\R  \varphi_\eps(\tau)  d\tau=1.
\end{align*}
Therefore $\Vert H_\eps (\cdot-1) \Vert_\infty \le 1$ and hence $C$ can be chosen independent of $\eps$.
\end{proof}

\begin{remark}
The proof of Proposition \ref{Oleinik_estimate_Heaviside_proposition}  holds for more general initial data $g_0,\,g_1$. For instance one can take $g_0 \in H^{5}(\R)$, $g_1 \in H^{4}(\R)$.
\end{remark}

\begin{remark}\label{rmk:delta}
The argument of Proposition \ref{Oleinik_estimate_Heaviside_proposition} fails if the regularisation of the dirac delta distribution $\delta_\eps$ is considered in the Cauchy problem \eqref{CP_gen_reg} instead of $H_{\eps}$. Indeed, in this case we cannot get a constant $C$ independent of $\eps$. In fact,
\begin{align*}
\Vert \delta_\eps (\cdot -1) \Vert_\infty 
= \Vert \delta * \varphi_\eps (\cdot -1) \Vert_\infty 
 = \Vert \varphi_\eps (\cdot -1) \Vert_\infty 
= \left \Vert\frac{1}{\eps}\varphi\left(\frac{\cdot -1}{\eps}\right) \right \Vert_\infty 
 \le\frac{1}{\eps}M_{\varphi},
\end{align*}
where we have used that $\varphi \in C_c^{\infty}(\R)$ and hence it is bounded by $M_{\varphi}$ on $[0,T]$ and that $\delta * \varphi_\eps= \varphi_\eps$, from distribution theory. Therefore, when the coefficient is $\delta$ we cannot achieve boundedness.
\end{remark}

\subsubsection{Heaviside function}\label{sect:heavisideFunction}
We consider again the Cauchy problem \eqref{CP_gen_Oleinik}.
Classically, this Cauchy problem can be solved piecewise. First for $t<1$ and then for $t>1$ and taking the final values at $t=1$ as initial values for $t>1$.
For $t<1$, the equation becomes $u_{tt}=0$. Using the initial conditions we obtain that 
\[u(x,t)=tg_1(x)+g_0(x).\]
Therefore, $u(x,1)=g_1(x)+g_0(x)$ and $u_t(x,1)=g_1(x)$ which are used as initial conditions for $t>1$. For $t>1$, using  d'Alembert's formula we obtain that  
\begin{align*}
u(x,t)=&\frac{g_1(x+t-1)+g_1(x-t+1)+g_0(x+t-1)+g_0(x-t+1)}{2} +\frac{1}{2}\int_{x-t+1}^{x+t-1} g_1(s)ds.
\end{align*}
Combining these two solutions together we obtain the unique piecewise distributional solution $\bar{u}$, to the Cauchy problem \eqref{CP_gen_Oleinik}.

This proof is modelled on the proof in \cite{DHO13}. The new ingredient is that we are dealing now with the Heaviside function where before the jump it is equal to zero, whereas in \cite{DHO13} it was positive.
\begin{theorem} \label{thm_convergence}
For the Cauchy problem \eqref{CP_gen_Oleinik}, every very weak solution converges to the piecewise distributional solution $\bar{u}$, as $\eps \to 0$. 
\end{theorem}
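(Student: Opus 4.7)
The plan is to combine the uniform $H^2$ bound of Proposition \ref{Oleinik_estimate_Heaviside_proposition} with an explicit identification of $u_\eps$ with $\bar{u}$ on the region where the mollified coefficient vanishes. Let $R>0$ be such that $\supp\,\varphi \subseteq [-R,R]$. Then $H_\eps(t-1)=0$ for $t \le 1-R\eps$ and $H_\eps(t-1)=1$ for $t \ge 1+R\eps$. On $[0,1-R\eps]$ the regularised equation \eqref{CP_gen_reg} reduces to $\partial_t^2 u_\eps = 0$, and the initial data force $u_\eps(t,x) = t\,g_1(x) + g_0(x) = \bar{u}(t,x)$ identically. Symmetrically, for $t \ge 1+R\eps$, $u_\eps$ solves the free wave equation $\partial_t^2 u_\eps - \partial_x^2 u_\eps = 0$.

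Next, I would use the uniform estimate of Proposition \ref{Oleinik_estimate_Heaviside_proposition} to pass to a limit: the family $(u_\eps)_\eps$ is bounded in $L^\infty([0,T]; H^2(\R))$, and since $|H_\eps(t-1)| \le 1$, the equation yields a uniform bound of $\partial_t^2 u_\eps$ in $L^\infty([0,T]; L^2(\R))$. An Aubin--Lions type compactness argument then produces a subsequence converging in $C([0,T]; H^s_{\rm loc}(\R))$ to some $u^*$ for every $s<2$, with $\partial_t u_\eps$ converging analogously. Because $H_\eps(t-1) \to H(t-1)$ pointwise on $[0,T]\setminus\{1\}$ and boundedly, passing to the limit in the distributional sense gives $\partial_t^2 u^* = H(t-1)\partial_x^2 u^*$ on $[0,T]\times \R$.

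The crucial step is to identify $u^*$ with $\bar{u}$ via matching at $t=1$. The explicit formula $u_\eps(t,\cdot) = tg_1+g_0$ on $[0,1-R\eps]$ gives, in the limit $\eps\to 0$, the values $u^*(1,x) = g_0(x)+g_1(x)$ and $\partial_t u^*(1,x) = g_1(x)$. These are precisely the initial conditions used to derive $\bar{u}$ for $t>1$ via d'Alembert's formula in Section~\ref{sect:heavisideFunction}. Since the free wave equation with these data has a unique solution, $u^* = \bar{u}$ on $[1,T]$ as well, hence on all of $[0,T]$. Uniqueness of the limit then forces the whole family $(u_\eps)_\eps$ (not just a subsequence) to converge to $\bar{u}$.

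The delicate point will be controlling the transition layer $[1-R\eps, 1+R\eps]$, which has width $O(\eps)$: within it $H_\eps$ varies between $0$ and $1$, but the uniform bound on $\partial_t^2 u_\eps$ in $L^2$ makes $\partial_t u_\eps$ uniformly Lipschitz in $t$ with values in $L^2(\R)$, so the oscillation of $(u_\eps, \partial_t u_\eps)$ across the layer is $O(\eps)$ and vanishes in the limit. This is precisely what delivers the continuous matching at $t=1$ and ensures that the argument is insensitive to the specific choice of positive mollifier $\varphi$, so every very weak solution produced by such a regularisation converges to the same piecewise distributional solution $\bar{u}$.
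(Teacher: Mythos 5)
Your proof is correct in spirit and lands on the same structure as the paper's: extract a uniformly bounded family, get compactness, identify the limit, then promote subsequential to full convergence. The differences are more than cosmetic though. The paper runs an explicit energy estimate (multiplying by $u_{\eps,t}$, using $H(t-1)^2=H(t-1)$, and showing $\int_0^T|H_\eps H_\eps'|\,dt$ is $\eps$-uniformly bounded) to obtain boundedness of $(u_\eps)_\eps$ in $C^1([0,T],L^2(\R))$, then uses the Mean Value Theorem plus Arzel\`a--Ascoli to get relative compactness in $C^1([0,T],L^2(\R))$, and identifies the limit $\tilde u$ as the piecewise distributional solution by noting that $H_\eps$ is eventually constant on compact subintervals of $(0,1)$ and $(1,T)$. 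You instead exploit the exact identity $u_\eps(t,\cdot)=t g_1+g_0$ on $[0,1-R\eps]$, invoke Aubin--Lions, and do the matching at $t=1$ through the $O(\eps)$-width transition layer. Your observation that $u_\eps$ is \emph{exactly} $\bar u$ below the layer is a clean simplification the paper does not state this explicitly.

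One step you should tighten: ``with $\partial_t u_\eps$ converging analogously'' via Aubin--Lions needs a compact-embedding hypothesis on $\partial_t u_\eps$ itself, i.e.\ a uniform $H^1_{\rm loc}$-in-$x$ bound on $\partial_t u_\eps$, which Proposition~\ref{Oleinik_estimate_Heaviside_proposition} (stated with $k=2$ for $u_\eps$ only) does not directly provide. You either need to raise the order in Oleinik's estimate (take $k\ge 3$, legitimate since $g_0,g_1\in C_c^\infty$) or, as the paper does, derive an energy estimate for $\partial_t u_\eps$ and argue Arzel\`a--Ascoli at the $C^1$ level. Your transition-layer Lipschitz argument can actually do much of the work here: $\partial_t u_\eps$ equals $g_1$ for $t\le 1-R\eps$, and the Lipschitz bound coming from $\|\partial_t^2 u_\eps\|_{L^\infty L^2}\le \|u_{\eps,xx}\|_{L^\infty L^2}$ forces $\partial_t u_\eps(1,\cdot)\to g_1$ strongly in $L^2$; but to conclude that this limit \emph{is} the one-sided trace $\partial_t u^*(1^+)$ you still need some uniform-in-$t$ mode of convergence of $\partial_t u_\eps$ along the chosen subsequence, which is precisely what the missing bound would supply. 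So the gap is fillable, but it is a genuine gap as written.
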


\begin{proof}
From assumption, $g_0,\, g_1 \in \mathcal{C}_c^{\infty}(\R)$. Let $(u_\eps)_\eps$ be a very weak solution of the Cauchy problem \eqref{CP_gen_Oleinik} that vanishes for $x$ outside some compact set, independently of $t$, $t \in [0,T]$. By construction we have that $u_\eps\in\mathcal{C}^{\infty}([0,T], H^{\infty}(\R))$. Therefore we can use energy estimates. We first note that $H(t-1)= H(t-1)^2$, therefore our equation can be written as $u_{tt}-H(t-1)^2 u_{xx}=0$.
Following the argument of the proof of Theorem 6.1 in \cite{DHO13}, we now multiply the regularised equation $u_{\eps,tt}-H_{\eps}(t-1)^2 u_{\eps,xx}=0$ by $u_{\eps,t}$ and integrating by parts we have
\[
\int_\R \left( u_{\eps,tt}u_{\eps,t}+H_{\eps}(t-1)^2 u_{\eps,x}u_{\eps,xt}\right)dx=0.
\]
Noting that 
\[
\frac{1}{2}\frac{\partial}{\partial t} \left( H_{\eps}(t-1)u_{\eps,x} \right)^2 = H_{\eps}(t-1)^2 u_{\eps,x}u_{\eps,xt} + H_{\eps}(t-1)H'_{\eps}(t-1) u_{\eps,x}^2,
\]
we obtain that
\[
\frac{1}{2}\frac{\partial} {\partial t} \int_\R\left( |u_{\eps,t}|^2 + H_{\eps}(t-1)^2 |u_{\eps,x}|^2\right)dx=\int_\R H_{\eps}(t-1)H'_{\eps}(t-1) |u_{\eps,x}|^2 dx.
\]
We therefore have the following energy estimate
\begin{align}\label{energy_estimates_Deguchi}
 \int_\R\left( |u_{\eps,t}|^2 + H_{\eps}(t-1)^2 |u_{\eps,x}|^2\right)dx \le& \int_\R \left( |g_1|^2 + H_{\eps}(t-1)^2|g_{0,x}|^2 \right) dx\\
 &+2\int^t_0 |H_{\eps}(t-1)H'_{\eps}(t-1)|\int_\R |u_{\eps,x}|^2 dxdt. \nonumber
\end{align}
From the estimates obtained by employing Oleinik's result in \eqref{Oleinik_estimate_Heaviside}, we have that $\int_\R|u_{\eps,x}(t,x)|^2dx$ is bounded on $[0,T]$. We also note that,
\begin{align*}
&\int_\R | H_\eps(t-1) H'_\eps(t-1) |dt =\int_\R | H_\eps(t-1) (\delta \ast \varphi_\eps)(t-1)| dt \\
&=\int_\R \left|H_\eps(t-1)\, \frac{1}{\eps} \varphi\left(\frac{t-1}{\eps}\right)\right| dt =\int_\R |H_\eps(\eps y) \varphi(y)| dy\le \int_\R |\varphi(y)| dy < \infty .
\end{align*}
Noting that
\begin{align*}
\int_\R |u_{\eps,t}|^2 dx \le \int_\R\left( |u_{\eps,t}|^2 + H_{\eps}(t-1)^2 |u_{\eps,x}|^2\right)dx
\end{align*}
and applying the energy estimates \eqref{energy_estimates_Deguchi}, we conclude that $\int_\R|u_{\eps,t}(t,x)|^2dx$ is bounded on $[0,T]$. We therefore have that $(u_\eps)_\eps$ is bounded in $\mathcal{C}^1([0,T], L^2(\R))$. By the Mean Value Theorem, $(u_\eps)_\eps$ is equicontinuous in  $\mathcal{C}([0,T], L^2(\R))$. Since $(u_\eps)_\eps$ is also bounded in $\mathcal{C}([0,T], L^2(\R))$, by Arzela-Ascoli, $(u_\eps)_\eps$ is relatively compact in $\mathcal{C}([0,T], L^2(\R))$. Furthermore, by \eqref{Oleinik_estimate_Heaviside} we have that  $\int_\R|u_{\eps,xx}(t,x)|^2dx$ is bounded on $[0,T]$. By the differential equation we get that $\int_\R|u_{\eps,tt}(t,x)|^2dx$ is also bounded on $[0,T]$. By the same argument as above, we obtain that $(u_\eps)_\eps$ is relatively compact in $\mathcal{C}^1([0,T], L^2(\R))$.
Therefore, there exists a subsequence $(u_{\eps_k})_k$ such that
\[
\lim_{k \to \infty} (u_{\eps_k})_k = \tilde{u} \in  \mathcal{C}^1([0,T], L^2(\R)) \subset \mathcal{C}^1([0,T], \mathcal{D}'(\R)) .
\]
But on every compact subinterval of $(0,1)$ and of $(1,T)$, $H_\eps$ is identically equal to 0 or 1, when $\eps$ is small enough. Therefore $\tilde{u}$ is a distributional solution of the Cauchy problem \eqref{CP_gen_reg} on both strips. Since $\tilde{u} \in \mathcal{C}([0,T],\mathcal{D}'(\R))$, so is $\tilde{u}_{xx}$ and hence from the equation, so is also $\tilde{u}_{tt}$. We therefore have,
\[
\tilde{u} \in \left( \mathcal{C}^2([0,1], \mathcal{D}'(\R))\oplus \mathcal{C}^2([1,T], \mathcal{D}'(\R)) \right).
\]
That is, $\tilde{u}$ is the unique piecewise distributional solution to the Cauchy problem \eqref{CP_gen_reg}. Therefore $\tilde{u}=\bar{u}$. Since $(u_\eps)_\eps$ is equicontinuous in  $\mathcal{C}^1([0,T], L^2(\R))$, we have that the whole net $(u_\eps)_\eps$ converges to $\tilde{u}=\bar{u}$.
\end{proof}

\subsubsection{Numerical model}
We solve the Cauchy problem \eqref{CP_gen_reg} numerically using the Lax–Friedrichs method (see, e.g., \cite{LV92}) after reformulating it as an equivalent first-order system. We consider $t \in [0.8,2]$, $x \in [0,2]$, and periodic initial conditions $g_0(x) = \sin(2\pi x)$ and $g_1(x) = \cos(2\pi x)$. For the space discretisation, we set the discretisation step $\Delta x= 0.0007$, and for the time discretisation we choose $\Delta t= \frac{\eps}{10}$ when $H_\eps (t-1)=0$ and we take $\Delta t$ adaptively when $H_\eps (t-1)\not=0$ to ensure that the Courant number remains equal to $1$. We compute numerical solutions $u_\eps$ for various values of $\eps$ and we compare them with the piecewise distributional solution $\bar{u}$ obtained as in section \ref{sect:heavisideFunction}. More precisely, we compute the norm $\|u_\eps - \bar{u} \|_{L^2([0,2])}$ at $t=2$ and show that it tends to $0$ for $\eps \to 0$ according to Theorem \ref{thm_convergence}.

First, we consider the mollifier $\varphi_{1,\eps}(t)=\frac{1}{\eps}\,\varphi_1(\frac{t}{\eps})$ with
\[
\varphi_{1}(t) = 
\begin{cases*}
\frac{1}{0.443994} e^{\frac{1}{(t^2 -1)}}, & if $\lvert t \rvert < 1$  \\
                                        0, & if $\lvert t \rvert \ge 1.$
\end{cases*}
\]
Figure \ref{fig:numerics1} shows the exact solution $\bar{u}$ and the solutions $u_\eps$ at $t=2$ for $\eps = 0.1$ and $\eps=0.05$ (left), and the computed $L^2$ error norm for various values of $\eps$ (right). We can see that the solutions $u_\eps$ better approach the exact solution $\bar{u}$ as $\eps$ is reduced and that the error norm decreases for $\eps \to 0$ as expected.

\begin{figure}[bht]
\centerline{%
\includegraphics[width=0.49\textwidth]{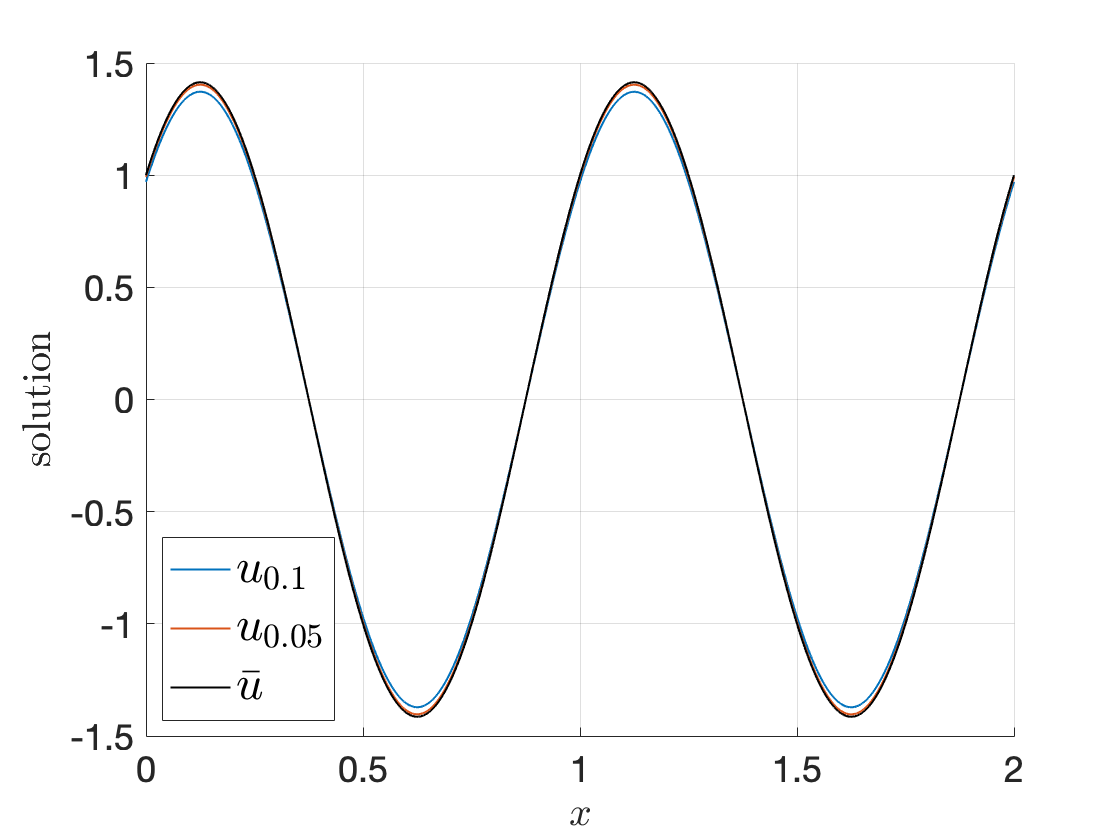}
\hfill
\includegraphics[width=0.49\textwidth]{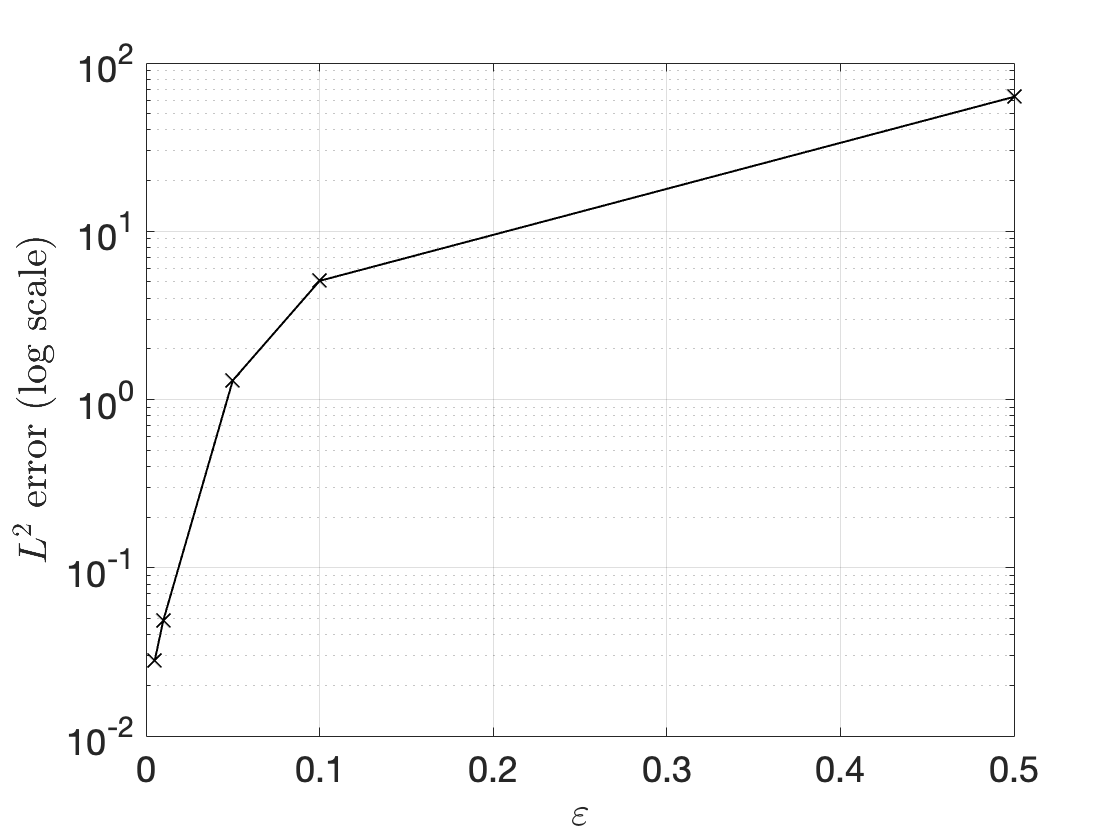}
}
\caption{Numerical tests with mollifier $\varphi_{1,\eps}(t)$: exact solution $\bar{u}$ and numerical approximations $u_\eps$ for $\eps=0.1,0.05$ (left); norm $\|u_\eps - \bar{u} \|_{L^2([0,2])}$ at $t=2$ versus $\eps$ (right).}
\label{fig:numerics1}
\end{figure}

To verify that results are independent of the choice of the mollifier, we repeated the computations taking now the mollifier $\varphi_{2,\eps}(t)=\frac{1}{\eps}\,\varphi_2(\frac{t}{\eps})$ with
\[
\varphi_{2}(t) = 
\begin{cases*}
\frac{1}{0.887988} e^{\frac{1}{(t^2 -1)}}, & if $\lvert t \rvert < 2$, \\
                                        0, & if $\lvert t \rvert\ge 2.$
\end{cases*}
\]

Also in this case, the solutions $u_\eps$ better approach the exact solution $\bar{u}$ when $\eps \to 0$ and the norm $\|u_\eps - \bar{u} \|_{L^2([0,2])}$ tends to $0$ for $\eps \to 0$ as shown in Figure \ref{fig:numerics2}.

\begin{figure}[bht]
\centerline{%
\includegraphics[width=0.49\textwidth]{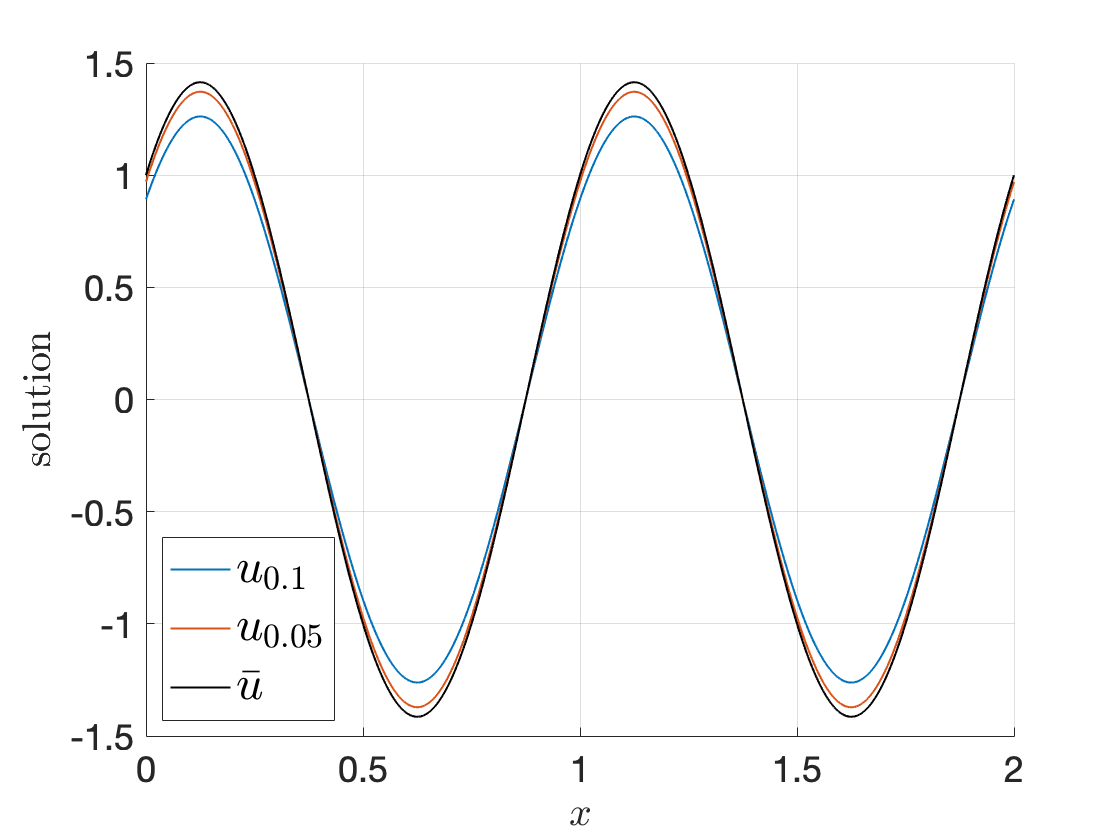}
\hfill
\includegraphics[width=0.49\textwidth]{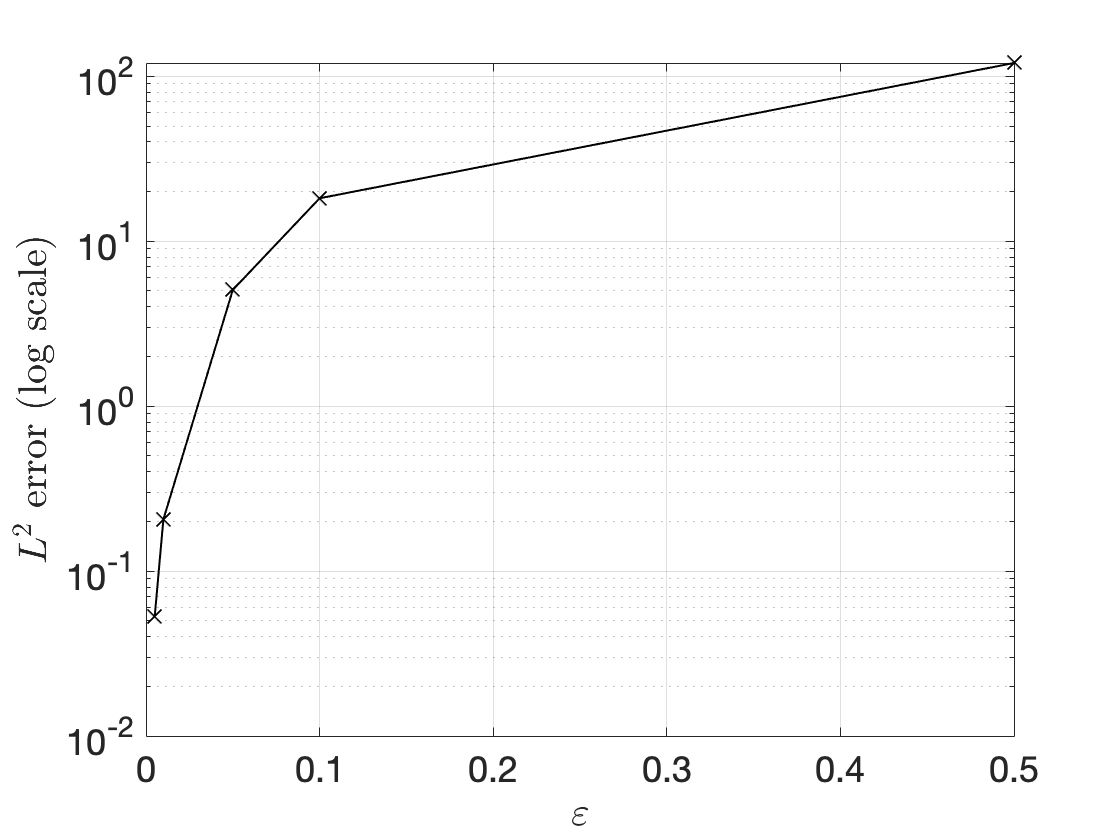}
}
\caption{Numerical tests with mollifier $\varphi_{2,\eps}(t)$: exact solution $\bar{u}$ and numerical approximations $u_\eps$ for $\eps=0.1,\,0.05$ (left); norm $\|u_\eps - \bar{u} \|_{L^2([0,2])}$ at $t=2$ versus $\eps$ (right).}
\label{fig:numerics2}
\end{figure}

\subsection{Second model: $a(t)=\delta$}

Finally, we numerically study the problem \eqref{CP_gen_reg} where the Heaviside function $H_\eps$ is replaced by a regularised delta distribution $\delta_\eps$. In this case, according to Remark \ref{rmk:delta}, the argument of Proposition \ref{Oleinik_estimate_Heaviside_proposition} does not hold so that, in particular, it is not possible to identify a constant $C$ independent of $\eps$ such that inequality \eqref{Oleinik_estimate_Heaviside} is satisfied. Considering the same setting and discretisation as in the previous tests, we solve \eqref{CP_gen_reg} with a delta distribution $\delta_\eps$ regularised using the mollifier $\varphi_{1,\eps}$ to obtain numerical solutions $u_\eps$ for various values of $\eps$. Then, we compute the ratio
\begin{equation}\label{eq:ratioDelta}
\frac{\|u_\eps(\tau,\cdot)\|_{{L^2}([0,2])}^2}{\|g_0\|_{H^5([0,2])}^2 + \|g_1\|_{H^4([0,2])}^2}
\end{equation}
at $\tau=2$ and we show that this ratio is not bounded independently of $\eps$ when $\eps \to 0$. The ratios computed for several values of $\eps$ are plotted in Fig. \ref{fig:numerics3}, and they seem to confirm that boundedness cannot be achieved in the case of coefficients as singular as delta.  

\begin{figure}[!htb]
\centerline{%
\includegraphics[width=0.49\textwidth]{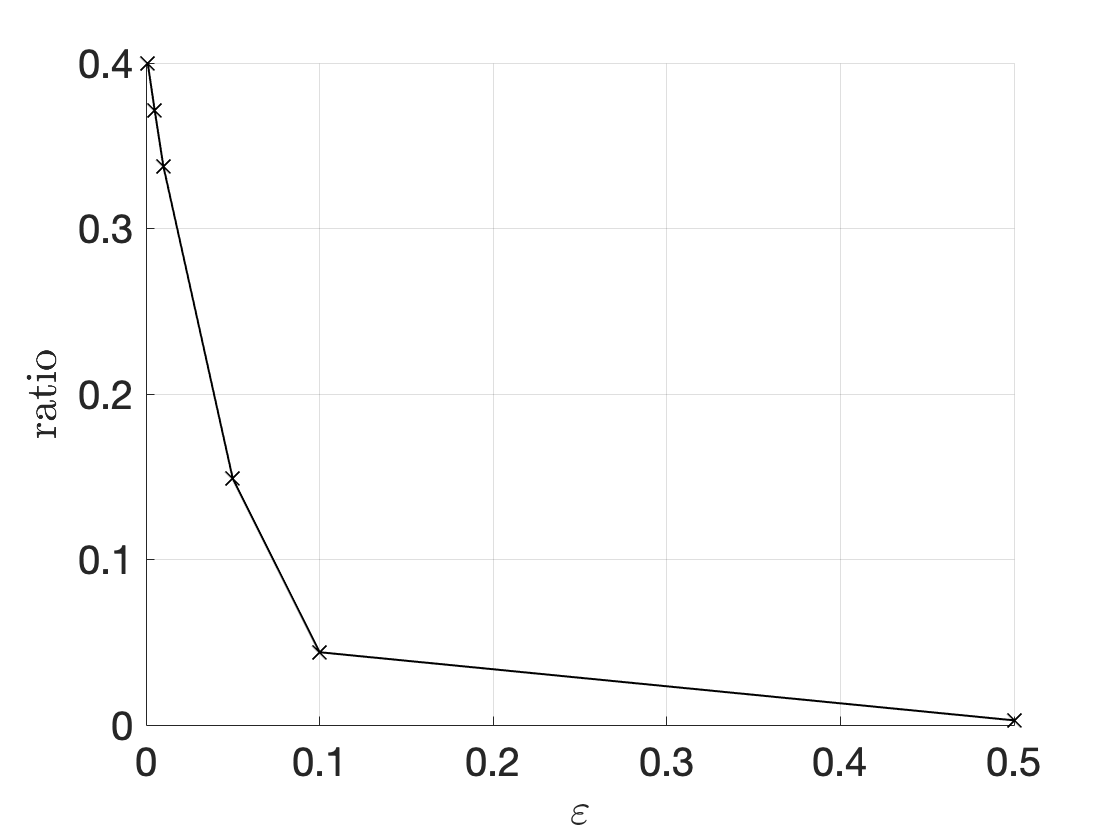}
}
\caption{Ratios \eqref{eq:ratioDelta} computed for different values of $\eps$ with $u_\eps$ the solution of \eqref{CP_gen_reg} with coefficient $\delta_\eps$.}
\label{fig:numerics3}
\end{figure}

\section{appendix: Quasi-symmetriser} \label{section _quasi}

This appendix is devoted to the general definition of quasi-symmetriser for a matrix in Sylvester form. We refer the reader to \cite{KS, GR:12} for more details.

In the sequel, $A(\lambda)$ is an $m\times m$ Sylvester matrix with real eigenvalues $\lambda_l$, i.e.,
\[
A(\lambda)=\left(
    \begin{array}{ccccc}
      0 & 1 & 0 & \dots & 0\\
      0 & 0 & 1 & \dots & 0 \\
      \dots & \dots & \dots & \dots & 1 \\
      -\sigma_m^{(m)}(\lambda) & -\sigma_{m-1}^{(m)}(\lambda) & \dots & \dots & -\sigma_1^{(m)}(\lambda) \\
    \end{array}
  \right),
\]
where
\[
\sigma_h^{(m)}(\lambda)=(-1)^h\sum_{1\le i_1<...<i_h\le m}\lambda_{i_1}...\lambda_{i_h}
\]
for all $1\le h\le m$. In the sequel $\mathcal{P}_m$ is the class of permutations of $\{1,\,\dots,\,m\}$, $\lambda_\rho=(\lambda_{\rho_1},\,\dots,\,\lambda_{\rho_m})$ with $\lambda\in\R^m$ and $\rho\in\mathcal{P}_m$, $\pi_i\lambda=(\lambda_1,\,\dots,\,\lambda_{i-1},\,\lambda_{i+1},\,\dots,\,\lambda_m)$ and $\lambda'=\pi_m\lambda=(\lambda_1,\,\dots,\,\lambda_{m-1})$. Following Section 4 in \cite{KS}, we have the following definition.

\begin{definition}
The quasi-symmetriser of $A(\lambda)$ is the Hermitian matrix
\[
Q^{(m)}_\delta(\lambda)=\sum_{\rho\in\mathcal{P}_m} P_\delta^{(m)}(\lambda_\rho)^\ast P_\delta^{(m)}(\lambda_\rho),
\]
where $\delta\in(0,1]$, $P_\delta^{(m)}(\lambda)=H^{(m)}_\delta P^{(m)}(\lambda)$, $H_\delta^{(m)}={\rm diag}\{\delta^{m-1},\,\dots,\,\delta,1\}$ and the matrix $P^{(m)}(\lambda)$ is defined inductively by $P^{(1)}(\lambda)=1$ and
\[
P^{(m)}(\lambda)=\left(
    \begin{array}{ccccc}
      \, & \, & \, & \, & 0\\
      \, & \, & P^{(m-1)}(\lambda') & \, & \vdots \\
      \, & \, & \, & \, & 0 \\
      \sigma_{m-1}^{(m-1)}(\lambda') & \dots & \dots & \sigma_1^{(m-1)}(\lambda') & 1 \\
    \end{array}
  \right).
\]
\end{definition}

Note that $P^{(m)}(\lambda)$ is depending only on $\lambda'$. 

{\bf Notations:} $W^{(m)}_i(\lambda)$ denotes the row vector
\[
\big(\sigma_{m-1}^{(m-1)}(\pi_i\lambda),\, \dots, \,\sigma_1^{(m-1)}(\pi_i\lambda),\,1\big),\quad 1\le i\le m,
\]
and $\mathcal{W}^{(m)}(\lambda)$ the matrix with row vectors $W^{(m)}_i$.

In the next proposition we collect the main properties of the quasi-symmetriser $Q^{(m)}_\delta(\lambda)$. For a detailed proof we refer the reader to Propositions 1 and 2 in \cite{KS} and to Proposition 1 in \cite{dASpa:98}. Note that for $m\times m$ matrices $A_1$ and $A_2$ the notation $A_1\le A_2$ means $(A_1v,v)\le (A_2v,v)$ for all $v\in\C^m$ with $(\cdot,\cdot)$ the scalar product in $\C^m$.

\begin{proposition}
\label{prop_qs}
\leavevmode
\begin{itemize}
\item[(i)] The quasi-symmetriser $Q_\delta^{(m)}(\lambda)$ can be written as
\[
Q_0^{(m)}(\lambda)+\delta^2 Q_1^{(m)}(\lambda)+...+\delta^{2(m-1)}Q_{m-1}^{(m)}(\lambda),
\]
where the matrices $Q^{(m)}_i(\lambda)$, $i=1,\,\dots,\,m-1,$ are non-negative and Hermitian with
entries being symmetric polynomials in $\lambda_1,\,\dots,\,\lambda_m$.
\item[(ii)] There exists a function $C_m(\lambda)$ bounded for
bounded $|\lambda|$ such that
\[
C_m(\lambda)^{-1}\delta^{2(m-1)}I\le Q^{(m)}_\delta(\lambda)\le C_m(\lambda)I.
\]
\item[(iii)] We have
\[
|Q_\delta^{(m)}(\lambda)A(\lambda)-A(\lambda)^\ast Q_\delta^{(m)}(\lambda)|\le C_m(\lambda)\delta Q_\delta^{(m)}(\lambda).
\]
\item[(iv)] For any $(m-1)\times(m-1)$ matrix $T$ let $T^\sharp$ denote the $m\times m$ matrix
\[
\left(
    \begin{array}{cc}
    T & 0\\
    0 & 0 \\
    \end{array}
  \right).
\]
Then, $Q_\delta^{(m)}(\lambda)=Q_0^{(m)}(\lambda)+\delta^2\sum_{i=1}^m Q_\delta^{(m-1)}(\pi_i\lambda)^\sharp$.
\item[(v)] We have
\[
Q_0^{(m)}(\lambda)=(m-1)!\mathcal{W}^{(m)}(\lambda)^\ast \mathcal{W}^{(m)}(\lambda).
\]
\item[(vi)] We have
\[
\det Q_0^{(m)}(\lambda)=(m-1)!\prod_{1\le i<j\le m}(\lambda_i-\lambda_j)^2.
\]
\item[(vii)] There exists a constant $C_m$ such that
\[
q_{0,11}^{(m)}(\lambda)\cdots q_{0,mm}^{(m)}(\lambda)\le C_m\prod_{1\le i<j\le m}(\lambda^2_i+\lambda^2_j).
\]
\end{itemize}
\end{proposition}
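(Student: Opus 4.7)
The plan is to prove the seven properties in the order that reveals their interdependence: first the fundamental recursion (iv), then (v), (i) as a by-product, then (vi) and (vii) as algebraic consequences of (v), then the norm bound (ii), and finally the "near-symmetriser" estimate (iii), which will be the main obstacle.

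The starting point is the key identity (iv). Observe that $H_\delta^{(m)} = \mathrm{diag}(\delta H_\delta^{(m-1)},\,1)$ and that, by the inductive definition of $P^{(m)}$,
\[
P^{(m)}(\lambda)=\begin{pmatrix} P^{(m-1)}(\lambda') & 0 \\ \bigl(W_m^{(m)}(\lambda)\bigr)_{1:m-1} & 1 \end{pmatrix},
\]
so that $P_\delta^{(m)}(\lambda)=\bigl(\begin{smallmatrix} \delta P_\delta^{(m-1)}(\lambda') & 0 \\ (W_m^{(m)})_{1:m-1} & 1\end{smallmatrix}\bigr)$. Multiplying by its conjugate transpose and summing over $\rho\in\mathcal{P}_m$ splits into two pieces: a $\delta^2$-prefactored piece that, when grouped by which index $i$ is sent to position $m$ by $\rho$, becomes $\delta^2\sum_{i=1}^m Q_\delta^{(m-1)}(\pi_i\lambda)^\sharp$; and a $\delta$-independent piece that equals $\sum_\rho W_m^{(m)}(\lambda_\rho)^\ast W_m^{(m)}(\lambda_\rho)$. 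Since $W_m^{(m)}$ depends on $\lambda_\rho$ only through symmetric functions of $\pi_m\lambda_\rho$, the last sum collapses to $(m-1)!\sum_{i=1}^m W_i^{(m)}(\lambda)^\ast W_i^{(m)}(\lambda)=(m-1)!\,\mathcal{W}^{(m)*}\mathcal{W}^{(m)}$, which proves both (iv) and (v) simultaneously. Property (i) is then immediate by induction on $m$ from (iv), since $Q_0^{(m)}$ is a sum of squares (hence Hermitian non-negative with symmetric-polynomial entries) and the recursion preserves these features.

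Properties (ii), (vi) and (vii) follow from (v) and the closed form $Q_\delta=\sum_\rho P(\lambda_\rho)^\ast H_\delta^2 P(\lambda_\rho)$. For (vi), $\det\mathcal{W}^{(m)}$ is (up to an easy sign) the Vandermonde determinant $\prod_{i<j}(\lambda_i-\lambda_j)$ because the rows of $\mathcal{W}^{(m)}$ encode the coefficients of the polynomials $\prod_{k\neq i}(x-\lambda_k)$; combined with the $(m-1)!$ in (v) this yields the stated formula. For (ii), the upper bound follows because each entry of $Q_\delta$ is a polynomial in $\lambda,\delta$, so bounded for $|\lambda|$ bounded and $\delta\in(0,1]$; the lower bound uses $H_\delta^2\ge\delta^{2(m-1)}I$ to write $(Q_\delta v,v)\ge\delta^{2(m-1)}\sum_\rho|P(\lambda_\rho)v|^2$, and since each $P(\lambda_\rho)$ is unit lower triangular (hence invertible with inverse bounded by a polynomial in $\lambda$), the sum dominates $C_m(\lambda)^{-1}|v|^2$. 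Property (vii) is an algebraic estimate: from (v) each $q_{0,ii}^{(m)}$ is $(m-1)!\sum_{k}|(W_k^{(m)})_i|^2$, a symmetric polynomial in $\lambda$ which, by direct inspection of the Newton identities, is bounded by $C\prod_{k\ne i}(\lambda_k^2+\text{const})$; multiplying over $i$ and regrouping yields the bound $C_m\prod_{i<j}(\lambda_i^2+\lambda_j^2)$.

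The genuinely delicate step is (iii). The plan is to write
\[
Q_\delta A-A^\ast Q_\delta=\sum_\rho\bigl[P_\rho^\ast P_\rho A-A^\ast P_\rho^\ast P_\rho\bigr]=\sum_\rho P_\rho^\ast\bigl(P_\rho A P_\rho^{-1}-(P_\rho A P_\rho^{-1})^\ast\bigr)P_\rho
\]
and to exploit the crucial algebraic fact that $P^{(m)}(\lambda_\rho)A(\lambda)P^{(m)}(\lambda_\rho)^{-1}$ is, up to a single off-diagonal correction row, the diagonal matrix $\mathrm{diag}(\lambda_{\rho_1},\dots,\lambda_{\rho_m})$; this follows because the rows of $P(\lambda_\rho)$ are precisely the left eigenvectors of $A(\lambda)$ associated with shifted characteristic polynomials, so the Sylvester structure forces all but a controlled remainder to diagonalise. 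Conjugating this by $H_\delta$ turns the off-diagonal correction into something of order $\delta$ (because $H_\delta A H_\delta^{-1}$ converts super-diagonal $1$'s into $\delta$'s and sub-diagonal entries into $\delta^{-1}$-corrections that are exactly cancelled by the correction row), yielding $|(Q_\delta A-A^\ast Q_\delta)v|\le C_m(\lambda)\delta\sum_\rho|P_\rho v|^2=C_m(\lambda)\delta(Q_\delta v,v)$. The hard part is tracking this cancellation cleanly; the cleanest route is to verify the identity first for $m=2$ by direct calculation, then inductively using (iv), where the $(m-1)$-level estimate transports to the top-left block of the $m$-level commutator and the contribution from $Q_0^{(m)}$ is handled by the explicit formula (v) together with the observation that $\mathcal{W}^{(m)}(\lambda)A(\lambda)=\mathrm{diag}(\lambda_1,\dots,\lambda_m)\mathcal{W}^{(m)}(\lambda)$, which is the symmetriser identity at $\delta=0$.
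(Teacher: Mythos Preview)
The paper does not actually prove this proposition: immediately before the statement it says ``For a detailed proof we refer the reader to Propositions 1 and 2 in \cite{KS} and to Proposition 1 in \cite{dASpa:98}.'' So there is nothing in the paper to compare against beyond that citation. Your outline is, in broad strokes, the argument of Kinoshita--Spagnolo: the block decomposition of $P_\delta^{(m)}$ giving (iv) and (v), induction for (i), the Vandermonde computation for (vi), the coercivity via $H_\delta^2\ge\delta^{2(m-1)}I$ and unit-triangularity of $P^{(m)}$ for (ii), and the triangularisation $P^{(m)}(\lambda)A(\lambda)P^{(m)}(\lambda)^{-1}$ together with the scaling by $H_\delta$ for (iii). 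On those points your sketch is faithful to the literature.

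Two places deserve tightening. First, in (iii) your description ``up to a single off-diagonal correction row'' is not quite the right picture: what one actually gets is that $P^{(m)}(\lambda)A(\lambda)P^{(m)}(\lambda)^{-1}$ is \emph{upper triangular} with $\lambda_1,\dots,\lambda_m$ on the diagonal and $1$'s on the first superdiagonal (check $m=2$: the conjugate is $\bigl(\begin{smallmatrix}\lambda_1&1\\0&\lambda_2\end{smallmatrix}\bigr)$). Conjugation by $H_\delta$ then multiplies the $(i,j)$ entry by $\delta^{j-i}$, turning all strictly upper-triangular entries into $O(\delta)$, which is the mechanism you want; the ``cancellation'' language is misleading. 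Second, your argument for (vii) is a genuine gap: invoking ``Newton identities'' and a bound of the form $C\prod_{k\ne i}(\lambda_k^2+\text{const})$ does not lead to $\prod_{i<j}(\lambda_i^2+\lambda_j^2)$. The actual proof in \cite{KS} computes $q_{0,ii}^{(m)}=(m-1)!\sum_k|\sigma_{m-i}^{(m-1)}(\pi_k\lambda)|^2$ explicitly from (v) and then uses a homogeneity and combinatorial argument on elementary symmetric functions to extract the pairwise factors; your sketch does not supply that step.
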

We finally recall the definition of \emph{nearly diagonal} matrices.
\begin{definition}
A family $\{Q_\alpha\}$ of nonnegative Hermitian matrices is called \emph{nearly diagonal} if there exists a positive constant $c_0$ such that
\[
Q_\alpha\ge c_0\,{\rm diag}\,Q_\alpha
\]
for all $\alpha$, with $${\rm diag}\,Q_\alpha ={\rm diag}\{q_{\alpha,11},\,\dots,\,q_{\alpha, mm}\}.$$
\end{definition}

The following linear algebra result is proven in \cite[Lemma 1]{KS}.
\begin{lemma}
\label{lem_old}
Let $\{Q_\alpha\}$ be a family of nonnegative Hermitian $m\times m$ matrices such that $\det Q_\alpha>0$ and
\[
\det Q_\alpha \ge c\, q_{\alpha,11}q_{\alpha,22}\cdots q_{\alpha,mm}
\]
for a certain constant $c>0$ independent of $\alpha$. Then,
\[
Q_\alpha\ge c\, m^{1-m}\,{\rm diag}\,Q_\alpha
\]
for all $\alpha$, i.e., the family $\{Q_\alpha\}$ is nearly diagonal.
\end{lemma}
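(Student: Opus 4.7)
The plan is to reduce to the case of a positive definite matrix whose diagonal is the identity, and then use the trace and determinant constraints to bound the smallest eigenvalue from below.

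First I would observe that the hypotheses force $Q_\alpha$ to be (strictly) positive definite, so in particular each diagonal entry $q_{\alpha,ii}$ is strictly positive. This lets me set $D_\alpha := \mathrm{diag}\, Q_\alpha$ and introduce the normalisation
\[
\tilde{Q}_\alpha := D_\alpha^{-1/2} Q_\alpha D_\alpha^{-1/2},
\]
which is again Hermitian positive definite, has all ones on the diagonal, and satisfies $\det \tilde{Q}_\alpha = \det Q_\alpha / (q_{\alpha,11}\cdots q_{\alpha,mm}) \ge c$ by hypothesis.

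Next I would exploit the trace and determinant of $\tilde{Q}_\alpha$. Let $\mu_1 \le \mu_2 \le \cdots \le \mu_m$ denote its eigenvalues. Since the trace equals $m$ and all eigenvalues are positive, each $\mu_i \le m$; and since $\mu_1 \mu_2 \cdots \mu_m = \det \tilde{Q}_\alpha \ge c$, the smallest eigenvalue satisfies
\[
\mu_1 \;\ge\; \frac{c}{\mu_2\mu_3\cdots\mu_m} \;\ge\; \frac{c}{m^{m-1}} \;=\; c\, m^{1-m}.
\]
Hence $\tilde{Q}_\alpha \ge c\, m^{1-m} I$ as Hermitian operators, and conjugating back by $D_\alpha^{1/2}$ yields exactly $Q_\alpha \ge c\, m^{1-m}\, \mathrm{diag}\, Q_\alpha$, as required. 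Since every inequality is uniform in $\alpha$, the family is nearly diagonal.

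There is no real obstacle here: the only step that requires a small check is the positivity of the diagonal entries (which follows from positive definiteness of $Q_\alpha$, guaranteed by $\det Q_\alpha > 0$ together with non-negativity), and the trivial bound $\mu_i \le \mathrm{tr}\,\tilde{Q}_\alpha = m$ used to convert the determinant lower bound into an eigenvalue lower bound. Everything else is formal manipulation of Hermitian positive definite matrices.
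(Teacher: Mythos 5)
Your argument is correct. The paper itself does not reproduce a proof of this lemma (it simply cites Lemma 1 of Kinoshita--Spagnolo), but the normalisation $\tilde{Q}_\alpha = D_\alpha^{-1/2} Q_\alpha D_\alpha^{-1/2}$ to unit diagonal, followed by the trace bound $\mu_i \le \operatorname{tr}\tilde{Q}_\alpha = m$ and the determinant lower bound to control the smallest eigenvalue, is exactly the standard argument used in that reference; every step in your write-up (positivity of the diagonal, $\det\tilde{Q}_\alpha\ge c$, $\mu_1\ge c/m^{m-1}$, and the conjugation back by $D_\alpha^{1/2}$) is sound and uniform in $\alpha$.
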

Lemma \ref{lem_old} is employed to prove that the family  $Q_\delta^{(m)}(\lambda)$ of quasi-sym\-me\-tri\-sers defined above is nearly diagonal when $\lambda$ belongs to a suitable set. The following statement is proven in \cite[Proposition 3]{KS}.
\begin{proposition}
\label{prop_SM}
For any $M>0$ define the set
\[
\mathcal{S}_M=\{\lambda\in\R^m:\, \lambda_i^2+\lambda_j^2\le M (\lambda_i-\lambda_j)^2,\quad 1\le i<j\le m\}.
\]
Then the family of matrices $\{Q_\delta^{(m)}(\lambda):\, 0<\delta\le 1, \lambda\in\mathcal{S}_M\}$ is nearly diagonal.
\end{proposition}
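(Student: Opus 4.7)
The plan is to apply Lemma \ref{lem_old} to the family $\{Q_\delta^{(m)}(\lambda):\,0<\delta\le 1,\ \lambda\in\mathcal{S}_M\}$. Concretely, it will suffice to exhibit a constant $c>0$ independent of $(\delta,\lambda)$ such that
\[
\det Q_\delta^{(m)}(\lambda)\ \ge\ c\, q_{\delta,11}^{(m)}(\lambda)\cdots q_{\delta,mm}^{(m)}(\lambda);
\]
the conclusion of Lemma \ref{lem_old} then yields $Q_\delta^{(m)}(\lambda)\ge c\,m^{1-m}\,\mathrm{diag}\,Q_\delta^{(m)}(\lambda)$ uniformly in $(\delta,\lambda)$, which is exactly the near-diagonal property.

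First I would dispose of the boundary case $\delta=0$. Combining the exact formula of Proposition \ref{prop_qs}(vi), namely $\det Q_0^{(m)}(\lambda)=(m-1)!\prod_{i<j}(\lambda_i-\lambda_j)^2$, with the bound of Proposition \ref{prop_qs}(vii), $\prod_j q_{0,jj}^{(m)}(\lambda)\le C_m\prod_{i<j}(\lambda_i^2+\lambda_j^2)$, and with the defining inequality of $\mathcal{S}_M$ multiplied over all $\binom{m}{2}$ pairs, one immediately obtains
\[
\prod_j q_{0,jj}^{(m)}(\lambda)\ \le\ \frac{C_m\, M^{\binom{m}{2}}}{(m-1)!}\,\det Q_0^{(m)}(\lambda)\qquad\text{for every }\lambda\in\mathcal{S}_M.
\]

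To propagate this to $\delta\in(0,1]$ I would exploit the joint scaling identity $Q_\delta^{(m)}(s\lambda)=H_s^{(m)}Q_{\delta/s}^{(m)}(\lambda)H_s^{(m)}$, which is a direct consequence of $P_\delta^{(m)}=H_\delta^{(m)}P^{(m)}$ and of the quasi-homogeneity in $\lambda$ of the entries of $P^{(m)}$. Under $(\delta,\lambda)\mapsto(s\delta,s\lambda)$ both $\det Q_\delta^{(m)}(\lambda)$ and $\prod_j q_{\delta,jj}^{(m)}(\lambda)$ scale by the same factor $s^{m(m-1)}$, so the ratio is scale invariant; since $\mathcal{S}_M$ is a cone it therefore suffices to control the ratio on the compact set $K=\{(\delta,\lambda):\delta^2+|\lambda|^2=1,\,\delta\ge 0,\,\lambda\in\mathcal{S}_M\}$. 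On $\{\delta>0\}\cap K$ the matrix $Q_\delta^{(m)}(\lambda)$ is strictly positive definite by Proposition \ref{prop_qs}(ii), so the ratio is a positive continuous function, while the base step already handles the face $\{\delta=0\}\cap K$ wherever $\prod_j q_{0,jj}^{(m)}(\lambda)>0$.

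The delicate point, and the step I expect to be the main obstacle, is the uniform lower bound on $K$ near the boundary $\delta=0$ at those $\lambda\in\mathcal{S}_M$ for which $\det Q_0^{(m)}(\lambda)$ and $\prod_j q_{0,jj}^{(m)}(\lambda)$ vanish simultaneously. This can happen on $\mathcal{S}_M$ only when $\lambda$ has several vanishing coordinates (two coincident nonzero entries being forbidden by the defining inequality), and it produces a genuine $0/0$ indeterminate form in the ratio as $\delta\to 0^{+}$. To resolve it I plan to induct on $m$, using the recursive decomposition of Proposition \ref{prop_qs}(iv),
\[
Q_\delta^{(m)}(\lambda)=Q_0^{(m)}(\lambda)+\delta^2\sum_{i=1}^{m}Q_\delta^{(m-1)}(\pi_i\lambda)^\sharp,
\]
together with the observation that $\pi_i\lambda$ remains in the $(m-1)$-dimensional analogue of $\mathcal{S}_M$ whenever $\lambda\in\mathcal{S}_M$. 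The inductive hypothesis then supplies matching lower bounds for the $\delta^2$-contribution both to $\det Q_\delta^{(m)}(\lambda)$ (via a Laplace expansion along the rows that are singular at $\delta=0$) and to $\prod_j q_{\delta,jj}^{(m)}(\lambda)$, so that the ratio remains bounded below by a positive constant depending only on $m$ and $M$. Combined with the base step and the scaling reduction, this completes the verification of the hypothesis of Lemma \ref{lem_old}.
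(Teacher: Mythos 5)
Your base step is correct: combining Proposition~\ref{prop_qs}(vi)--(vii) with the defining inequality of $\mathcal{S}_M$ does give $\prod_j q_{0,jj}^{(m)}(\lambda)\le C_m M^{\binom{m}{2}}((m-1)!)^{-1}\det Q_0^{(m)}(\lambda)$, and the scaling identity $Q_\delta^{(m)}(s\lambda)=H_s^{(m)}Q_{\delta/s}^{(m)}(\lambda)H_s^{(m)}$ (which follows from $P^{(m)}(s\lambda)=(H_s^{(m)})^{-1}P^{(m)}(\lambda)H_s^{(m)}$) is also correct. The genuine gap, which you flag yourself, is the $0/0$ indeterminacy as $\delta\to0^+$ at points of $\mathcal{S}_M$ with several vanishing coordinates. ``A Laplace expansion along the rows that are singular at $\delta=0$'' is not an argument: the determinant of a sum $Q_0^{(m)}(\lambda)+\delta^2R$ of positive semidefinite matrices is not controlled from below by the determinants of the summands in the direction you need (Minkowski's determinant inequality only gives $\det(Q_0^{(m)}+\delta^2R)\ge\det Q_0^{(m)}$, which vanishes exactly at the points in question), and establishing that $\det Q_\delta^{(m)}(\lambda)$ and $\prod_j q_{\delta,jj}^{(m)}(\lambda)$ vanish at matching rates as $\delta\to 0$ along arbitrary sequences in $\mathcal{S}_M$ is a nontrivial asymptotic analysis that you have neither carried out nor reduced to anything verifiable. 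The root of the difficulty is your decision to verify the hypothesis of Lemma~\ref{lem_old} directly for the two-parameter family $\{Q_\delta^{(m)}\}$; that couples the $\delta$- and $\lambda$-dependence in a way that the statement does not actually require.

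The $\delta$-dependence can be decoupled entirely, and then no determinant of $Q_\delta^{(m)}$, no compactness, and no scaling are needed. Two elementary observations suffice: if $A,B\ge0$ are Hermitian with $A\ge c_A\,\mathrm{diag}\,A$ and $B\ge c_B\,\mathrm{diag}\,B$ then $A+B\ge\min(c_A,c_B)\,\mathrm{diag}(A+B)$; and $T\mapsto T^\sharp$ preserves near-diagonality with the same constant. By Proposition~\ref{prop_qs}(iv), $Q_\delta^{(m)}(\lambda)=Q_0^{(m)}(\lambda)+\delta^2\sum_{i=1}^{m}Q_\delta^{(m-1)}(\pi_i\lambda)^\sharp$, and $\pi_i\lambda$ lies in the $(m-1)$-dimensional version of $\mathcal{S}_M$; hence, by induction on $m$, it is enough to show that $Q_0^{(m)}(\lambda)$ alone is nearly diagonal on $\mathcal{S}_M$ with a constant depending only on $m$ and $M$. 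That is exactly what Lemma~\ref{lem_old} delivers from your base-step computation on the set of $\lambda\in\mathcal{S}_M$ with pairwise distinct entries (where $\det Q_0^{(m)}>0$ by (vi)), and the inequality $Q_0^{(m)}\ge c_0\,\mathrm{diag}\,Q_0^{(m)}$ passes to the remaining points of $\mathcal{S}_M$ since it is a closed condition and $\lambda\mapsto Q_0^{(m)}(\lambda)$ is continuous. You already have all the ingredients; the missing idea is that near-diagonality is stable under sums of nonnegative Hermitian matrices, which makes Proposition~\ref{prop_qs}(iv) do all the work in $\delta$.
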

Using this we get the following corollary.
\begin{corollary} \label{coroll_nearly_diagonal}
Under the Kinoshota-Spagnolo condition 
\begin{align*}
\label{LC}
&\exists M>0:\qquad 
\lambda_i(t,\xi)^2+\lambda_j(t,\xi)^2\le M(\lambda_i(t,\xi)-\lambda_j(t,\xi))^2,\\
&\text{for}\ 1\le i,j\le m, t\in[0,T],  \textrm{ for all }\xi,\nonumber
\end{align*}
on the roots of the equation, the quasi-symmetriser is nearly diagonal.
\end{corollary}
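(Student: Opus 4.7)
The plan is to derive Corollary \ref{coroll_nearly_diagonal} as a direct consequence of Proposition \ref{prop_SM}, which establishes near-diagonality of the family $\{Q_\delta^{(m)}(\lambda):\,0<\delta\le 1,\ \lambda\in\mathcal{S}_M\}$. The key observation is that the Kinoshita-Spagnolo hypothesis is, by definition, the statement that the vector of roots $\lambda(t,\xi)=(\lambda_1(t,\xi),\dots,\lambda_m(t,\xi))$ lies in the set $\mathcal{S}_M$ for every $t\in[0,T]$ and every $\xi$, with the same constant $M$. Thus all the heavy lifting (the linear algebra of Lemma \ref{lem_old} combined with the explicit formulas of Proposition \ref{prop_qs}(vi)-(vii) for $\det Q_0^{(m)}(\lambda)$ and for the product $q_{0,11}\cdots q_{0,mm}$) is already packaged into Proposition \ref{prop_SM}.

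The proof would consist of two short steps. First, I would rewrite the Kinoshita-Spagnolo inequality as $\lambda(t,\xi)\in\mathcal{S}_M$ and note that, since the condition is uniform in $t\in[0,T]$ and $\xi\in\R^n$, the set of parameter values effectively realised by the equation is contained in the bigger parameter set treated by Proposition \ref{prop_SM}. Second, I would apply Proposition \ref{prop_SM} to conclude that there exists a constant $c_0>0$, independent of $t$, $\xi$ and $\delta\in(0,1]$, such that
\[
Q_\delta^{(m)}(\lambda(t,\xi))\ \ge\ c_0\,\mathrm{diag}\,Q_\delta^{(m)}(\lambda(t,\xi)),
\]
which is exactly the near-diagonality required in the corollary.

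There is essentially no obstacle here; the only point requiring attention is matching quantifiers and making explicit that the constant $c_0$ delivered by Proposition \ref{prop_SM} does not depend on the extra parameters $(t,\xi)$ through which the roots are now allowed to vary. This is automatic because Proposition \ref{prop_SM} treats $\lambda$ as a free point of $\mathcal{S}_M$ and produces a single $c_0$ valid for all such $\lambda$ and all $\delta\in(0,1]$. No further computation is needed, and no use of the specific Sylvester structure beyond what is already embedded in $Q_\delta^{(m)}$ is required.
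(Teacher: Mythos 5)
Your proposal is correct and coincides with the paper's intended argument: the Kinoshita--Spagnolo hypothesis is precisely the statement that $\lambda(t,\xi)\in\mathcal{S}_M$ uniformly in $(t,\xi)$, and Proposition \ref{prop_SM} supplies a single constant $c_0$ valid over all of $\mathcal{S}_M$ and all $\delta\in(0,1]$, so near-diagonality follows immediately. (Indeed, the paper presents the corollary as a one-line consequence, introduced by ``Using this we get the following corollary.'')
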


We conclude this appendix with a result on nearly diagonal matrices depending on three parameters (i.e. $\delta, t, \xi$)
which will is crucial when dealing with regularised coefficients. Note that this is a straightforward
extension of Lemma 2 in \cite{KS} valid for two parameter
(i.e. $\delta, t$) dependent matrices.
\begin{lemma}
\label{lem_new}
Let $\{ Q^{(m)}_\delta(t,\xi): 0<\delta\le 1, 0\le t\le T, \xi\in\R^n\}$ be a nearly diagonal family of coercive Hermitian matrices of class ${C}^k$ in $t$, $k\ge 1$. Then, there exists a constant $C_T>0$ such that for any non-zero
continuous function $V:[0,T]\times\R^n\to \C^m$
we have
\[
\int_{0}^T \frac{|(\partial_t Q^{(m)}_\delta(t,\xi)
V(t,\xi),V(t,\xi))|}{(Q^{(m)}_\delta(t,\xi)V(t,\xi),V(t,\xi))^{1-1/k}
|V(t,\xi)|^{2/k}}\, dt\le C_T
\Vert Q^{(m)}_\delta(\cdot,\xi)\Vert^{1/k}_{{C}^k([0,T])}
\]
for all $\xi\in\Rn.$
\end{lemma}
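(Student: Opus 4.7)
The plan is to adapt the proof of Lemma 2 in \cite{KS} essentially verbatim, treating $\xi\in\R^n$ as a mute parameter carried along throughout the argument. Concretely, I fix $\xi\in\R^n$ and consider the $C^k$-path $t\mapsto Q(t):=Q^{(m)}_\delta(t,\xi)$ of nearly diagonal, nonnegative Hermitian matrices; the ambient $\xi$ will appear only through the $C^k([0,T])$-norm on the right-hand side and through the values of $V$ in the integrand. What needs to be verified is that all estimates produced are uniform in $(\xi,\delta)$: this is automatic because the near-diagonality constant $c_0$ is uniform in these parameters by hypothesis, and the analytic constants entering the argument (Glaeser-type constants, dimension factors) are structural.

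The first step is to exploit near-diagonality to bound the denominator from below by the diagonal part,
\[
(Q(t)V(t,\xi),V(t,\xi))\ge c_0\sum_{i=1}^m q_{ii}(t,\xi)\,|V_i(t,\xi)|^2.
\]
The key scalar ingredient is then the Glaeser-type inequality: for any nonnegative $f\in C^k([0,T])$,
\[
|f'(t)|^k\le c_k\,\|f\|_{C^k([0,T])}^{k-1}\,f(t),\qquad t\in[0,T],
\]
from which one readily deduces $\int_0^T |f'(t)|\,f(t)^{-1+1/k}\,dt\le C_T\|f\|_{C^k([0,T])}^{1/k}$. Applied to each diagonal entry $q_{ii}(\cdot,\xi)$ and summed over $i$, this yields a bound of the desired form, with the $C^k$-norm of $Q(\cdot,\xi)$ on the right.

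The remaining work, which I expect to be the main technical obstacle, is to control the off-diagonal contributions to $(\partial_t Q V,V)$. Writing $(\partial_t Q V,V)=\sum_{i,j}\partial_t q_{ij}\,V_j\overline{V_i}$, one uses the Cauchy--Schwarz-type bound $|q_{ij}|^2\le q_{ii}\,q_{jj}$, valid because $Q$ is nonnegative Hermitian, together with a Glaeser-type estimate applied to the nonnegative scalar functions $t\mapsto(Q(t)w,w)$ for suitable fixed vectors $w$ (e.g.\ $w=e_i+z e_j$ with $z\in\C$), which transfers bounds on the diagonal of $\partial_t Q$ to the off-diagonal entries. These steps reduce the off-diagonal estimate to the diagonal case already handled. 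Since the parameter $\xi$ never interferes with the argument, the constant $C_T$ depends only on $T$, $c_0$, the dimension $m$ and the universal Glaeser constant, and the conclusion holds uniformly in $\xi\in\R^n$ and $\delta\in(0,1]$, as required.
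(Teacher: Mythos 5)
Your overall plan --- adapt the proof of Lemma~2 of Kinoshita--Spagnolo verbatim, carrying $\xi$ along as a mute parameter and noting that the near-diagonality constant is uniform in $(\delta,\xi)$ --- is precisely what the paper does: the appendix offers no proof of this lemma, only the remark that it is a ``straightforward extension'' of the two-parameter result in \cite{KS} to three parameters. On that level you and the paper agree.

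The problem is in your stated ``key scalar ingredient''. The pointwise inequality $|f'(t)|^k\le c_k\,\|f\|_{C^k([0,T])}^{k-1}\,f(t)$ for nonnegative $f\in C^k([0,T])$ is false: take $f(t)=t$ on $[0,1]$, for which $|f'(t)|^k\equiv 1$ and $\|f\|_{C^k}=1$ while the right-hand side tends to $0$ as $t\to 0^+$. Moreover, even where a Glaeser-type bound does hold, the integral estimate does not ``readily'' follow from it when $k\ge 3$: the pointwise bound gives $|f'(t)|\,f(t)^{-(1-1/k)}\lesssim\|f\|_{C^k}^{(k-1)/k}f(t)^{2/k-1}$, and the exponent $2/k-1$ is negative for $k\ge 3$, so the right-hand side is not controlled where $f$ is small. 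What Kinoshita--Spagnolo actually use is only the \emph{integrated} statement
\[
\int_0^T\frac{|f'(t)|}{f(t)^{1-1/k}}\,dt\le C(T,k)\,\|f\|_{C^k([0,T])}^{1/k},
\]
equivalently a bound on the total variation of $f^{1/k}$; this is a separate and genuinely harder theorem of Colombini--Jannelli--Spagnolo/Tarama type, proved by decomposing $[0,T]$ according to the size of $f$ and its derivatives, and it is not a corollary of any pointwise Glaeser inequality. In the example $f(t)=t$ the integral equals $k$, finite, even though the pointwise estimate blows up at $t=0$. The remainder of your sketch --- reduction to the diagonal via near-diagonality, and control of $\partial_t q_{ij}$ via Cauchy--Schwarz together with the auxiliary nonnegative forms $(Q(t)(e_i+ze_j),e_i+ze_j)$ --- is in the right spirit and consistent with the argument in \cite{KS}. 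To repair the proposal, delete the false pointwise estimate and invoke the scalar integral lemma directly as a black box.
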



\begin{thebibliography}{CDGS79}

\bibitem[ART19]{ART:19}
 {A.  Altybay, M. Ruzhansky and N. Tokmagambetov,}
Wave equation with distributional propagation speed and mass term: numerical simulations. 
\newblock {\em Appl. Math. E-Notes}, 19, 552--562, 2019.
 
\bibitem[ARST20]{ARST:20}
 {A.  Altybay, M. Ruzhansky, M. E. Sebih  and N. Tokmagambetov,}
The heat equation with singular potential.
\newblock {\em https://arxiv.org/abs/2004.11255}, 2020.
 
\bibitem[ARST20b]{ARST:20b}
{A.  Altybay, M. Ruzhansky, M. E. Sebih  and N. Tokmagambetov,}
Fractional Schr\"odinger equations with potential of higher order singularities.
\newblock {\em https://arxiv.org/abs/2004.10182}, 2020.
 
\bibitem[ARST20c]{ARST:20c}
{A.  Altybay, M. Ruzhansky, M. E. Sebih  and N. Tokmagambetov,}
Fractional Klein-Gordon equation with singular mass term.
\newblock {\em https://arxiv.org/abs/2004.10145}, 2020.



\bibitem[BB09]{BenBou:09}
K.~Benmeriem and C.~Bouzar.
\newblock Generalized {G}evrey ultradistributions.
\newblock {\em New York J. Math.}, 15:37--72, 2009.





\bibitem[CK]{ColKi:02}
{F.~Colombini and T.~Kinoshita.}
\newblock On the Gevrey well posedness of the Cauchy problem for weakly hyperbolic equations of higher order.
\newblock {\em J. Diff.  Eq.}, 186:394--419, 2002.


\bibitem[CS]{CS}
{F. Colombini and S. Spagnolo}.
An example of a weakly hyperbolic Cauchy problem not well posed in $C^\infty$.
{\em Acta Math.}, 148:243--253, 1982.

\bibitem[CDS]{CDS}
{F. Colombini, E. De Giorgi and S. Spagnolo}.
Sur les \'equations hyperboliques avec des coefficients qui ne d\'ependent que du temps.
{\em Ann. Scuola Norm. Sup. Pisa Cl. Sci.}, 6:511--559, 1979.



\bibitem[dAS98]{dASpa:98}
{P. d'Ancona and S. Spagnolo}.
Quasi-Symmetrization of Hyperbolic Systems
and Propagation of the Analytic Regularity.
{\em Bollettino U.M.I.}, 8(1):169--185, 1998.



\bibitem[DHO13]{DHO13}
H. Deguchi and  G. Hoermann and M. Oberguggenberger.
\newblock The wave equation with a discontinuous coefficient depending on time only: generalized solutions and propagation of singularities.
\newblock{\em Oper. Theory Adv. Appl.}, 231:323–-339, 2013.
 
 
\bibitem[G20]{G:20}
 {C. Garetto.}
 On the wave equation with multiplicities and space-dependent irregular coefficients. 
 \newblock {\em https://arxiv.org/abs/2004.09657}, 2020. To appear in {\em Trans. Amer. Math. Soc.}
 
\bibitem[GJ17]{GarJ}
{C. Garetto and C. J\"ah}
\newblock Well-posedness of hyperbolic systems with multiplicities and smooth coefficients
\newblock{\em Math. Ann}, 369(1-2):441--485, 2017.
 
\bibitem[GR12]{GR:11}
C.~Garetto and M.~Ruzhansky.
\newblock On the well-posedness of weakly hyperbolic equations with
  time-dependent coefficients.
\newblock {\em J. Differential Equations}, 253(5):1317--1340, 2012.

\bibitem[GR13]{GR:12}
C.~Garetto and M.~Ruzhansky.
\newblock Weakly hyperbolic equations with non-analytic coefficients and lower
  order terms.
\newblock {\em Math. Ann.}, 357(2):401--440, 2013.

\bibitem[GR14]{GR:14}
C.~Garetto and M.~Ruzhansky.
\newblock Hyperbolic  second order equations with non-regular time dependent coefficients
\newblock {\em Arch. Ration. Mech. Anal.}, 217(1):113--154, 2015. 

\bibitem[GR14b]{GarRuz:3}
{C.~Garetto and M.~Ruzhansky.}
\newblock A note on weakly hyperbolic equations with analytic principal part
\newblock {\em J. Math. Anal. Appl.},  412(1):1--14, 2014.

\bibitem[GR17]{GarRuz:7}
{C.~Garetto and M.~Ruzhansky.}
\newblock $C^\infty$ well-posedness of hyperbolic systems with multiplicities
\newblock {\em Ann. Mat. Pura  Appl}, 196(5):1819--1834, 2017.

\bibitem[GJR18]{GJR1}
{C. Garetto, C. J\"ah and M. Ruzhansky.}
\newblock Hyperbolic systems with non-diagonalisable principal part and
variable multiplicities, {I}: well-posedness. 
\newblock {\em Math. Annalen}, 372(3-4):1597-1629, 2018. 


\bibitem[GJR20]{GJR2}
{C. Garetto, C. J\"ah and M. Ruzhansky.}
Hyperbolic systems with non-diagonalisable principal part and variable multiplicities, {II}: microlocal analysis. 
\newblock{J. Differential Equations}, 269(10): 7881-7905, 2020.



\bibitem[Hoer:71]{Hoer:71}
{L.~H\"{o}rmander.}
\newblock Fourier integral operators I.
\newblock {\em  Acta Math.}, 127(1-2):79--183, 1971.

\bibitem[Hor93]{Hord} 
{L. H\"ormander}.
Hyperbolic systems with double characteristics, 
{\em Comm. Pure Appl. Math.}, 46:261--301, 1993.

 






\bibitem[JT]{JT}
E. Jannelli and G. Taglialatela.
\newblock Homogeneous weakly hyperbolic equations with time dependent analytic coefficients.
\newblock{\em J. Differential Equations}, 251:995--1029, 2011.

\bibitem[KY]{KY:06}
{K. Kajitani and Y. Yuzawa.}
\newblock  The Cauchy problem for hyperbolic systems with H\"older continuous coefficients with respect to the time variable.
{\em Ann. Sc. Norm. Super. Pisa Cl. Sci.}, {5(4)}:465--482, 2006.

\bibitem[KR]{KR2}
{I. Kamotski and M. Ruzhansky}.
Regularity properties, representation of solutions and 
spectral asymptotics of systems with multiplicities, 
{\em Comm. Partial Differential Equations}, {32}:1--35, 2007.


\bibitem[KS]{KS}
T. Kinoshita and S. Spagnolo,
\newblock Hyperbolic equations with non-analytic coefficients.
\newblock{\em Math. Ann.} 336:551--569, 2006.



 




\bibitem[NR10]{NiRo:10}
F.~Nicola and L.~Rodino.
\newblock {\em Global pseudo-differential calculus on {E}uclidean spaces},
  volume~4 of {\em Pseudo-Differential Operators. Theory and Applications}.
\newblock Birkh{\"a}user Verlag, Basel, 2010.


\bibitem[O70]{O70}
O. A. Oleinik.
\newblock On the Cauchy problem for weakly hyperbolic equations.
\newblock{\em  Comm. Pure Appl. Math.} 23:569--586, 1970.


\bibitem[PP]{PP}
{C. Parenti and A. Parmeggiani}.
On the Cauchy problem for hyperbolic operators with double characteristics. 
{\em Comm. Partial Diff. Eq.}, {34}:837--888, 2009.

\bibitem[RT]{RT}
{Ruzhansky M. and Tokmagambetov N.}
Wave equation for operators with discrete spectrum and irregular propagation speed.
{\em Arch. Ration. Mech. Anal.}, 226: 1161--1207, 2017.

\bibitem[RY]{RY}
{Ruzhansky M. and Yessirkegenov N.}
Very weak solutions to hypoelliptic wave equations. 
{\em J. Differential Equations}, 268 2063--2088, 2020.

\bibitem[Sch54]{Schwartz:impossibility-1954}
L.~Schwartz.
\newblock Sur l'impossibilit{\'e} de la multiplication des distributions.
\newblock {\em C. R. Acad. Sci. Paris}, 239:847--848, 1954.






\bibitem[T06]{T06}
N.~Teofanov.
\newblock Modulation spaces, {G}elfand-{S}hilov spaces and pseudodifferential
  operators.
\newblock {\em Sampl. Theory Signal Image Process.}, 5(2):225--242, 2006.


\bibitem[LV92]{LV92}
R. J. Le Veque.
Numerical methods for conservation laws.
Birkh\"auser, Basel, 1992.

\end{thebibliography}
\end{document}